\let\counterwithin\relax  
\definecolor{dark-gray}{gray}{0.3}
\definecolor{dkgray}{rgb}{.4,.4,.4}
\definecolor{dkblue}{rgb}{0,0,.5}
\definecolor{medblue}{rgb}{0,0,.75}
\definecolor{rust}{rgb}{0.5,0.1,0.1}
\newtheoremstyle{myThm} 
    {\topsep}                    
    {\topsep}                    
    {\itshape}                   
    {}                           
    {\sffamily\bfseries}                   
    {.}                          
    {.5em}                       
    {}  
\newtheoremstyle{myRem} 
    {\topsep}                    
    {\topsep}                    
    {}                   
    {}                           
    {\sffamily}                   
    {.}                          
    {.5em}                       
    {}  
\newtheoremstyle{myDef} 
    {\topsep}                    
    {\topsep}                    
    {}                   
    {}                           
    {\sffamily\bfseries}                   
    {.}                          
    {.5em}                       
    {}  
\theoremstyle{myThm}
\newtheorem{theorem}{Theorem}[section]
\newtheorem{lemma}[theorem]{Lemma}
\newtheorem{proposition}[theorem]{Proposition}
\theoremstyle{myRem}
\newtheorem{remark}[theorem]{Remark}
\theoremstyle{myDef}
 \newtheorem{example}[theorem]{Example}
\let\originalleft\left
\let\originalright\right
\renewcommand{\left}{\mathopen{}\mathclose\bgroup\originalleft}
\renewcommand{\right}{\aftergroup\egroup\originalright}
\renewcommand{\phi}{\varphi}
\providecommand{\mathbbm}{\mathbb} 
\newcommand{\R}{\mathbbm{R}}
\newcommand{\OO}{\mathcal{O}}
\definecolor{mygreen}{rgb}{0.1,0.75,0.2}
\newcommand{\nc}{\normalcolor}
\newcommand{\Expect}{\operatorname{\mathbb{E}}}
\newcommand{\Nc}{\mathcal{N}}
\newcommand{\stkout}[1]{\ifmmode\text{\sout{\ensuremath{#1}}}\else\sout{#1}\fi}
\title{Iterative Ensemble Kalman Methods: \\
 A Unified Perspective with Some New Variants} 
\author{Neil K. Chada$^\dagger$, Yuming Chen$^{*}$ and Daniel Sanz-Alonso$^{*}$}
\date{}
\makeatletter\@addtoreset{section}{part}\makeatother%
\numberwithin{equation}{section}
\newcommand{\upperRomannumeral}[1]{\uppercase\expandafter{\romannumeral#1}}
\newcommand{\J}{{\mathsf{J}}}
\newcommand{\Jtp}{\J_{\mbox {\tiny{\rm TP}}}}
\newcommand{\Jdm}{\J_{\mbox {\tiny{\rm DM}}}}
\newcommand{\Jl}{\J_i^{\mbox {\tiny{$\ell$}}}}
\newcommand{\Juc}{\J_i^{\mbox {\mbox{\tiny{UC} }  }}}
\newcommand{\Jtpl}{\J_{\mbox{\tiny{\rm TP,}}i}^{\ell} }
\newcommand{\Juctp} {\J_{\mbox {\tiny{\rm TP}},i}^{\mbox {\mbox{\tiny{UC} }  }}}
\newcommand{\Jucdm} {\J_{\mbox {\tiny{\rm DM}},i}^{\mbox {\mbox{\tiny{UC} }  }}}
\newcommand{\Jucdmn} {\J_{\mbox {\tiny{\rm DM}},i}^{(n),\mbox {\mbox{\tiny{UC} }  }}}
\newcommand{\Jtpn}{\J_{\mbox {\tiny{\rm TP}},i}^{(n)}}
\newcommand{\Juctpn} {\J_{\mbox {\tiny{\rm TP}},i}^{(n),\mbox {\mbox{\tiny{UC} }  }}}
\newcommand{\rtp}{r_{\mbox {\tiny{\rm TP}}}}
\newcommand{\rdm}{r_{\mbox {\tiny{\rm DM}}}}
\newcommand{\uin}{u_i^{(n)}}
\newcommand{\yin}{y_i^{(n)}}
\newcommand{\vin}{v_i^{(n)}}
\newcommand{\xiin}{\xi_i^{(n)}}
\newcommand{\zin}{z_i^{(n)}}
\newcommand{\Cf}{\mathfrak{C}}
\newcommand{\mf}{\mathfrak{m}}
\newcommand*\diff{\mathop{}\!\mathrm{d}}
\begin{document}
\maketitle 

\begin{abstract}
This paper provides a unified perspective of iterative ensemble Kalman methods, a family of derivative-free algorithms for parameter reconstruction and other related tasks. We identify, compare and develop three subfamilies of ensemble methods that differ in the objective they seek to minimize and the derivative-based optimization scheme they approximate through the ensemble.  Our work  emphasizes two principles for the derivation and analysis of iterative ensemble Kalman methods: statistical linearization and continuum limits. Following these guiding principles, we introduce new iterative ensemble Kalman methods that show promising  numerical performance in Bayesian inverse problems, data assimilation and machine learning tasks. 
\let\thefootnote\relax\footnote{$^{\dagger}$ Applied Mathematics and Computational Science, King Abdullah University of Science and Technology.}
\let\thefootnote\relax\footnote{$^*$ Department of Statistics, University of Chicago.}
\end{abstract}

\section{Introduction}
This paper provides an accessible introduction to the derivation and foundations of iterative ensemble Kalman methods, a family of derivative-free algorithms for parameter reconstruction and other related tasks. The overarching theme behind these methods is to iteratively update via Kalman-type formulae an ensemble of candidate reconstructions,  aiming to bring the ensemble closer to the unknown parameter with each iteration. The ensemble Kalman updates approximate derivative-based nonlinear least-squares optimization schemes without requiring gradient evaluations. Our presentation emphasizes that iterative ensemble Kalman methods can be naturally classified in terms of the nonlinear least-squares objective they seek to minimize and the derivative-based optimization scheme they approximate through the ensemble. This perspective allows us to identify three subfamilies of iterative ensemble Kalman methods, creating unity into the growing  literature on this subject. Our work also emphasizes two principles for the derivation and analysis of iterative ensemble Kalman methods: statistical linearization and continuum limits. Following these principles we introduce new iterative ensemble Kalman methods that show promising numerical performance in Bayesian inverse problems, data assimilation and machine learning tasks.

\normalem

We consider the application of iterative ensemble Kalman methods to the problem of reconstructing an unknown $u\in \R^d$ from corrupt data $y\in \R^k$ related by 
\begin{equation}\label{IP}
y = h(u) + \eta,
\end{equation}
where $\eta$ represents measurement or model error and $h$ is a given map. A wide range of inverse problems, data assimilation and machine learning tasks can be cast into the framework \eqref{IP}. In these applications the unknown $u$ may  represent, for instance, an input parameter of a differential equation, the current state of a time-evolving signal and a  regressor, respectively. 
 Ensemble Kalman methods were first introduced as filtering schemes for sequential data assimilation \cite{GE09,EL96,MH12,RC13,SST19} to reduce the computational cost of the Kalman filter \cite{REK60}. Their use for state and parameter estimation and inverse problems was further developed in \cite{A01,LFFLNV01,NMV02,SE01}. The idea of \emph{iterating} these methods was considered in \cite{CO12,ER13}. Iterative ensemble Kalman methods are now popular in inverse problems and data assimilation; they have also shown some potential in machine learning applications \cite{HLR18,GSW20,KS19}. 

Starting from an initial ensemble $\{u_0^{(n)} \}_{n=1}^N,$ iterative ensemble Kalman methods use various ensemble-based empirical means and covariances to update 
\begin{equation}\label{eq:update}
\{u_i^{(n)} \}_{n=1}^N  \to \{u_{i+1}^{(n)}\}_{n=1}^N,
\end{equation}
until a stopping criteria is satisfied; the unknown parameter $u$ is reconstructed by the mean of the final ensemble. The idea is analogous to classical Kalman methods and optimization schemes which, starting with a \emph{single} initialization $u_0$ use evaluations of derivatives of $h$ to iteratively update $u_i \to u_{i+1}$ until a stopping criteria is met. The initial ensemble is viewed as an input to the algorithm, obtained in a problem-dependent fashion. In Bayesian inverse problems and machine learning it may be obtained by sampling a prior, while in data assimilation the initial ensemble may be a given collection of particles that approximates the prediction distribution. In either case, it is useful to view the initial ensemble as a sample from a probability distribution.

There are two main computational benefits in updating an \emph{ensemble} of candidate reconstructions rather than a single estimate. First, the ensemble update can be performed without evaluating derivatives of $h,$ effectively approximating them using \emph{statistical linearization}. This is important in applications where computing derivatives of $h$ is expensive, or where the map $h$ needs to be treated as a black-box. Second, the use of empirical rather than model covariances can significantly reduce the computational cost whenever the ensemble size $N$ is smaller than the dimension $d$ of the unknown parameter $u$. Another key advantage of the ensemble approach is that, for problems that are not strongly nonlinear, the spread of the ensemble may contain meaningful information on the uncertainty in the reconstruction.


\subsection{Overview: Three Subfamilies}
This paper identifies, compares and further develops three subfamilies of iterative ensemble Kalman methods to implement the ensemble update \eqref{eq:update}. Each subfamily employs a different Kalman-type formulae, determined by a choice of objective to minimize and a derivative-based optimization scheme to approximate with the ensemble. All three approaches impose some form of regularization, either explicitly through the choice of the objective, or implicitly through the choice of the optimization scheme. Incorporating regularization is essential in parameter reconstruction problems encountered in applications, which are typically under-determined or ill-posed \cite{LP11,SST19}.

The first subfamily considers a \emph{Tikhonov-Phillips}  objective associated with the parameter reconstruction problem \eqref{eq:update}, given by
\begin{equation}\label{eq:TPobjective}
\Jtp(u) := \frac12|y-h(u)|^2_R   +  \frac12|u-m|^2_P,
\end{equation}
where $R$ and $P$ are symmetric positive definite matrices that model, respectively, the data measurement precision and the level of regularization ---incorporated explicitly through the choice of objective--- and $m$ represents a background estimate of $u.$ Here and throughout this paper we use the notation $| v |^2_A := |A^{-1/2} v|^2 = v^T A^{-1} v$ for symmetric positive  definite $A$ and vector $v.$ The ensemble is used to approximate a Gauss-Newton method applied to the Tikhonov-Phillips objective $\Jtp.$ Algorithms in this subfamily were first introduced in geophysical data assimilation \cite{ANOR09,CO12,ER13,GO07,LR07,RRZL06} and were inspired by iterative, derivative-based, extended Kalman filters \cite{BMB94,BC93,AHJ07}. Extensions to more challenging problems with strongly nonlinear dynamics are considered in \cite{SOB12,SU12}. In this paper we will use a new Iterative Ensemble Kalman Filter (IEKF) method as a prototypical example of an algorithm that belongs to this subfamily.

The second subfamily considers the \emph{data-misfit} objective
 \begin{equation}\label{eq:objective}
\Jdm(u) :=  \frac12|y-h(u)|^2_R.
\end{equation}
When the parameter reconstruction problem is ill-posed, minimizing $\Jdm$ leads to unstable reconstructions. For this reason, iterative ensemble Kalman methods in this subfamily are complemented with a Levenberg-Marquardt  optimization scheme that implicitly incorporates regularization. The ensemble is used to approximate a regularizing Levenberg-Marquardt  optimization algorithm to minimize $\Jdm.$ Algorithms in this subfamily were introduced in the applied mathematics literature \cite{MAI16,ILS13} building on ideas from classical inverse problems \cite{MH97}. Recent theoretical work has focused on developing continuous-time and mean-field limits, as well as various convergence results \cite{BSWW19,BSW18,CT19,HV18,KS19,SS17}. Methodological extensions based on Bayesian hierarchical techniques were introduced in \cite{NKC18,CIRS18} and the incorporation of constraints has been investigated in \cite{ABL19,CSW19}. In this paper we will use the Ensemble Kalman Inversion (EKI) method \cite{ILS13} as a prototypical example of an algorithm  that belongs to this subfamily.

\begin{table}
\begin{center}
	\begin{tabular}{ | c | c |c|c|c|}
		\hline 
	   Objective  & Optimization & Derivative Method & Ensemble Method	 & New Variant \\ \hline
$\Jtp$ 	&  GN & IExKF  \eqref{ssec:GNTP}  & IEKF \eqref{ssec:IenKFSL} &  IEKF-SL \eqref{sec:IEKS}  \\ \hline
	$\Jdm$ 	& LM & LM-DM \eqref{ssec:LMDM}  &  EKI  \eqref{ssec:EKI}  & EKI-SL \eqref{sec:EKIN}  \\ \hline
	$\Jtp$ 	& LM  & LM-TP \eqref{ssec:LMTP}  & TEKI  \eqref{ssec:TEKI} & TEKI-SL  \eqref{ssec:gradientstructure}    \\ \hline
	\end{tabular}
	\bigskip
	\caption{Roadmap to the algorithms considered in this paper. We use the abbreviations GN and LM for Gauss-Newton and Levenberg-Marquardt. The numbers in parenthesis represent the subsection in which each algorithm is introduced.}		
		\label{AlgorithmsSummary}
		\end{center}
\end{table}

The third subfamily, which has emerged more recently, combines explicit regularization through the Tikhonov-Phillips objective and an implicitly regularizing optimization scheme \cite{CT19,CST20}. Precisely, a Levenberg-Marquardt scheme is approximated through the ensemble in order to minimize the Tikhonov-Phillips objective $\Jtp.$
 In this paper we will use the Tikhonov ensemble Kalman inversion method (TEKI)  \cite{CST20} as a prototypical example.
 
To conclude this overview we note that while in this paper we will only consider least-squares objectives, iterative ensemble Kalman methods that use other regularizers have been recently proposed \cite{KS19,L20,S20}. 

\subsection{Statistical Linearization, Continuum Limits and New Variants}
Each subfamily of iterative ensemble Kalman methods stems from a derivative-based optimization scheme. However, there is substantial freedom as to how to use the ensemble to approximate a derivative-based method. We will focus on randomized-maximum likelihood implementations \cite{GO07,KLS14,SST19}, rather than square-root or ensemble adjustment approaches \cite{A01,T03,G20}. Two principles will guide our derivation and analysis of ensemble methods: the use of statistical linearization  \cite{SU12} and their connection with gradient descent methods through the study of continuum limits \cite{SS17}.

The idea behind statistical linearization is to approximate the gradient of $h$ using  pairs $\big\{ \big(\uin, h(\uin) \big) \big\}_{n=1}^N$ in such a way that if $h$ is linear and the ensemble size $N$ is sufficiently large, the approximation is exact. As we shall see, this idea tacitly underlies the derivation of all the ensemble methods considered in this paper, and will be explicitly employed in our derivation of new variants. Statistical linearization has also been used within Unscented Kalman filters, see e.g. \cite{SU12}.

Differential equations have long been important in developing and understanding optimization schemes \cite{N83}, and investigating the connections between differential equations and optimization is still an active area of research \cite{S18,S14,S16,W16}. In the context of iterative ensemble methods, continuum limit analyses arise from considering small length-steps and have been developed primarily in the context of EKI-type algorithms \cite{BSWW19,SS18}. While the derivative-based algorithms that motivate the ensemble methods result in an ODE continuum limit, the ensemble versions lead to a system of SDEs. Continuum limit analyses are useful in at least three ways. First, they unveil the gradient structure of the optimization schemes.  Second, viewing optimization schemes as arising from discretizations of SDEs lends itself to design of algorithms that are easy to tune: the length-step is chosen to be small and the algorithms are run until statistical equilibrium is reached. Third, a simple linear-case analysis of the SDEs may be used to develop new algorithms that satisfy certain desirable properties. Our new iterative ensemble Kalman methods will be designed following these observations.

While our work advocates the study of continuum limits as a useful tool to design ensemble methods, continuum limits cannot fully capture the full richness and flexibility of discrete-based implementable algorithms, since different algorithms may result in the same SDE continuum limit. This insight suggests that it is not only the study of differential equations, but also their \emph{discretizations}, that may contribute to the design of iterative ensemble Kalman algorithms.

\subsection{Main Contributions and Outline}
In addition to providing a unified perspective of the existing literature, this paper contains several original contributions. We highlight some of them in the following outline and refer to Table \ref{AlgorithmsSummary} for a summary of the algorithms considered in this paper.
\begin{itemize}
	\item In Section \ref{sec:background} we review three iterative derivative-based methods for nonlinear least-squares optimization. The ensemble-based algorithms studied in subsequent sections can be interpreted as ensemble-based approximations of the derivative-based methods described in this section. We also derive informally ODE continuum limits for each method, which unveils their gradient flow structure. 
	\item In Section \ref{sec:EnsembleKalmanLearning} we describe the idea of statistical linearization. We review three subfamilies of iterative ensemble methods, each of which has an update formula analogous to one of the derivate-based methods in Section \ref{sec:background}. We analyze the methods when $h(u) = Hu$ is linear by formally deriving SDE continuum limits that unveil their gradient structure. A novelty in this section is the introduction of the IEKF method, which is similar to, but different from, the iterative ensemble method introduced in \cite{SU12}.  
	\item The material in Section \ref{sec:EnsembleKalmanLearningNew} is novel to the best of our knowledge. We introduce new variants of the iterative ensemble Kalman methods discussed in Section \ref{sec:EnsembleKalmanLearning} and formally derive their SDE continuum limit. We analyze the resulting SDEs when $h(u)=Hu$ is linear. The proposed methods are designed to ensure that (i) no parameter tuning or careful stopping criteria are needed; and (ii) the ensemble covariance contains meaningful information of the uncertainty in the reconstruction in the linear case, avoiding the ensemble collapse of some existing methods. 
	\item In Section \ref{sec:num} we include an in-depth empirical comparison of the performance of the iterative ensemble Kalman methods discussed in Sections \ref{sec:EnsembleKalmanLearning} and \ref{sec:EnsembleKalmanLearningNew}. We consider four problem settings motivated by applications in Bayesian inverse problems, data assimilation and machine learning. Our results illustrate the different behavior of some methods in small noise regimes and the benefits of avoiding ensemble collapse. 
	\item Section \ref{sec:conclusions} concludes and suggests some open directions for further research. 
\end{itemize}

\section{Derivative-based Optimization for Nonlinear Least-Squares}\label{sec:background}
In this section we review the Gauss-Newton and Levenberg-Marquardt methods, two derivative-based approaches for optimization of nonlinear least-squares objectives of the form
\begin{equation}\label{eq:obj}
	\J(u) = \frac12 |r(u)|^2.
\end{equation}
We derive closed formulae for the Gauss-Newton method applied to the Tikhonov-Phillips objective $\Jtp$, as well as for the Levenberg-Marquardt method  applied to the data-misfit objective $\Jdm$ and the Tikhonov-Phillips objective $\Jtp.$ These formulae are the basis for the ensemble, derivative-free methods considered in the next section. 

As we shall see, the search directions of Gauss-Newton and Levenberg-Marquardt methods are found by minimizing a linearization of the least-squares objective. It is thus instructive to consider first linear least-squares optimization before delving into the nonlinear setting.  The following well-known result, that we will use extensively, characterizes the minimizer $\mu$ of the Tikhonov-Phillips objective $\Jtp$ in the case of linear $h(u) = Hu$. 
\begin{lemma}\label{lemma:linear}
	It holds that 
	\begin{equation}\label{eq:linearTikhonov}
		\frac12| y - Hu|_R^2 + \frac12| u - m|^2_P 
		= \frac12 | u - \mu|_C^2 + \beta,
	\end{equation}
	where $\beta$ does not depend on $u,$ and 
	\begin{align}
		C^{-1} &= H^T R^{-1} H + P^{-1},  \label{eq:precision} \\ 
		C^{-1} \mu & = H^T R^{-1} y + P^{-1} m. \label{eq:precisionmean}
	\end{align}
	Equivalently, 
	\begin{align}
		\mu &= m + K(y - Hm), \label{eq:linearmean}\\ 
		C &= (I - K H)P,\label{eq:linearcovariance}
	\end{align}
	where $K$ is the Kalman gain matrix given by
	\begin{equation}\label{eq:kalmangain}
		K = P H^T (HPH^T + R)^{-1} =C H^T R^{-1}.
	\end{equation} 
\end{lemma}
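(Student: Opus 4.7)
The plan is to prove the lemma in two stages: first establish the precision-form identities \eqref{eq:precision}--\eqref{eq:precisionmean} by completing the square, and then convert to the covariance-form identities \eqref{eq:linearmean}--\eqref{eq:linearcovariance} via the Woodbury matrix identity.

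For the first stage, I would expand the left-hand side of \eqref{eq:linearTikhonov} using $|v|_A^2 = v^T A^{-1} v$:
\begin{equation*}
\tfrac12 u^T\bigl(H^T R^{-1} H + P^{-1}\bigr) u - u^T\bigl(H^T R^{-1} y + P^{-1} m\bigr) + \tfrac12 y^T R^{-1} y + \tfrac12 m^T P^{-1} m.
\end{equation*}
Matching to the right-hand side $\tfrac12 u^T C^{-1} u - u^T C^{-1}\mu + \tfrac12 \mu^T C^{-1}\mu + \beta$ and equating the quadratic and linear terms in $u$ immediately yields \eqref{eq:precision} and \eqref{eq:precisionmean}; the residual constant $\beta$ is determined by the remaining scalar terms, but its explicit form is not needed.

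For the second stage, the Woodbury identity applied to $C^{-1} = P^{-1} + H^T R^{-1} H$ gives
\begin{equation*}
C = P - P H^T (H P H^T + R)^{-1} H P = (I - K H) P,
\end{equation*}
which is \eqref{eq:linearcovariance}, with $K = P H^T (HPH^T + R)^{-1}$. To verify the alternative expression $K = C H^T R^{-1}$, I would multiply $(I - KH)P$ by $H^T R^{-1}$ and use the identity $P H^T = K(HPH^T + R)$ to cancel terms, obtaining $(I - KH) P H^T R^{-1} = K$.

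Finally, for the mean formula \eqref{eq:linearmean}, I would solve \eqref{eq:precisionmean} as $\mu = C H^T R^{-1} y + C P^{-1} m$. The first term equals $Ky$ by the identity just established, and for the second I would observe that $C P^{-1} = (I - KH) P P^{-1} = I - KH$, so $\mu = Ky + (I - KH) m = m + K(y - Hm)$. The only non-routine step is invoking the Woodbury identity in the right direction, but once it is in place every remaining identity is a one-line algebraic manipulation, so I expect no serious obstacle.
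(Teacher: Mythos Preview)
Your proposal is correct and follows essentially the same approach as the paper: match quadratic and linear coefficients in $u$ to obtain \eqref{eq:precision}--\eqref{eq:precisionmean}, then invoke the Woodbury (matrix inversion) identity to convert to the covariance form \eqref{eq:linearmean}--\eqref{eq:linearcovariance} and the Kalman gain expressions \eqref{eq:kalmangain}. The paper's proof is terser and simply cites the matrix inversion lemma for the second stage, whereas you spell out the intermediate manipulations explicitly, but the underlying argument is the same.
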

\begin{proof}
	The  formulae  \eqref{eq:precision} and \eqref{eq:precisionmean}  follows by matching linear and quadratic coefficients in $u$ between  
	\begin{equation}
		\frac{1}{2} |u-\mu|^2_C  \quad \quad \text{and}  \quad \quad \frac12|u-m|^2_P + \frac12|y-h(u)|^2_R. 
	\end{equation}
	The formulae  \eqref{eq:linearmean} and \eqref{eq:linearcovariance} as well as the equivalent expressions for the Kalman gain $K$ in Equation \eqref{eq:kalmangain} can be obtained using the matrix inversion lemma \cite{SST19}.
\end{proof}

\paragraph{Bayesian Interpretation}
	Lemma \ref{lemma:linear} has a natural statistical interpretation. Consider a statistical model defined by likelihood $y|u \sim \Nc\bigl( Hu, R  \bigr)$ and prior $u\sim \Nc(m,P).$ Then Equation \eqref{eq:linearTikhonov} shows that the posterior distribution is Gaussian, $u|y\sim \Nc(\mu,C),$ and Equations \eqref{eq:precision}-\eqref{eq:precisionmean} characterize the posterior mean and precision (inverse covariance). We interpret Equation \eqref{eq:linearmean} as providing a closed formula for the posterior \emph{mode}, known as the \emph{maximum a posteriori} (MAP) estimator.
	
More generally, the generative model 
\begin{align}\label{eq:probabilisticmodel}
\begin{split}
u &\sim \Nc(m,P),\\
y|u  &\sim \Nc(h(u), R),
\end{split}
\end{align}
gives rise to a posterior distribution on $u|y$ with density proportional to $\exp\bigl( -\Jtp(u) \bigr). $ Thus, minimization of $\Jtp(u)$ corresponds to maximizing the posterior density under the model \eqref{eq:probabilisticmodel}.

\subsection{Gauss-Newton Optimization of Tikhonov-Phillips Objective}\label{ssec:GNTP}
In this subsection we introduce two ways of writing the  Gauss-Newton update applied to the Tikhonov-Phillips objective $\Jtp$. We recall that the Gauss-Newton method applied to a general least-squares objective $\J(u) = \frac12 |r(u)|^2$ is a line-search method which, starting from an initialization $u_0,$ sets
\begin{align*}
	u_{i+1} = u_i + \alpha_i v_i, \quad \quad i  = 0, 1,\ldots
\end{align*}
where $v_i$ is a search direction defined by
\begin{align}\label{eq:definitionJl}
	v_i = \arg\min_v \Jl(v), \quad \quad \Jl(v):= \frac12|r'(u_i)v + r(u_i) |^2,
\end{align}
where $\alpha_i>0$ is a length-step parameter whose choice will be discussed later. 
In order to apply the Gauss-Newton method to the Tikhonov-Phillips objective, we write $\Jtp$  in the standard nonlinear least-squares form \eqref{eq:obj}. Note that
\begin{align*}
	\Jtp(u) &= \frac12|u-m|^2_P + \frac12|y-h(u)|^2_R \\
	&= \frac12|z-g(u)|^2_Q,
\end{align*}
where
$$z := \begin{bmatrix}
	y\\ m
\end{bmatrix},  \quad \quad \quad 
g(u) := \begin{bmatrix}
	h(u)\\ u
\end{bmatrix},
\quad \quad \quad
Q := 
\begin{bmatrix}
	R & 0 \\ 0 & P
\end{bmatrix}.
$$
Therefore we have
\begin{align}\label{eq:objectiveGN}
	\Jtp(u) = \frac12|\rtp(u)|^2, \quad \quad \rtp(u) :=Q^{-1/2}\bigl( z - g(u) \bigr).
\end{align}


The following result is a direct consequence of Lemma \ref{lemma:linear}. 
\begin{lemma}[\cite{BC93}]
	The Gauss-Newton method applied to the Tikhonov-Phillips objective $\Jtp$ admits the characterizations: 
	\begin{equation}
		u_{i+1}  = u_i +  \alpha_i C_i\Bigl\{ H_i^T R^{-1} \bigl( y - h(u_i) \bigr)  + P^{-1}(m - u_i)  \Bigr\}\label{GNTP1},
	\end{equation}
	and 
	\begin{equation}
		u_{i+1}  =  u_i  + \alpha_i  \Bigl\{K_i \bigl( y - h(u_i) \bigr) +  (I- K_i H_i) (m - u_i)   \Bigl\}, \label{GNTP2} 
	\end{equation}
 where $H_i = h'(u_i)$ and 
	\begin{align*}
		K_i &= P H_i^T (H_iPH_i^T + R)^{-1},  \\
		C_i & = (I - K_i H_i) P.
	\end{align*}
\end{lemma}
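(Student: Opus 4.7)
The plan is to reduce the Gauss-Newton search-direction subproblem to a linear-Gaussian least-squares problem of the form already solved by Lemma \ref{lemma:linear}, and then read off the two equivalent representations from the two equivalent forms given there.

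First I would unpack the definition of $\rtp$ in block form. With $g(u)=\bigl(h(u),u\bigr)^{T}$ we have $g'(u_i)=\bigl(H_i,I\bigr)^{T}$ and $z-g(u_i)=\bigl(y-h(u_i),\,m-u_i\bigr)^{T}$. Substituting $r=\rtp$ and $r'(u_i)=-Q^{-1/2}g'(u_i)$ into \eqref{eq:definitionJl} and using the identity $|Q^{-1/2}w|^{2}=|w|^{2}_{Q}$, the search-direction subproblem becomes
\begin{equation}
v_i=\arg\min_v\;\frac12\bigl|H_i v-(y-h(u_i))\bigr|^{2}_{R}+\frac12\bigl|v-(m-u_i)\bigr|^{2}_{P},
\end{equation}
where the block-diagonal structure of $Q$ decouples the two quadratic terms.

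Next I would recognize this as exactly the linear-Gaussian quadratic treated in Lemma \ref{lemma:linear}, with the role of the data played by $y-h(u_i)$, the forward map by $H_i$, the background by $m-u_i$, and the covariances $R$ and $P$ unchanged. Applying the precision form \eqref{eq:precision}--\eqref{eq:precisionmean} of that lemma yields
\begin{equation}
v_i=C_i\bigl\{H_i^{T}R^{-1}(y-h(u_i))+P^{-1}(m-u_i)\bigr\},\qquad C_i^{-1}=H_i^{T}R^{-1}H_i+P^{-1},
\end{equation}
and plugging into $u_{i+1}=u_i+\alpha_i v_i$ gives \eqref{GNTP1}. Applying instead the Kalman-gain form \eqref{eq:linearmean}--\eqref{eq:linearcovariance} of the same lemma, with Kalman gain $K_i=PH_i^{T}(H_iPH_i^{T}+R)^{-1}$, gives
\begin{equation}
v_i=(m-u_i)+K_i\bigl((y-h(u_i))-H_i(m-u_i)\bigr)=(I-K_iH_i)(m-u_i)+K_i(y-h(u_i)),
\end{equation}
which yields \eqref{GNTP2}. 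The equivalence of the two identities for $K_i$ in \eqref{eq:kalmangain}, together with $C_i=(I-K_iH_i)P$, is inherited from Lemma \ref{lemma:linear}, so no separate verification is needed.

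I do not expect any significant obstacle: the whole argument is a bookkeeping reduction. The only step that requires any care is confirming that the block-diagonal $Q$ correctly weights the two residuals so that Lemma \ref{lemma:linear} applies verbatim with $R$ and $P$ as the two noise/prior covariances; once that is checked, the two displayed forms \eqref{GNTP1} and \eqref{GNTP2} are just the ``precision'' and ``Kalman-gain'' parameterizations of the same update.
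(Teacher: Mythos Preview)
Your proposal is correct and follows essentially the same approach as the paper: reduce the Gauss--Newton search-direction subproblem to the linear-Gaussian quadratic of Lemma~\ref{lemma:linear} via the block structure of $Q$, then read off both characterizations. The only cosmetic difference is that the paper derives \eqref{GNTP1} first and then obtains \eqref{GNTP2} from it using the identities $C_iH_i^TR^{-1}=K_i$ and $C_iP^{-1}=I-K_iH_i$, whereas you apply the two forms of Lemma~\ref{lemma:linear} in parallel; the content is the same.
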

\begin{proof}
	The search direction $v_i$  of Gauss-Newton for the objective $\Jtp$ is given by 
	\begin{align}
		v_i &= \arg \min_v \Jtpl (v)  \\
		&= \arg \min_v  \frac12 \bigl|\rtp'(u_i)v + \rtp(u_i) \bigr|^2 \\
		& = \arg \min_v   \frac12 \bigl| z - g(u_i) - g'(u_i) v \bigr|_Q      \\
		& = \arg \min_v  \biggl\{  \frac12 \bigl| y - h(u_i) - h'(u_i) v \bigr|^2_R + \frac12 \bigl|v - (m - u_i) \bigr|_P^2 \biggr\} . \label{eq:GN_obj}
	\end{align}
	Applying Lemma \ref{lemma:linear},  using formulae \eqref{eq:precisionmean} and \eqref{eq:linearcovariance},  we deduce that
	\begin{align*}
		v_i =  C_i\Bigl\{ H_i^T R^{-1} \bigl( y - h(u_i) \bigr)  + P^{-1}(m - u_i)  \Bigr\},
	\end{align*}
	which establishes the characterization \eqref{GNTP1}.
	The equivalence between \eqref{GNTP1} and \eqref{GNTP2} follows from the identity \eqref{eq:kalmangain}, which implies that $C_i H_i^T R^{-1} = K_i$ and $C_iP^{-1} = I-K_iH_i.$ 
\end{proof}


We refer to the Gauss-Newton method  with constant length-step $\alpha_i = \alpha$  applied to $\Jtp$ as the Iterative Extended Kalman Filter (IExKF) algorithm. IExKF was developed in the control theory literature \cite{AHJ07} without reference to the Gauss-Newton optimization method; the agreement between both methods was established in \cite{BC93}. In order to compare IExKF with an ensemble-based method in Section \ref{sec:EnsembleKalmanLearning}, we summarize it here.

\FloatBarrier
\begin{algorithm}
	\caption{Iterative Extended Kalman Filter (IExKF) \label{itexKF}}
	\vspace{0.1in}
	\STATE {\bf Input}: Initialization $u_0 = m$, length-step $\alpha$.  \\
	\STATE For $i = 0, 1, \ldots$ do:
	\begin{enumerate}
		\item Set $K_i = P H_i^T (H_i P H_i^T + R)^{-1}, \quad \quad H_i = h'(u_i). $
		\item  Set 
		\begin{equation}\label{eq:updateiexKF}
			u_{i+1} = u_i + \alpha \Bigl\{ K_i \big( y - h(u_i) \big) + (I - K_i H_i)(m-u_i)   \Bigr\},
		\end{equation}
		or, equivalently, 
		\begin{equation}\label{eq:updateiexKFR}
			u_{i+1} = u_i + \alpha C_i\Bigl\{  H_i^T R^{-1}( y - h(u_i) ) + P^{-1}(m - u_i)  \Bigr\}.
		\end{equation}
	\end{enumerate}
	\STATE{\bf Output}: $u_1, u_2, \ldots$
\end{algorithm}
\FloatBarrier
The next proposition shows that in the linear case,  if $\alpha$ is set to 1, IExKF finds the minimizer of the objective \eqref{eq:TPobjective} in one iteration, and further iterations still agree with the minimizer.
\begin{proposition}\label{proposition:IExKF}
	Suppose that $h(u) = Hu$ is linear  and $\alpha = 1$. Then the output of Algorithm \ref{itexKF} satisfies
	$$ u_i = \mu, \quad \quad   i  = 1, 2, \ldots $$
	where $\mu$ is the minimizer of the Tikhonov-Phillips objective \eqref{eq:TPobjective}.
\end{proposition}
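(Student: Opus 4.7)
The plan is to carry out a direct substitution in the update formula \eqref{eq:updateiexKF}, exploiting the fact that in the linear case $h(u)=Hu$ both the Jacobian $H_i = H$ and the Kalman gain $K_i = PH^T(HPH^T+R)^{-1} =: K$ are independent of the iteration index $i$. The key observation is that with $\alpha = 1$ the iteration becomes affine in $u_i$ and, remarkably, the coefficient of $u_i$ vanishes, so one iteration is enough to land on the minimizer regardless of the starting point.

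Concretely, I would plug $h(u_i) = Hu_i$ and $H_i = H$ into \eqref{eq:updateiexKF} with $\alpha=1$ to obtain
\begin{equation*}
u_{i+1} = u_i + K(y - Hu_i) + (I-KH)(m - u_i).
\end{equation*}
Expanding the right-hand side and grouping the $u_i$ terms, one sees that $u_i - KHu_i - (I-KH)u_i = 0$, so the iteration collapses to
\begin{equation*}
u_{i+1} = Ky + (I-KH)m = m + K(y - Hm).
\end{equation*}
By the closed-form expression \eqref{eq:linearmean} in Lemma \ref{lemma:linear}, the right-hand side equals $\mu$, the minimizer of \eqref{eq:TPobjective}. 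Since the update does not depend on $u_i$, the conclusion $u_i = \mu$ for all $i \geq 1$ follows immediately by induction: $u_1 = \mu$ from applying the formula to $u_0 = m$, and for $i \geq 1$ the same computation (which is independent of the input) gives $u_{i+1} = \mu$.

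There is essentially no obstacle here; the proof is a one-line algebraic simplification combined with an invocation of Lemma \ref{lemma:linear}. The only thing worth stressing is the conceptual point that, in the linear-Gaussian regime with unit length-step, Gauss-Newton is exact for the quadratic objective $\Jtp$, which explains one-step convergence and is the motivation for introducing a length-step $\alpha < 1$ only in the nonlinear setting. I would also briefly verify the alternative form \eqref{eq:updateiexKFR} yields the same conclusion via the identities $CH^T R^{-1} = K$ and $CP^{-1} = I - KH$ recorded in \eqref{eq:kalmangain}, so that the proposition is consistent with both representations of the IExKF update.
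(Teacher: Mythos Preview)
Your proof is correct and follows essentially the same approach as the paper: both reduce the update \eqref{eq:updateiexKF} in the linear case with $\alpha=1$ to $u_{i+1} = m + K(y - Hm)$ and invoke \eqref{eq:linearmean} to identify this with $\mu$. Your version is slightly more explicit about the cancellation of the $u_i$ terms, whereas the paper simply states the simplified form directly.
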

\begin{proof}
	In the linear case we have 
	$$H_i = H, \quad \quad K_i = K = PH^T (H P H^T + R)^{-1}, \quad \quad i  = 0, 1,\ldots$$
	Therefore,  update \eqref{eq:updateiexKF} simplifies as 
	$$u_{i+1} = m + K(y - Hm), \quad i  = 0, 1,\ldots$$
	This implies that, for all $i\ge 1,$   it holds that $u_i=\mu$ with $\mu$ defined in Equation \eqref{eq:linearmean}. 
\end{proof}

\paragraph{Choice of Length-Step}
When implementing Gauss-Newton methods, it is standard practice to perform a line search in the direction of $v_i$ to adaptively choose the length-step $\alpha_i.$ For instance, a common strategy is to guarantee that the Wolfe conditions are satisfied \cite{DS96,NW06}. In this paper we will instead simply set $\alpha_i = \alpha$ for some fixed value of $\alpha,$ and we will follow a similar approach for all the derivative-based and ensemble-based algorithms we consider. There are two main motivations for doing so. First, it is appealing from a practical viewpoint to avoid performing a line search for ensemble-based algorithms. Second, when $\alpha$ is small each derivative-based algorithms we consider can be interpreted as a discretization of an ODE system, while the ensemble-based methods arise as discretizations of SDE systems. These ODEs and SDEs allow us to compare and gain transparent understanding of the gradient structure of the algorithms. They will also allow us to propose some new variants of existing ensemble Kalman methods.  We next describe the continuum limit structure of IExKF.

\paragraph{Continuum Limit}
It is not hard to check that the term in brackets in the update \eqref{eq:updateiexKFR}
$$
H_i^T R^{-1}(y - h(u_i)) + P^{-1}(m - u_i)
$$
is the negative gradient of $\Jtp(u)$, which reveals the following gradient flow structure in the limit of small length-step $\alpha:$
\begin{align}
\begin{split}
	\dot{u} &= C(t) \Bigl\{   h'\bigl(u(t) \bigr)^T R^{-1} \bigl( y - h\big( u(t) \big) \bigr)  + P^{-1}\bigl( m - u(t)\bigr)\Bigr\} \\
	& = - C(t)  \Jtp'\bigr( u(t)  \bigl), \label{gn_cont_limit}
	\end{split}
\end{align}
with  preconditioner
$$C(t) :=\biggl( h'\bigl(u(t)\bigr)^T R^{-1} h'\bigl(u(t) \bigr) + P^{-1}  \biggr)^{-1}.$$
We remark that in the linear case, $C(t) \equiv C,$ where  $C$ is the posterior covariance given by \eqref{eq:linearcovariance}, which agrees with the  inverse of the  Hessian of the Tikhonov Phillips objective.

\subsection{Levenberg-Marquardt Optimization of Data-Misfit Objective}\label{ssec:LMDM}
In this subsection we introduce the Levenberg-Marquardt algorithm and describe its application to the data misfit objective $\Jdm$. We recall that the Levenberg-Marquardt method applied to a general least-squares objective $\J(u) = \frac12 |r(u)|^2$ is a trust region method which, starting from an initialization $u_0,$ sets
\begin{align*}
	u_{i+1} = u_i +  v_i, \quad \quad i  = 0, 1,\ldots
\end{align*}
where
\begin{equation*}
	v_i =  \arg\min_{v} \Jl(v), \quad \text{s.t.} \,\, |v|_P^2 \le \delta_i,  \quad \quad \Jl(v):= \frac12|r'(u_i)v + r(u_i) |^2.
\end{equation*}
Similar to Gauss-Newton methods, the increment $v_i$ is defined as the minimizer of a linearized objective, but now the minimization is constrained to a ball $\{ |v|_P^2 \le \delta_i\}$ in which we \emph{trust} that the objective can be replaced by its linearization. The increment can also be written as
\begin{equation*}
	v_i = \arg\min_v  \Juc \hspace{-0.2cm}(v) ,
\end{equation*}
where
\begin{equation}\label{eq:LMobjectivewithlengthstep}
	\Juc \hspace{-0.2cm}(v)  =  \Jl(v)   + \frac{1}{2\alpha_i} | v |_P^2 .
\end{equation}
The parameter $\alpha_i>0$ plays an analogous role to the length-step in Gauss-Newton methods. Note that the Levenberg-Marquardt increment is the  unconstrained  minimizer of a \emph{regularized} objective. It is for this reason that we say that Levenberg-Marquardt provides an implicit regularization. 

We next consider application of the Levenberg-Marquardt method to the data-misfit objective $\Jdm,$ which we write in standard nonlinear least-squares form:
\begin{align}\label{eq:objectiveLM}
	 \Jdm(u) = \frac12|\rdm(u)|^2, \quad \quad \rdm(u) :=R^{-1/2}\bigl(y - h(u) \bigr). 
\end{align}

\begin{lemma}
	The Levenberg-Marquardt method applied to the data misfit objective $\Jdm$ admits the following characterization:
	\begin{equation}
		u_{i+1} = u_i + K_i \Bigl\{y - h(u_i) \Bigr\},
	\end{equation}
	where $$K_i = \alpha_i P H_i^T (\alpha_i H_i P H_i^T + R)^{-1}, \quad \quad H_i = h'(u_i).$$
\end{lemma}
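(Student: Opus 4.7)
The plan is to recognize the Levenberg-Marquardt subproblem as an instance of the linear Tikhonov-Phillips problem already solved in Lemma \ref{lemma:linear}, and simply read off the answer. First I would write out $\Juc(v)$ explicitly for the data-misfit residual $\rdm(u) = R^{-1/2}(y - h(u))$. Using $H_i := h'(u_i)$, we have $\rdm'(u_i) v = -R^{-1/2} H_i v$, so
\begin{equation}
\Juc(v) = \frac{1}{2} \bigl| (y - h(u_i)) - H_i v \bigr|_R^2 + \frac{1}{2\alpha_i} |v|_P^2.
\end{equation}

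Next I would identify this with the objective appearing on the right-hand side of Equation \eqref{eq:linearTikhonov} under the dictionary
\begin{equation}
u \leftrightarrow v, \quad y \leftrightarrow y - h(u_i), \quad H \leftrightarrow H_i, \quad m \leftrightarrow 0, \quad P \leftrightarrow \alpha_i P, \quad R \leftrightarrow R.
\end{equation}
Lemma \ref{lemma:linear}, specifically formula \eqref{eq:linearmean} with $m = 0$, then yields immediately
\begin{equation}
v_i = K_i \bigl( y - h(u_i) \bigr), \quad \quad K_i = (\alpha_i P) H_i^T \bigl( H_i (\alpha_i P) H_i^T + R \bigr)^{-1},
\end{equation}
which simplifies to the stated Kalman gain. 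Setting $u_{i+1} = u_i + v_i$ gives the result.

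There is no real obstacle here: the entire content of the lemma is the recognition that adding the trust-region penalty $\frac{1}{2\alpha_i} |v|_P^2$ to the linearized data-misfit term produces a standard regularized linear least-squares problem, whose minimizer is given by the Kalman-gain formula of Lemma \ref{lemma:linear}. The only thing worth double-checking is the bookkeeping of the factor $\alpha_i$ in the effective prior covariance, which propagates cleanly into the final expression for $K_i$ via the equivalent Kalman-gain identity $K = PH^T(HPH^T+R)^{-1}$ in Equation \eqref{eq:kalmangain}.
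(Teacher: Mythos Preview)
Your proposal is correct and follows exactly the paper's approach: expand $\Jucdm(v)$ into the form $\frac12|y-h(u_i)-H_i v|_R^2 + \frac{1}{2\alpha_i}|v|_P^2$ and invoke Lemma~\ref{lemma:linear}. The paper's proof is terser (it simply says ``the result follows from Lemma~\ref{lemma:linear}''), whereas you spell out the dictionary explicitly, but the content is identical.
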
 
\begin{proof}
	Note that the increment $v_i$ is defined as the unconstrained minimizer of
	\begin{align}
		\begin{split}
			\Jucdm(v) &=   \frac12|\rdm'(u_i)v + \rdm(u_i) |^2  + \frac{1}{2\alpha_i} |v|^2_P  \\
			& = \frac12| y - h(u_i) - h'(u_i)v |^2_R + \frac{1}{2\alpha_i} |v|^2_P.\label{eq:LMDM}
		\end{split}
	\end{align}
	The result follows from Lemma \ref{lemma:linear}. 
\end{proof}
Similar to the previous section,  we will focus on implementations with constant length-step $\alpha_i = \alpha$, which leads to the following algorithm.

\FloatBarrier
\begin{algorithm}
	\caption{Iterative Levenberg-Marquardt with Data Misfit (ILM-DM) \label{ILMDM}}
	\vspace{0.1in}
	\STATE {\bf Input}: Initialization $u_0 = m$, length-step $\alpha$.  \\
	\STATE For $i = 0, 1, \ldots$ do:
	\begin{enumerate}
		\item Set  $K_i = \alpha P H_i^T (\alpha H_i P H_i^T + R)^{-1}, \quad \quad H_i = h'(u_i).$
		\item Set 
		\begin{equation}\label{eq:ILMDM}
			u_{i+1} = u_i + K_i \Bigl\{ y - h(u_i)  \Bigr\}.
		\end{equation}
	\end{enumerate}
	\STATE{\bf Output}: $u_1, u_2, \ldots$
\end{algorithm}
\FloatBarrier
When $\alpha = 1$, the following linear-case result shows that ILM-DM reaches the minimizer of $\Jtp$ in one iteration. However, in contrast to IExKF, further iterations of ILM-DM  will typically worsen the optimization of $\Jtp$, and start moving towards minimizers of $\Jdm.$
\begin{proposition}\label{proposition:ILMDM}
	Suppose that $h(u) = Hu$ is linear  and $\alpha = 1$. Then the output of Algorithm \ref{ILMDM} satisfies
	$$ u_1= \arg\min_u \Jtp(u),$$
	where $\Jtp$ is the Tikhonov-Phillips objective \eqref{eq:TPobjective}.
\end{proposition}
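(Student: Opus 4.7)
The plan is to observe that in the linear case with $\alpha=1$, a single iteration of Algorithm \ref{ILMDM} reduces immediately to the closed-form expression for the Tikhonov--Phillips minimizer supplied by Lemma \ref{lemma:linear}. So this proof should be essentially a one-line computation, and the main task is simply to identify the correct substitutions.

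First I would specialize the algorithm to the stated setting. With $h(u) = Hu$ we have $H_i = H$ for all $i$, and with $\alpha = 1$ the gain in step 1 becomes
\begin{equation}
K_0 = P H^T (H P H^T + R)^{-1},
\end{equation}
which is precisely the Kalman gain $K$ of Equation \eqref{eq:kalmangain}. Since the algorithm is initialized at $u_0 = m$, the update \eqref{eq:ILMDM} at $i=0$ reads
\begin{equation}
u_1 = m + K (y - Hm).
\end{equation}

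Next I would invoke Lemma \ref{lemma:linear}, specifically the expression \eqref{eq:linearmean} for the minimizer $\mu$ of the linear Tikhonov--Phillips objective, which is exactly $\mu = m + K(y - Hm)$. Comparing the two formulas yields $u_1 = \mu = \arg\min_u \Jtp(u)$, as claimed.

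There is no genuine obstacle here; the only thing worth remarking on (and which the proposition implicitly contrasts with Proposition \ref{proposition:IExKF}) is that the argument breaks down for $i \geq 2$: once $u_1 = \mu$, the subsequent increment $K(y - H\mu)$ is generally nonzero because the Levenberg--Marquardt regularizer in \eqref{eq:LMDM} penalizes $|v|_P^2$ rather than $|v - (m-u_i)|_P^2$, so later iterates drift toward minimizers of the unregularized data-misfit $\Jdm$. This is the structural distinction with IExKF and could be flagged briefly after the main computation, though it is not needed for the proof of the proposition itself.
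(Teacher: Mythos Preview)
Your proof is correct and is essentially the same as the paper's: the paper simply notes that in the linear case $u_{i+1} = u_i + K(y - Hu_i)$ and refers back to the identical computation in Proposition \ref{proposition:IExKF}, which amounts to exactly the substitution $K_0 = K$, $u_0 = m$, and the invocation of \eqref{eq:linearmean} that you carry out. Your closing remark about $i \ge 2$ is accurate and matches the paper's surrounding discussion, though the paper does not include it in the proof itself.
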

\begin{proof}
	The proof is identical to that of Proposition \ref{proposition:IExKF}, noting that in the linear case
	$u_{i+1} = u_i + K(y - Hu_i).$
\end{proof}

\begin{example}[Convergence of ILM-DM with invertible observation map]\label{example:invertible}
Suppose that $H \in \R^{d\times d}$ is invertible and $\alpha = 1.$ Then, writing 
$$u_{i+1} = (I-KH) u_i + Ky$$
and noting that $\rho(I-KH)<1$ \cite{AM79}, it follows that $u_i \to u^*$, where $u^*$ is the unique solution to $y = Hu.$ That is, the iterates of ILM-DM converge to the unique minimizer of the data misfit objective $\Jdm.$
\end{example}

\paragraph{Choice of Length-Step}
When implementing Levenberg-Marquardt algorithms, the length-step parameter $\alpha_i$ is often chosen adaptively, based on the objective. However, similar to  Section \ref{ssec:GNTP}, we fix $\alpha_i=\alpha$ to be a small value which leads to an ODE continuum limit. 
%
\paragraph{Continuum Limit}
We notice that, in the limit of small length-step $\alpha$, update \eqref{eq:ILMDM} can be written as
\begin{equation*}
	u_{i+1} = u_i + \alpha P H_i^T R^{-1} \Bigl\{ y - h(u_i)  \Bigr\}.
\end{equation*}
The term $H_i^T R^{-1} \Bigl\{ y - h(u_i)  \Bigr\}$ is the negative gradient of $\Jdm(u)$, which reveals the following gradient flow structure
\begin{align}
\begin{split}
	\dot{u} &= P h'\bigl( u(t) \bigr)^T R^{-1} \Bigl\{ y - h \bigl(u(t)\bigr) \Bigr\} \\
	& = - P \Jdm'(u), \label{eq:lm_cont_limit}
	\end{split}
\end{align}
where the preconditioner $P$ is interpreted as the prior covariance in the Bayesian framework.\nc

\subsection{Levenberg-Marquardt Optimization of Tikhonov-Phillips Objective}\label{ssec:LMTP}
In this subsection we describe the application of the Levenberg-Marquardt algorithm to the Tikhonov-Phillips objective $\Jtp$. 

\begin{lemma}
	The Levenberg-Marquardt method applied to the data misfit objective $\Jtp$ admits the following characterization:
	\begin{align*}
		u_{i+1} = u_i + K_i  \Bigl\{ z - g(u_i) \Bigr\},
	\end{align*}
	where 
	$$K_i = \alpha_i P G_i^T ( \alpha_i G_i P G_i^T + Q )^{-1}, \quad \quad G_i = g'(u_i).$$
\end{lemma}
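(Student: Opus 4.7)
The plan is to mirror the proof strategy used for the preceding ILM-DM lemma. In that argument, one wrote the data-misfit objective in standard nonlinear least-squares form, formed the Levenberg-Marquardt unconstrained subproblem $\Jucdm(v)$, and then invoked Lemma \ref{lemma:linear} to read off the minimizer in closed form. The same template should work verbatim here, with the Tikhonov-Phillips residual $\rtp$ in \eqref{eq:objectiveGN} replacing $\rdm$.

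Concretely, I would first recall that $\Jtp(u) = \tfrac12 |\rtp(u)|^2$ with $\rtp(u) = Q^{-1/2}(z - g(u))$, so $\rtp'(u_i) = -Q^{-1/2} G_i$ where $G_i = g'(u_i)$. Substituting into the generic LM objective \eqref{eq:LMobjectivewithlengthstep} gives
\begin{equation*}
\Juctp(v) = \frac12 \bigl|z - g(u_i) - G_i v \bigr|_Q^2 + \frac{1}{2\alpha_i} |v|_P^2.
\end{equation*}
This is an instance of the quadratic minimization analyzed in Lemma \ref{lemma:linear} under the substitutions $u \leftrightarrow v$, $y \leftrightarrow z - g(u_i)$, $H \leftrightarrow G_i$, $R \leftrightarrow Q$, $m \leftrightarrow 0$, and prior covariance $P \leftrightarrow \alpha_i P$. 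Applying formula \eqref{eq:linearmean} then yields
\begin{equation*}
v_i = \alpha_i P G_i^T \bigl( \alpha_i G_i P G_i^T + Q \bigr)^{-1} \bigl( z - g(u_i) \bigr) = K_i \bigl( z - g(u_i) \bigr),
\end{equation*}
and the stated update $u_{i+1} = u_i + v_i$ follows.

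The calculation is essentially mechanical, so I do not anticipate any real obstacle. The only point meriting some care is the bookkeeping for the length-step: the regularizer $\tfrac{1}{2\alpha_i}|v|_P^2$ rewrites as $\tfrac12 |v|_{\alpha_i P}^2$, which is why the factor of $\alpha_i$ appears multiplying $P$ (rather than, say, $Q$) in the expression for the Kalman gain $K_i$. With that identification in place, the conclusion matches the stated formula exactly.
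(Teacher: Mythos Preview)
Your proposal is correct and follows essentially the same approach as the paper: both write out the unconstrained LM subproblem $\Juctp(v) = \tfrac12|z - g(u_i) - G_i v|_Q^2 + \tfrac{1}{2\alpha_i}|v|_P^2$ and read off the minimizer via Lemma~\ref{lemma:linear}. The only cosmetic difference is that the paper phrases this as ``same form as \eqref{eq:LMDM} with $y\to z$, $h\to g$, $R\to Q$'' and defers to the ILM-DM lemma, whereas you invoke Lemma~\ref{lemma:linear} directly with the explicit substitutions.
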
 
\begin{proof}
	Note that the increment $v_i$ is defined as the unconstrained minimizer of
	\begin{align}
		\Juctp(v) &=  \Jtpl(v)  + \frac{1}{2\alpha_i}|v|^2_P \\
		& = \frac12 |z - g(u_i) -g'(u_i)v |_Q^2 + \frac{1}{2\alpha_i}|v|^2_P, \label{eq:LMTP_obj}
	\end{align}
	which has the same form as Equation \eqref{eq:LMDM} replacing $y$ with $z,$  $h$ with $g,$ and $R$ with $Q.$ 
\end{proof}

Setting $\alpha_i = \alpha$ leads to the following algorithm.
\FloatBarrier
\begin{algorithm}
	\caption{Iterative Levenberg-Marquardt with Tikhonov-Phillips (ILM-TP) \label{ILMTP}}
	\vspace{0.1in}
	\STATE {\bf Input}: Initialization $u_0 = m$, length-step $\alpha$.  \\
	\STATE For $i = 0, 1, \ldots$ do:
	\begin{enumerate}
		\item Set $K_i = \alpha P G_i^T (\alpha G_i P G_i^T + Q)^{-1}, \quad \quad G_i = g'(u_i).$
		\item Set 
		\begin{equation}\label{eq:ILMTP}
			u_{i+1} = u_i + K_i \Bigl\{ z - g(u_i)  \Bigr\}.
		\end{equation}
	\end{enumerate}
	\STATE{\bf Output}: $u_1, u_2, \ldots$
\end{algorithm}
\FloatBarrier

\begin{proposition}\label{proposition:ILMTP}
	Suppose that $h(u) = Hu$ is linear and $\alpha = 1$. The output of Algorithm \ref{ILMTP} satisfies
	\begin{equation}
		u_1= \arg \min_u \Bigl\{ \Jtp(u) + \frac12|u - m|^2   \Bigr\}.
	\end{equation} 
\end{proposition}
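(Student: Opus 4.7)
My plan is to recognize that linearity of $h$ causes the Gauss--Newton-type linearization inside $\Juctp$ to become exact, so that the ILM-TP subproblem reduces---after translating by the initial point $u_0=m$---to the claimed regularized Tikhonov--Phillips minimization. I read the unsubscripted $|u-m|^2$ in the statement as $|u-m|_P^2$, since the penalty in \eqref{eq:LMTP_obj} is weighted by $P^{-1}$; otherwise the claim would force $P=I$, which is clearly not intended.

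First I would record that, with $u_0=m$ and $\alpha=1$, the update \eqref{eq:ILMTP} reads $u_1 = m + v_0$, where $v_0$ minimizes
\begin{align*}
\Juctp(v) = \frac12 |z - g(m) - g'(m) v|_Q^2 + \frac12 |v|_P^2.
\end{align*}

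Next I would exploit the fact that when $h(u)=Hu$ the stacked map $g(u) = \bigl[ Hu;\, u\bigr]$ is affine with constant Jacobian, so that $g(m) + g'(m) v = g(m+v)$ exactly. Combined with the identity $\Jtp(u) = \frac12 |z - g(u)|_Q^2$ from \eqref{eq:objectiveGN}, this rewrites the first term of $\Juctp(v)$ as $\Jtp(m+v)$, giving
\begin{align*}
\Juctp(v) = \Jtp(m+v) + \frac12 |v|_P^2.
\end{align*}

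Finally, I would apply the bijection $u = m + v$, under which $|v|_P^2 = |u-m|_P^2$, to conclude $u_1 = m + v_0 = \arg\min_u \bigl\{\Jtp(u) + \frac12|u-m|_P^2\bigr\}$. There is no substantive obstacle: the argument is essentially a change of variable, hinging only on linearity of $h$ (to make the Gauss--Newton linearization exact) and on the initialization $u_0=m$ (to convert the $|v|_P^2$ penalty into $|u-m|_P^2$). An alternative route would apply Lemma \ref{lemma:linear} directly to the linear-quadratic $\Juctp$, yielding a closed-form Kalman-type expression for $u_1$ via the augmented data $z$ and observation $G=[H;\,I]$, but the substitution route above is shorter and makes the regularizing effect of Levenberg--Marquardt transparent.
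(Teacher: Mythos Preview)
Your proof is correct, including the reading of $|u-m|^2$ as $|u-m|_P^2$; the Levenberg--Marquardt penalty in \eqref{eq:LMTP_obj} is $\frac{1}{2\alpha}|v|_P^2$, so the unsubscripted norm in the statement is a slip.

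Your route differs from the paper's. The paper argues by reduction: it observes that ILM-TP is literally ILM-DM applied with the substitutions $y\mapsto z$, $h\mapsto g$, $R\mapsto Q$ (and the same trust-region matrix $P$), so Proposition~\ref{proposition:ILMDM} applies verbatim and yields $u_1=\arg\min_u\bigl\{\tfrac12|z-Gu|_Q^2+\tfrac12|u-m|_P^2\bigr\}=\arg\min_u\bigl\{\Jtp(u)+\tfrac12|u-m|_P^2\bigr\}$. You instead unpack the LM subproblem directly: use linearity of $g$ to make the first-order model exact, then translate by $u=m+v$. Both arguments rest on the same underlying fact---that for a linear forward map the LM step with $\alpha=1$ is a single exact Tikhonov solve---but the paper's version emphasizes the structural parallel between ILM-DM and ILM-TP and gets the result in one sentence, while yours is self-contained and makes the role of the initialization $u_0=m$ explicit. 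The ``alternative route'' you mention (applying Lemma~\ref{lemma:linear} to the augmented system) is essentially what the paper's chain of reductions ultimately rests on.
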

\begin{proof}
	The result is a corollary of Proposition \ref{proposition:ILMDM}. To see this, note that $\Jtp(u) = \frac12 |z - g(u)|^2_Q$ can be viewed as a data-misfit objective with data $z,$ forward model $g(u)$ and observation matrix $Q.$ Then,  ILM-TP can be interpreted as applying ILM-DM to $\Jtp,$ and the objective $\Jtp(u) + \frac12|u-m|^2$ as its Tikhonov-Phillips regularization.
\end{proof}
\begin{example}[Convergence of ILM-TP with linear invertible observation map.]
It is again instructive to consider the case where $H \in \R^{d\times d}$ is invertible. Then, following the same reasoning as in Example \ref{example:invertible}  we deduce that the iterates of ILM-TP converge to the unique minimizer of $\Jtp.$
\end{example}

\paragraph{Continuum Limits} In the limit of small length-step $\alpha$, we can derive the gradient flow structure of update \eqref{eq:ILMTP}. This is similar to Section \ref{ssec:LMDM}, with $\Jdm$ replaced by $\Jtp$. Precisely, the gradient flow of ILM-TP is given by
\begin{align*}
	\dot{u} &= P g'\bigl( u(t) \bigr)^T Q^{-1} \Bigl\{ z - g\bigl(u(t)\bigr) \Bigr\} \\
	& = -P \Jtp'(u).
\end{align*}



\section{Ensemble-based Optimization for Nonlinear Least-Squares}
\label{sec:EnsembleKalmanLearning}
In this section we review three subfamilies of iterative methods that update an ensemble $\{ u_i^{(n)} \}_{n=1}^N$ employing Kalman-based formulae, where $i = 0, 1, \ldots$ denotes the iteration index and $N$ is a fixed ensemble size. Each ensemble member $u_i^{(n)}$ is updated by optimizing a (random) objective $\J_i^{(n)}$ defined using the current ensemble $\{ u_i^{(n)} \}_{n=1}^N$ and/or the initial ensemble $\{ u_0^{(n)} \}_{n=1}^N$. The optimization is performed without evaluating derivatives by invoking a \emph{statistical linearization} of a Gauss-Newton or Levenberg-Marquardt algorithm. In analogy with the previous section, the three subfamilies of ensemble methods we consider differ in the choice of the objective and in the choice of the optimization algorithm. Table \ref{Updateswithgradients} summarizes the derivative and ensemble methods considered in the previous and the current section.

\FloatBarrier
\begin{table}
	\begin{center}
		\begin{tabular}{ | c | c |c|c|}
			\hline
			Objective & Optimization & Derivative Method & Ensemble Method    \\ \hline
		 $\Jtp$ & GN &  IExKF & IEKF  \\ \hline
			 $\Jdm$ & LM  & ILM-DM  & EKI    \\ \hline
			 $\Jtp$ & LM & ILM-TP   & TEKI  \\ \hline
		\end{tabular}
		\bigskip
		\caption{Summary of the main algorithms in Sections \ref{sec:background} and \ref{sec:EnsembleKalmanLearning}.}		
		\label{Updateswithgradients}
	\end{center}
\end{table}

Given an ensemble $\{u_i^{(n)} \}_{n=1}^N$ we use the following notation for ensemble empirical means 
\begin{align*}
m_i &= \frac{1}{N} \sum_{n=1}^N u_i^{(n)}, \quad \quad h_i = \frac{1}{N} \sum_{n=1}^N h(u_i^{(n)}),
\end{align*}
and empirical covariances
\begin{align*}
P_i^{uu}  &= \frac{1}{N} \sum_{n=1}^N (u_i^{(n)} - m_i)  (u_i^{(n)} - m_i)^T,  \quad \quad  \\
P_i^{uy} &= \frac{1}{N} \sum_{n=1}^N \bigl( u_i^{(n)} - m_i  \bigr)   \bigl( h(u_i^{(n)}) - h_i  \bigr)^T,  \\
P_i^{yy} &= \frac{1}{N} \sum_{n=1}^N \bigl(h(u_i^{(n)}) - h_i \bigr)  \bigl(h(u_i^{(n)}) - h_i \bigr)^T.
\end{align*}

Two overarching themes that underlie the derivation and analysis of the ensemble methods studied in this section and the following one are the use of a statistical linearization to avoid evaluation of derivatives, and the study of continuum limits. We next introduce these two ideas. 

\paragraph{Statistical Linearization} If $h(u) = Hu$ is linear, we have
\begin{equation*}
P_i^{uy} = P_i^{uu} H^T,
\end{equation*}
which motivates the approximation in the general nonlinear case
\begin{equation}\label{eq:stat_linearization}
h'(\uin) \approx (P_i^{uy})^T (P_i^{uu})^{-1} =: H_i,  \quad \quad n = 1,\dots,N,
\end{equation}
where here and in what follows $(P_i^{uu})^{-1}$ denotes the pseudoinverse of $P_i^{uu}.$
Notice that \eqref{eq:stat_linearization} can be regarded as a linear least-squares fit of pairs $\big\{ \big(\uin, h(\uin) \big) \big\}_{n=1}^N$  normalized around their corresponding empirical means $m_i$ and $h_i$. We remark that in order for the approximation in \eqref{eq:stat_linearization} to be accurate, the ensemble size $N$ should not be much smaller than the input dimension $d$.

\paragraph{Continuum Limit}
We will gain theoretical understanding by studying continuum limits. Specifically, each algorithm includes a length-step parameter $\alpha>0,$ and the evolution of the ensemble for small $\alpha$ can be interpreted as a discretization of an SDE system. We denote by  $\{u^{(n)}(t)\}_{n=1}^N$ the sample paths of the underlying SDE. For each $1 \le n \le N$, we have $u_0^{(n)} = u^{(n)}(0)$, and we view $\uin$ as an approximation of $u^{(n)}(t)$ for $t=\alpha i$. Similarly as above, we define $P^{uu}(t), P^{uy}(t), P^{yy}(t)$ as the corresponding empirical covariances at time $t \ge 0$.

\subsection{Ensemble Gauss-Newton Optimization of Tikhonov-Phillips Objective}\label{ssec:IenKFSL}
\subsubsection{Iterative Ensemble Kalman Filter}
Given an ensemble $\{u_i^{(n)} \}_{n=1}^N$, consider the following Gauss-Newton update for each $n$:
\begin{equation}\label{eq:GNupdateensemble}
u_{i+1}^{(n)} = u_i^{(n)} + \alpha v_i^{(n)},
\end{equation}
where $\alpha>0$ is the length-step, and $\vin$ is the minimizer of the following (linearized) Tikhonov-Phillips objective  (cf. equation \eqref{eq:GN_obj})
\begin{equation}\label{eq:IEKF_new_C}
\Jtpn(v) = \frac12 \bigl| y_i^{(n)} - h(u_i^{(n)} )  - H_i v \bigr|^2_R + \frac12 \bigl| u_0^{(n)} - \uin - v \bigr|^2_{P_0^{uu}}, \quad \quad y_i^{(n)} \sim \Nc(y,\alpha^{-1} R).
\end{equation}
Notice that we adopt the statistical linearization  \eqref{eq:stat_linearization} in the above formulation. Applying Lemma \ref{lemma:linear}, the minimizer $\vin$ can be calculated as
\begin{equation}\label{eq:IEKF_step1}
v_i^{(n)} = C_i \Bigl\{ H_i^T R^{-1} \big( \yin - h(\uin) \big) + (P_0^{uu})^{-1} \big( u_0^{(n)} - \uin \big) \Bigr\},
\end{equation}
or, in an equivalent form, 
\begin{equation}\label{eq:IEKF_step2}
v_i^{(n)} = K_i \big( \yin - h(\uin) \big) + (I - K_i H_i) (u_0^{(n)} - u_i^{(n)}), 
\end{equation}                      
where
\begin{align*}
C_i &= \big( H_i^T R^{-1} H_i + (P_0^{uu})^{-1} \big)^{-1}, \\
K_i &= P_0^{uu} H_i^T (H_i P_0^{uu} H_i^T + R)^{-1}.
\end{align*}
Combining \eqref{eq:GNupdateensemble} and \eqref{eq:IEKF_step2}  leads to the Iterative Ensemble Kalman Filter (IEKF) algorithm.
\begin{algorithm}
\caption{Iterative Ensemble Kalman Filter (IEKF) \label{itenKF}} 
\vspace{0.1in}
\STATE {\bf Input}: Initial ensemble $\{ u_0^{(n)}\}_{n=1}^N$  sampled independently from the prior, length-step $\alpha$.  \\
\STATE For $i = 0, 1, \ldots$ do:
\begin{enumerate}
\item Set $K_i = P_0^{uu} H_i^T (H_i P_0^{uu} H_i^T + R)^{-1}, \quad \quad H_i = (P_i^{uy})^T (P_i^{uu})^{-1}.  $
\item Draw $y_i^{(n)} \sim \Nc(y,\alpha^{-1} R)$ and set
\begin{equation}\label{eq:IEKF_update}
u_{i+1}^{(n)} = \uin + \alpha  \Bigl\{ K_i \big( y_i^{(n)}  - h(u_i^{(n)}) \big) + (I - K_i H_i)  \big( u_0^{(n)} - \uin \big)   \Bigr\}, \quad \quad 1 \le n \le N. 
\end{equation}
\end{enumerate}
\STATE{\bf Output}: Ensemble means $m_1, m_2, \ldots$
\end{algorithm}
\FloatBarrier

 We highlight that IEKF is a natural ensemble-based version of the derivative-based IExKF Algorithm \ref{itexKF}  with update \eqref{eq:updateiexKF}.  Algorithm \ref{itenKF} is a slight modification of the iterative ensemble Kalman algorithm proposed in 
\cite{SU12}. The difference is that \cite{SU12} sets  $H_i = (P_i^{uy})^T (P_0^{uu})^{-1}$ rather than $H_i = (P_i^{uy})^T (P_i^{uu})^{-1}$. Our modification guarantees that Algorithm \ref{itenKF} is well-balanced in the sense that if $\alpha =1$, $u_0^{(n)} \sim \Nc(m,P)$ and  $h(u) = Hu$ is linear, 
then the output of Algorithm \ref{itenKF} satisfies that, as $N\to \infty,$
$$m_i \to \mu, \quad \quad  i  = 1, 2, \ldots$$
where $\mu$ is the minimizer of $\Jtp(u)$ given in Equation \eqref{eq:TPobjective}. This is analogous to Proposition \ref{proposition:IExKF} for IExKF.  A detailed explanation is included in Section \ref{sec:analysis_IEKF} below. 

Other statistical linearizations and approximations of the Gauss-Newton scheme are possible. We next give a high-level description of the method proposed in \cite{RRZL06}, one of the earliest applications of iterative ensemble Kalman methods for inversion in the petroleum engineering literature. Consider the alternative characterization of the Gauss-Newton update \eqref{eq:IEKF_step1}. However, instead of using a different preconditioner $C_i$ for each step, \cite{RRZL06} uses a fixed preconditioner $C_* = P_0^{uu} - P_0^{uy}(R + P_0^{yy})^{-1}(P_0^{uy})^T$. Note that $C_*$ can be viewed as an approximation of $C_0:$
\begin{equation*}
\begin{split}
C_* &\approx P_0^{uu} - P_0^{uu} H_0^T (R + H_0 P_0^{uu} H_0^T)^{-1} H_0 P_0^{uu} \\
	&=\big( H_0^T R^{-1} H_0 + (P_0^{uu})^{-1} \big)^{-1} = C_0.
\end{split}
\end{equation*}
This leads to the following algorithm. 

\FloatBarrier
\begin{algorithm}
\caption{Iterative Ensemble Kalman Filter  (IEKF-RZL)\label{itenKFalt}}
\vspace{0.1in}
\STATE {\bf Input}: Initial ensemble $\{ u_0^{(n)}\}_{n=1}^N$ sampled independently from the prior, length-step $\alpha$.  
\begin{enumerate}
\item Set $C_* = P_0^{uu} - P_0^{uy}\bigl( R + P_0^{yy} \bigr)^{-1}  (P_0^{uy})^T . $
\end{enumerate}
\STATE For $i = 0, 1, \ldots$ do:
\begin{enumerate}
\item Set $H_i = (P_i^{uy})^T (P_i^{uu})^{-1}.$
\item Draw $y_i^{(n)} \sim \Nc(y, \alpha^{-1} R)$ and set
$$u_{i+1}^{(n)} = u_i^{(n)} + \alpha\, C_* \Bigl\{ H_i^T R^{-1} \big( \yin - h(\uin) \big) + (P_0^{uu})^{-1} \big( u_0^{(n)} - \uin \big) \Bigr\}, \quad \quad 1 \le n \le N. $$
\end{enumerate}
\STATE{\bf Output}: Ensemble means $m_1, m_2, \ldots$
\end{algorithm}
\FloatBarrier

 We note that IEKF-RZL is a natural ensemble-based version of the derivative-based IExKF Algorithm \ref{itexKF}  with update \eqref{eq:updateiexKFR}.  We have empirically observed in a wide range of numerical experiments that Algorithm \ref{itenKF} is more stable than Algorithm \ref{itenKFalt}, and we now give a heuristic argument for the advantage of Algorithm \ref{itenKF} in small noise regimes. 
\begin{itemize}
\item The update formula in Algorithm \ref{itenKF} is motivated by the large $N$ approximation 
$$P_0^{uu} H_i^T (H_i P_0^{uu} H_i^T + R)^{-1} \approx P H_i^T (H_i P H_i^T + R)^{-1},$$ which only requires that $P_0^{uu}$ is a good approximation of $P$.
\item The update formula in Algorithm \ref{itenKFalt} may be derived by invoking a large $N$ approximation of several terms. In particular, the error arising from the approximation 
$$C_* H_i^T R^{-1} \approx  C_0 H_i^T R^{-1},$$ 
 gets amplified when $R$ is small.
\end{itemize}

Empirical evidence of the instability of Algorithm \ref{itenKFalt} will be given in Section \ref{ex:elliptic2d}.

\subsubsection{Analysis of IEKF}\label{sec:analysis_IEKF}
In the literature \cite{RRZL06, SU12}, the length-step $\alpha$ is sometimes set to be 1. Here we state a simple observation about Algorithm \ref{itenKF} when $\alpha=1$ and $h(u) = Hu$ is linear. We further assume that $H_i \equiv H$ for all $i$. Then $K_i \equiv K_0 := P_0^{uu} H^T (H P_0^{uu} H^T + R)^{-1}$, and the update \eqref{eq:IEKF_update} can be simplified as
\begin{equation*}
\begin{split}
u_{i+1}^{(n)} &= \uin + K_0(\yin - H \uin) + (I - K_0H) (u_0^{(n)} - \uin) \\
		  &= u_0^{(n)} + K_0(\yin - H u_0^{(n)}),
\end{split}
\end{equation*}
where $\yin \sim \Nc(y, R)$. If $\{ u_0^{(n)}\}_{n=1}^N$ are sampled independently from the prior $\Nc(m, P)$ and we let $N\rightarrow \infty$, we have $K_0 \rightarrow K = PH^T(HPH^T + R)^{-1}$ and, by the law of large numbers, for any $i \ge 1$,
\begin{equation*}
\begin{split}
\frac{1}{N}\sum_{n=1}^N \uin &= (I - K_0 H) \cdot \frac{1}{N}\sum_{n=1}^N u_0^{(n)} + K_0 \cdot \frac{1}{N}\sum_{n=1}^N \yin \\
							 &\rightarrow (I - KH) m + K y,
\end{split}
\end{equation*}
which is the posterior mean (and mode) in the linear setting.  In other words, in the large ensemble limit, the ensemble mean recovers the posterior mean \emph{after one iteration.} However, while the choice $\alpha = 1$  may be effective in low dimensional (nonlinear) inverse problems, we do not recommend it when the dimensionality is high, as $H_i$ might not be a good approximation of $H$. Thus, we introduce Algorithms \ref{itenKF} and \ref{itenKFalt} with a choice of length-step $\alpha,$ resembling the derivative-based Gauss-Newton method discussed in Section \ref{ssec:GNTP}. Further analysis of a new variant of Algorithm \ref{itenKF} will be conducted in a continuum limit setting in Section \ref{sec:EnsembleKalmanLearningNew}.

\begin{remark}
Some remarks:
\begin{enumerate}
\item Although this will not be the focus of our paper, the IEKF Algorithm \ref{itenKF} (together with the EKI Algorithm \ref{algorithm:EKI_step} and TEKI Algorithm \ref{algorithm:TEKI_step} to be discussed later) enjoys the `initial subspace property' studied in previous works \cite{ILS13, SS17, CST20} by which, for any $i$ and any initialization of $\{u_0^{(n)}\}_{n=1}^N,$
\begin{equation*}
\text{span} \bigl(\{u_i^{(n)}\}_{n=1}^N\bigr) \subset \text{span} \bigl(\{u_0^{(n)}\}_{n=1}^N\bigr).
\end{equation*} This can be shown easily for the IEKF Algorithm \ref{itenKF} by expanding the $P_0^{uu}$ term in $K_i$ in the update formula \eqref{eq:IEKF_update}. 
\item Since we assume a Gaussian prior $\Nc(m, P)$ on $u$, a natural idea is to replace $u_0^{(n)}$ by $m$ and $P_0^{uu}$ by $P$ in the update formula \eqref{eq:IEKF_update}. We pursue this idea in Section \ref{sec:IEKS}, where we introduce a new variant of Algorithm \ref{itenKF} and analyze it in the continuum limit setting. While the initial subspace property breaks down, this new variant is numerically promising, as shown in Section \ref{sec:num}.
\end{enumerate}
\end{remark}


\subsection{Ensemble Levenberg-Marquardt Optimization of Data-Misfit Objective}\label{ssec:EKI}
\subsubsection{Ensemble Kalman Inversion}
Given an ensemble $\{u_i^{(n)} \}_{n=1}^N$, consider the following Levenberg-Marquardt update for each $n$:
\begin{equation}\label{eq:updateensembleLM}
u_{i+1}^{(n)} = u_i^{(n)} + v_i^{(n)},
\end{equation}
where $v_i^{(n)}$ is the minimizer of the following regularized (linearized) data-misfit objective  (cf. equation \eqref{eq:LMDM}) 
\begin{equation}\label{eq:EKI_new_C}
\Jucdmn \hspace{-0.2cm}(v) = \frac12 \bigl| y_i^{(n)} - h(u_i^{(n)} )  - H_i v \bigr|^2_R  + \frac{1}{2\alpha} \bigl|v\bigr|^2_{P_i^{uu}}, \quad \quad y_i^{(n)} \sim \mathcal{N}(y, \alpha^{-1} R),
\end{equation}
and $\alpha>0$ will be regarded as a length-step. Notice that we adopt the statistical linearization \eqref{eq:stat_linearization} in the above formulation. Applying Lemma \ref{lemma:linear}, we can calculate the minimizer $v_i^{(n)}$ explicitly:
\begin{equation}\label{eq:EKI_step1}
v_i^{(n)} = (H_i^T R^{-1} H_i + \alpha^{-1} (P_i^{uu})^{-1})^{-1} H_i^T R^{-1} \big( \yin - h(\uin) \big),
\end{equation}
or, in an equivalent form, 
\begin{equation}\label{eq:EKI_step2}
v_i^{(n)} = P_i^{uu} H_i^T (H_i P_i^{uu} H_i^T + \alpha^{-1} R)^{-1} \big( \yin - h(\uin) \big).
\end{equation}
We combine \eqref{eq:updateensembleLM} and \eqref{eq:EKI_step2}, substitute $P_i^{uu} H_i^T = P_i^{uy}$, and make another level of approximation $H_i  P_i^{uy} \approx P_i^{yy}$. This leads to the Ensemble Kalman Inversion (EKI) method \cite{ILS13}.
\begin{algorithm}
	\caption{Ensemble Kalman Inversion (EKI) \label{algorithm:EKI_step}}
	\vspace{0.1in}
	\STATE {\bf Input}: Initial ensemble $\{ u_0^{(n)}\}_{n=1}^N$, length-step $\alpha$.  \\ 
	\STATE For $i = 0, 1, \ldots$ do:
	\begin{enumerate}
		\item Set $K_i = P_i^{uy} (P_{i}^{yy} + \alpha^{-1} R)^{-1}. $
		\item Draw $y_i^{(n)} \sim \Nc(y, \alpha^{-1} R)$ and set
		\begin{equation}\label{eq:EKI_update}
		u_{i+1}^{(n)} = u_i^{(n)}+ K_i \Bigl\{ y_i^{(n)}  - h(u_i^{(n)})     \Bigr\} ,\quad \quad 1 \le n \le N. 
		\end{equation}
	\end{enumerate}
	\STATE{\bf Output}: Ensemble means $m_1, m_2, \ldots$
\end{algorithm}
\FloatBarrier
 We note that EKI is a natural ensemble-based version of the derivative-based ILM-DM Algorithm \ref{ILMDM}. However, an important difference is that the Kalman gain in ILM-DM only uses the iterates to update $H_i$ and $P$ is kept fixed. In contrast, the ensemble is used in EKI to update $P_i^{uy}$ and $P_i^{yy}$.  

\subsubsection{Analysis of EKI}
In view of the definition of $K_i$, we can rewrite the update \eqref{eq:EKI_update} as a time-stepping scheme:
\begin{equation}\label{eq:EKI_update_cont}
\begin{split}
u_{i+1}^{(n)} &= u_i^{(n)}+ \alpha P_i^{uy} (\alpha P_i^{yy} +R)^{-1} \bigl( y + \alpha^{-1/2} R^{1/2} \xiin - h(u_i^{(n)}) \bigr)\\
&= u_i^{(n)}+ \alpha P_i^{uy} (\alpha P_i^{yy} +R)^{-1} \bigl( y - h(u_i^{(n)}) \bigr) + \alpha^{1/2}P_i^{uy}(\alpha P_i^{yy} +R)^{-1} R^{1/2} \xiin,
\end{split}
\end{equation}
where $\xiin \sim \Nc(0, I)$ are independent.
Taking the limit $\alpha \rightarrow 0$, we interpret \eqref{eq:EKI_update_cont} as a discretization of the SDE system
\begin{equation}\label{eq:EKI_cont}
\diff u^{(n)} = P^{uy}(t) R^{-1} \bigl( y - h(u^{(n)}) \bigr) \diff t + P^{uy}(t) R^{-1/2} \diff W^{(n)}.
\end{equation}
If $h(u) = Hu$ is linear, the SDE system \eqref{eq:EKI_cont} turns into
\begin{equation}\label{eq:EKI_cont_linear}
\diff u^{(n)} = P^{uu}(t) H^T R^{-1} \big( y - Hu^{(n)} \big) \diff t + P^{uu}(t) H^T R^{-1/2} \diff W^{(n)} .
\end{equation}
\begin{proposition} \label{prop:EKI}
	For the SDE system \eqref{eq:EKI_cont}, assume $h(u) = Hu$ is linear, and suppose that the initial ensemble $\{ u_0^{(n)}\}_{n=1}^N$ is drawn independently from a continuous distribution with finite second moments. Then, in the large ensemble size limit $N \rightarrow \infty$ (mean-field),  the distribution of $ u^{(n)}(t)$ has mean $\mf(t)$ and covariance $\Cf(t)$, which satisfy
	\begin{align}
	\frac{\diff \mf(t)}{\diff t} &= \Cf(t) H^T R^{-1} \bigl(y - H \mf(t)\bigr), \label{eq:EKI_mean_evolve}\\
	\frac{\diff \Cf(t)}{\diff t} &= -\Cf(t) H^T R^{-1} H \Cf(t). \label{eq:EKI_cov_evolve}
	\end{align}
	Furthermore, the solution can be computed analytically:
	\begin{align}
	\mf(t) &= \Big( \Cf(0)^{-1} + t H^T R^{-1} H \Big)^{-1} \big( \Cf(0)^{-1} \mf(0) + t H^T R^{-1} y \big), \label{eq:EKI_mean_sol}\\
	\Cf(t) &= \Big( \Cf(0)^{-1} + t H^T R^{-1} H \Big)^{-1}. \label{eq:EKI_cov_sol}
	\end{align}
	 In particular, if $H \in \R^{k \times d}$ has full column rank (i.e., $d\le k$, ${\emph{rank}(}H) = d$), then, as $t \rightarrow \infty$, 
	\begin{align}
	\mf(t) &\rightarrow (H^T R^{-1} H)^{-1} (H^T R^{-1} y), \label{eq:EKI_mean_limit}\\
	\Cf(t) &\rightarrow 0. \label{eq:EKI_cov_limit}
	\end{align}
	If $H \in \R^{k \times d}$ has full row rank (i.e., $d\ge k$, $\emph{rank}(H) = k$), then, as $t \rightarrow \infty,$
	\begin{align}
	H \mf(t) &\rightarrow y, \label{eq:EKI_mean_limit2}\\
	H \Cf(t) H^T &\rightarrow 0. \label{eq:EKI_cov_limit2}
	\end{align}
\end{proposition}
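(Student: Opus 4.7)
The plan is to first derive the evolution equations for the mean and covariance under the mean-field (large $N$) limit, then solve these ODEs in closed form, and finally extract the two long-time limits using the full rank hypotheses on $H$.

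\textbf{Step 1: Mean-field limit and Itô calculation.} As $N\to\infty$ with i.i.d.\ initial conditions, the empirical covariance $P^{uu}(t)$ converges to the population covariance $\Cf(t)$ of $u^{(n)}(t)$, and a propagation-of-chaos argument turns \eqref{eq:EKI_cont_linear} into the McKean--Vlasov SDE
\begin{equation*}
\diff u^{(n)} = \Cf(t)H^TR^{-1}\bigl(y-Hu^{(n)}\bigr)\diff t + \Cf(t)H^TR^{-1/2}\diff W^{(n)}.
\end{equation*}
Taking expectations immediately yields \eqref{eq:EKI_mean_evolve}. For the covariance, I would apply Itô's formula to $(u^{(n)}-\mf)(u^{(n)}-\mf)^T$: the two linear drift contributions each produce $-\Cf H^TR^{-1}H\Cf$, while the diffusion (quadratic variation) contributes $+\Cf H^TR^{-1}H\Cf$, giving the Riccati equation \eqref{eq:EKI_cov_evolve} after summing.

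\textbf{Step 2: Closed-form solution.} Setting $\mathcal{D}(t):=\Cf(t)^{-1}$ and using $\diff\mathcal{D}/\diff t = -\Cf^{-1}(\diff\Cf/\diff t)\Cf^{-1}$, the Riccati equation reduces to the linear ODE $\diff\mathcal{D}/\diff t = H^TR^{-1}H$, whose solution is $\mathcal{D}(t)=\Cf(0)^{-1}+tH^TR^{-1}H$, giving \eqref{eq:EKI_cov_sol}. For the mean, I would compute
\begin{equation*}
\frac{\diff}{\diff t}\bigl(\mathcal{D}(t)\mf(t)\bigr) = H^TR^{-1}H\mf(t)+H^TR^{-1}\bigl(y-H\mf(t)\bigr) = H^TR^{-1}y,
\end{equation*}
so that $\mathcal{D}(t)\mf(t)=\mathcal{D}(0)\mf(0)+tH^TR^{-1}y$, which upon inversion gives \eqref{eq:EKI_mean_sol}.

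\textbf{Step 3: Large-time limits.} In the full column rank case, $H^TR^{-1}H$ is positive definite, so $\mathcal{D}(t)/t\to H^TR^{-1}H$ and therefore $\Cf(t)=O(1/t)\to 0$; similarly $\mf(t)\to (H^TR^{-1}H)^{-1}H^TR^{-1}y$, establishing \eqref{eq:EKI_mean_limit}--\eqref{eq:EKI_cov_limit}. The full row rank case is the more delicate step, because $H^TR^{-1}H$ is singular when $d>k$ and $\Cf(t)$ does \emph{not} tend to zero. Here I would apply the Woodbury identity to \eqref{eq:EKI_cov_sol} to obtain
\begin{equation*}
\Cf(t) = \Cf(0) - \Cf(0)H^T\bigl(t^{-1}R + H\Cf(0)H^T\bigr)^{-1}H\Cf(0),
\end{equation*}
and use the auxiliary identity $\Cf(t)H^TR^{-1}=\Cf(0)H^T(R+tH\Cf(0)H^T)^{-1}$ (verified by multiplying through by $\mathcal{D}(t)$). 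Since $H\Cf(0)H^T$ is invertible under the full row rank assumption, sending $t\to\infty$ gives
\begin{equation*}
H\Cf(t)H^T \to H\Cf(0)H^T - H\Cf(0)H^T\bigl(H\Cf(0)H^T\bigr)^{-1}H\Cf(0)H^T = 0,
\end{equation*}
proving \eqref{eq:EKI_cov_limit2}. Substituting these expressions into \eqref{eq:EKI_mean_sol} yields $\mf(t)\to\mf(0)+\Cf(0)H^T(H\Cf(0)H^T)^{-1}(y-H\mf(0))$, and premultiplying by $H$ gives $H\mf(t)\to y$, which is \eqref{eq:EKI_mean_limit2}.

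\textbf{Anticipated difficulty.} The main obstacle is Step 1, specifically making rigorous the passage from the interacting particle system to the mean-field McKean--Vlasov equation; a fully rigorous treatment would require propagation-of-chaos estimates in the spirit of Sznitman, which appears beyond the scope intended here, so I would present Step 1 as a formal derivation (consistent with the word ``formally'' used earlier in the paper). The only other subtle point is recognizing in Step 3 that the full row rank case requires the Woodbury reformulation rather than a direct inversion, since the $t\to\infty$ limit of $\mathcal{D}(t)$ is rank-deficient.
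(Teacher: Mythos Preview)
Your proposal is correct and Steps 1 and 2, as well as the full column rank part of Step 3, match the paper's argument essentially line for line (same It\^o computation, same $\Cf^{-1}$ trick, same $\diff(\Cf^{-1}\mf)/\diff t$ calculation).

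The full row rank case is where you diverge. The paper does not use Woodbury on the closed-form solution; instead it introduces the change of variables $\widetilde{\mf}(t)=R^{-1/2}H\mf(t)$ and $\widetilde{\Cf}(t)=R^{-1/2}H\Cf(t)H^TR^{-1/2}$, observes that these satisfy the reduced system $\diff\widetilde{\mf}/\diff t=\widetilde{\Cf}(R^{-1/2}y-\widetilde{\mf})$ and $\diff\widetilde{\Cf}/\diff t=-\widetilde{\Cf}^2$, and re-solves this $k$-dimensional system (where $\widetilde{\Cf}(0)$ is now invertible) by the same inversion trick. Your Woodbury route works equally well and is arguably more direct, since it stays with the explicit formulas already derived and avoids re-solving an ODE; the paper's substitution is a bit cleaner conceptually because it makes transparent that the full row rank case is just the full column rank case in the $k$-dimensional image space (with $H$ replaced by the identity). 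Both approaches hinge on the same invertibility of $H\Cf(0)H^T$, so neither gains generality over the other. Your remark that the mean-field passage is only formal is also consistent with how the paper treats it.
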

\begin{proof}
The proof technique is similar to \cite{GHLS19}.  We will use that
\begin{align*}
\mf(t) &= \lim_{N\to \infty} \Expect \bigl[ u^{(n)}(t) \bigr], \\
\Cf(t) & = \lim_{N\to \infty}   \Expect \bigl[ e^{(n)}(t) \otimes e^{(n)}(t) \bigr],
\end{align*}
where $e^{(n)}(t) := u^{(n)}(t) - \mf(t).$
First, note that  \eqref{eq:EKI_mean_evolve} follows directly from \eqref{eq:EKI_cont_linear} using that in the mean field limit $P^{uu}(t)$ can be replaced by $\Cf(t)$. To obtain the evolution of $\Cf(t)$, note that
\begin{equation*}
	\diff \Cf(t) = \lim_{N\to \infty}   \Expect \Bigl[ \diff e^{(n)} \otimes e^{(n)} + e^{(n)} \otimes \diff e^{(n)} + \diff e^{(n)} \otimes \diff e^{(n)}\Bigr],
\end{equation*}
where the last term accounts for the It\^{o} correction. This simplifies as
\begin{equation*}
\begin{split}
	\frac{\diff \Cf(t)}{\diff t} &= \lim_{N\to \infty}   \Expect \Bigl[ -\Cf(t) H^T R^{-1} H (e^{(n)} \otimes e^{(n)}) - (e^{(n)} \otimes e^{(n)}) H^T R^{-1} H \Cf(t) + \Cf(t) H^T R^{-1} H \Cf(t) \Bigr]\\
								 &= -\Cf(t) H^T R^{-1} H \Cf(t),
\end{split}
\end{equation*}
which gives Equation \eqref{eq:EKI_cov_evolve}. To derive exact formulas for $\mf(t)$ and $\Cf(t)$, we notice that
\begin{equation*}
\frac{\diff \Cf(t)^{-1}}{\diff t} = - \Cf(t)^{-1} \frac{\diff \Cf(t)}{\diff t} \Cf(t)^{-1} = H^T R^{-1} H,
\end{equation*}
and
\begin{equation*}
\frac{\diff \big( \Cf(t)^{-1} \mf(t) \big)}{\diff t} = \frac{\diff \Cf(t)^{-1}}{\diff t} \mf(t) + \Cf(t)^{-1} \frac{\diff \mf(t)}{\diff t} = H^T R^{-1} y.
\end{equation*}
Then \eqref{eq:EKI_mean_sol} and \eqref{eq:EKI_cov_sol} follow easily.

 If $H$ has full column rank, then $H^T R^{-1} H$ is invertible and therefore, as $t\rightarrow \infty$, $$\Cf(t) = t^{-1} \big( t^{-1} \Cf(0)^{-1} + H^T R^{-1} H \big)^{-1} \rightarrow  0$$
by continuity of the matrix inverse function. The limit of $\mf (t)$, \eqref{eq:EKI_mean_limit}, follows immediately from \eqref{eq:EKI_mean_sol}. 

If $H$ has full row rank, we make the following substitutions
\begin{equation*}
	\widetilde{\mf}(t) = R^{-1/2} H \mf(t), \quad\quad\quad \widetilde{\Cf}(t) = R^{-1/2} H \Cf(t) H^T R^{-1/2}.
\end{equation*}
Then \eqref{eq:EKI_mean_evolve} and \eqref{eq:EKI_cov_evolve} can be transformed into
\begin{align}
	\frac{\diff \widetilde{\mf}(t)}{\diff t} &= \widetilde{\Cf}(t)  \bigl(R^{-1/2} y -  \widetilde{\mf}(t) \bigr), \label{eq:EKI_tr_mean_evolve}\\
	\frac{\diff \widetilde{\Cf}(t)}{\diff t} &= -\widetilde{\Cf}(t)^2. \label{eq:EKI_tr_cov_evolve}
\end{align}
Using the fact that $\widetilde{\Cf}(0) =R^{-1/2} H \Cf(0) H^T R^{-1/2}$ is invertible, we can solve these using the same technique as in the previous case:
\begin{equation*}
	\widetilde{\Cf}(t) = \big( \widetilde{\Cf}(0)^{-1} + t \big)^{-1}, \quad\quad\quad \widetilde{\mf}(t) = \big( \widetilde{\Cf}(0)^{-1} + t \big)^{-1} \bigl(\widetilde{\Cf}(0)^{-1} \widetilde{\mf}(0) + t R^{-1/2}y\bigr).
\end{equation*}
As $t\rightarrow \infty$, we have $\widetilde{\mf}(t)\rightarrow R^{-1/2}y$ and $\widetilde{\Cf}(t) \rightarrow 0$, which lead to \eqref{eq:EKI_mean_limit2} and \eqref{eq:EKI_cov_limit2}.
\end{proof}

\begin{remark} Some remarks:
	\begin{enumerate}
	 \item In fact, \eqref{eq:EKI_cov_limit2} always holds, without rank constraints on $H$. The proof follows the similar idea as in \cite{SS17}. This can be viewed from Equation \eqref{eq:EKI_tr_cov_evolve}, where we can perform eigenvalue decomposition $\widetilde{\Cf}(0) = X \Lambda(0) X^T$, and show that $\widetilde{\Cf}(t) = X \Lambda(t) X^T$, where $\Lambda(t)$ are diagonal matrices and $\Lambda(t) \rightarrow 0$ as $t \rightarrow \infty$. The statement that $H \Cf(t) H^T \rightarrow 0$ (or $\Cf(t) \rightarrow 0$ if $H$ has full column rank) is referred to as `ensemble collapse'. This can be interpreted as `the images of all the particles under $H$ collapse to a single point as time evolves'. Our numerical results in Section \ref{sec:num} show that the ensemble collapse phenomenon of Algorithm \ref{algorithm:EKI_step} is also observed in a variety of nonlinear examples and outside the mean-field limit, with moderate ensemble size $N$.
	These empirical results  justify the practical significance of the linear continuum analysis in Proposition \ref{prop:EKI}.
		\item Under the same setting of Proposition \ref{prop:EKI}, if we further require that the initial ensemble $\{ u_0^{(n)}\}_{n=1}^N$ is drawn independently from the prior distribution $\Nc(m, P)$, then in the mean-field limit $N \rightarrow \infty$ we have $\mf(0)=m$ and $\Cf(0)=P$, leading to
		\begin{equation*}
		\mf(1) = (P^{-1} + H^T R^{-1} H)^{-1} (P^{-1} m + H^T R^{-1} y), \quad \quad \Cf(1) = (P^{-1} + H^T R^{-1} H)^{-1},
		\end{equation*}
		which are the true posterior mean and covariance, respectively. However, we have observed in a variety of numerical examples (not reported here) that in nonlinear problems it is often necessary to run EKI up to times larger than $1$ to obtain adequate approximation of the posterior mean and covariance. Providing a suitable stopping criteria for EKI is a topic of current research \cite{SS18,IY20} beyond the scope of our work.  \label{rm: Bayesian_interpret}
		
		
	\end{enumerate}
\end{remark}

\subsection{Ensemble Levenberg-Marquardt Optimization of Tikhonov-Phillips Objective}\label{ssec:TEKI}
\subsubsection{Tikhonov Ensemble Kalman Inversion}
Recall that we define
$$z := \begin{bmatrix}
y\\ m
\end{bmatrix},  \quad \quad \quad 
g(u) := \begin{bmatrix}
h(u)\\ u
\end{bmatrix},
\quad \quad \quad
Q := 
\begin{bmatrix}
R & 0 \\ 0 & P
\end{bmatrix}.
$$
Then, given an ensemble $\{u_i^{(n)} \}_{n=1}^N$, we can define $$g_i = \frac{1}{N} \sum_{n=1}^N g(u_i^{(n)}),$$ and empirical covariances
\begin{align*}
P_i^{zz} &= \frac{1}{N} \sum_{n=1}^N \bigl( g(u_i^{(n)}) - g_i  \bigr)   \bigl( g(u_i^{(n)}) - g_i  \bigr)^T,\\
P_i^{uz} &= \frac{1}{N} \sum_{n=1}^N \bigl( u_i^{(n)} - m_i  \bigr)   \bigl( g(u_i^{(n)}) - g_i  \bigr)^T.
\end{align*}
Furthermore, we define the statistical linearization $G_i$:
\begin{equation}
g'(\uin)
\approx
(P_i^{uz})^T (P_i^{uu})^{-1}
=: G_i.
\end{equation}
It is not hard to check that
\begin{equation*}
G_i = \begin{bmatrix}
H_i \\ I
\end{bmatrix},
\end{equation*}
with $H_i$ defined in \eqref{eq:stat_linearization}. 

Given an ensemble $\{u_i^{(n)} \}_{n=1}^N$, consider the following Levenberg-Marquardt update for each $n$:
\begin{equation*}
u_{i+1}^{(n)} = u_i^{(n)} + v_i^{(n)},
\end{equation*}
where $v_i^{(n)}$ is the minimizer of the following regularized (linearized) Tikhonov-Phillips objective  (cf. equation \eqref{eq:LMTP_obj}) 
\begin{equation}\label{eq:TEKI_new_C}
\Juctpn \hspace{-0.2cm}(v) = \frac12 \bigl| z_i^{(n)} - g(u_i^{(n)} )  - G_i v \bigr|^2_Q  + \frac{1}{2\alpha} \bigl|v\bigr|^2_{P_i^{uu}}, \quad \quad z_i^{(n)} \sim \mathcal{N}(z, \alpha^{-1} Q),
\end{equation}
and $\alpha>0$ will be regarded as a length-step. We can calculate the minimizer $v_i^{(n)}$ explicitly, applying Lemma \ref{lemma:linear}:
\begin{equation}\label{eq:TEKI_step1}
v_i^{(n)} = (G_i^T Q^{-1} G_i + \alpha^{-1} (P_i^{uu})^{-1})^{-1} G_i^T Q^{-1} \big( \zin - g(\uin) \big),
\end{equation}
or, in an equivalent form, 
\begin{equation}\label{eq:TEKI_step2}
v_i^{(n)} = P_i^{uu} G_i^T (G_i P_i^{uu} G_i^T + \alpha^{-1} Q)^{-1} \big( \zin - g(\uin) \big).
\end{equation}
Similar to EKI, in Equation \eqref{eq:TEKI_step2} we substitute $P_i^{uu} G_i^T = P_i^{uz}$, and make the approximation $G_i P_i^{uz} \approx P_i^{zz}$. This leads to Tikhonov Ensemble Kalman Inversion (TEKI), described in Algorithm \ref{algorithm:TEKI_step}.
\begin{algorithm}
	\caption{Tikhonov Ensemble Kalman Inversion (TEKI) \label{algorithm:TEKI_step}}
	\vspace{0.1in}
	\STATE {\bf Input}: Initial ensemble $\{ u_0^{(n)}\}_{n=1}^N$, length-step $\alpha$.  \\ 
	\STATE For $i = 0, 1, \ldots$ do:
	\begin{enumerate}
		\item Set $K_i = P_i^{uz} (P_{i}^{zz} + \alpha^{-1} Q)^{-1}. $
		\item Draw $z_i^{(n)} \sim \Nc(z, \alpha^{-1} Q)$ and set
		\begin{equation}\label{eq:TEKI_update}
		u_{i+1}^{(n)} = u_i^{(n)}+ K_i \Bigl\{ z_i^{(n)}  - g(u_i^{(n)})     \Bigr\} ,\quad \quad 1 \le n \le N. 
		\end{equation}
	\end{enumerate}
	\STATE{\bf Output}: Ensemble means $m_1, m_2, \ldots$
\end{algorithm}
\FloatBarrier
 We note that TEKI is a natural ensemble-based version of the derivative-based ILM-TP Algorithm \ref{ILMTP}. However,  the Kalman gain in ILM-TP keeps $P$ fixed, while in TEKI  $P_i^{uz}$ and $P_i^{zz}$ are updated using the ensemble.  

\subsubsection{Analysis of TEKI}
When $\alpha$ is small, we can rewrite the update \eqref{eq:TEKI_update} as a time-stepping scheme:
\begin{equation}\label{eq:TEKI_update_cont}
\begin{split}
u_{i+1}^{(n)} &= u_i^{(n)}+ \alpha P_i^{uz} (\alpha P_i^{zz} +Q)^{-1} \bigl( z + \alpha^{-1/2} Q^{1/2} \xiin - g(u_i^{(n)}) \bigr)\\
&= u_i^{(n)}+ \alpha P_i^{uz} (\alpha P_i^{zz} +Q)^{-1} \bigl( z - g(u_i^{(n)}) \bigr) + \alpha^{1/2}P_i^{uz}(\alpha P_i^{zz} +Q)^{-1} Q^{1/2} \xiin,
\end{split}
\end{equation}
where $\xiin \sim \Nc(0, I_{d+k})$ are independent. Taking the limit $\alpha \rightarrow 0$, we can interpret \eqref{eq:TEKI_update_cont} as a discretization of the SDE system
\begin{equation}\label{eq:TEKI_cont}
\diff u^{(n)} = P^{uz}(t) Q^{-1} \bigl( z - g(u^{(n)}) \bigr) \diff t + P^{uz}(t) Q^{-1/2} \diff W^{(n)}.
\end{equation}
If $h(u) = Hu$ is linear, $g(u) = Gu$ is also linear and the SDE system \eqref{eq:TEKI_cont} can be rewritten as
\begin{equation}\label{eq:TEKI_cont_linear}
\begin{split}
\diff u^{(n)} &= P^{uz}(t) Q^{-1} \big( z - Gu^{(n)} \big) \diff t + P^{uz}(t) Q^{-1/2} \diff W^{(n)} \\
&= P^{uu}(t) G^T Q^{-1} \big( z - Gu^{(n)} \big) \diff t + P^{uu}(t) G^T Q^{-1/2} \diff W^{(n)} . 
\end{split}
\end{equation}
\begin{proposition}
	For the SDE system \eqref{eq:TEKI_cont}, assume $h(u) = Hu$ is linear, and that the initial ensemble $\{ u_0^{(n)}\}_{n=1}^N$ is made of independent samples from a distribution with finite second moments. Then, in the large ensemble limit (mean-field),  the distribution of $ u^{(n)}(t) $ has mean $\mf(t)$ and covariance $\Cf(t)$, which satisfy:
	\begin{align}
	\frac{\diff \mf(t)}{\diff t} &= \Cf(t) G^T Q^{-1} (z - G \mf(t)), \label{eq:TEKI_mean_evolve}\\
	\frac{\diff \Cf(t)}{\diff t} &= -\Cf(t) G^T Q^{-1} G \Cf(t). \label{eq:TEKI_cov_evolve}
	\end{align}
	Furthermore, the solution can be computed analytically:
	\begin{align}
	\mf(t) &= \Big( \Cf(0)^{-1} + t G^T Q^{-1} G \Big)^{-1} \big( \Cf(0)^{-1} \mf(0) + t G^T Q^{-1} z \big), \label{eq:TEKI_mean_sol}\\
	\Cf(t) &= \Big( \Cf(0)^{-1} + t G^T Q^{-1} G \Big)^{-1}. \label{eq:TEKI_cov_sol}
	\end{align}
	In particular, as $t \rightarrow \infty$, 
	\begin{align}
		\begin{split}
		\mf(t) &\rightarrow (G^T Q^{-1} G)^{-1} (G^T Q^{-1} z) \label{eq:TEKI_mean_limit}\\
			   &= (H^T R^{-1} H  + P)^{-1} (H^T R^{-1} y  + P ^{-1} m),  
		\end{split}\\
	\Cf(t) &\rightarrow 0. \label{eq:TEKI_cov_limit}
	\end{align}
	Notice that $\mf(t)$ converges to the true posterior mean.
\end{proposition}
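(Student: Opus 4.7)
The plan is to mirror the proof of Proposition \ref{prop:EKI}, exploiting the fact that TEKI is structurally EKI applied to the augmented least-squares problem with data $z$, forward map $g(u)=Gu$, and observation covariance $Q$. In particular, the linear SDE \eqref{eq:TEKI_cont_linear} has exactly the same form as \eqref{eq:EKI_cont_linear} with $H \leftrightarrow G$, $y \leftrightarrow z$, and $R \leftrightarrow Q$, so the argument in Proposition \ref{prop:EKI} transfers line-by-line. I will therefore focus on the derivation of the mean and covariance ODEs and on drawing the right conclusion about the $t \to \infty$ limit.

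First, I would pass to the mean-field limit in \eqref{eq:TEKI_cont_linear}, replacing $P^{uu}(t)$ by $\Cf(t)$ so that the drift becomes affine in $u^{(n)}$. Taking expectations gives \eqref{eq:TEKI_mean_evolve} directly. For \eqref{eq:TEKI_cov_evolve}, I would set $e^{(n)}(t) = u^{(n)}(t) - \mf(t)$, apply It\^{o}'s formula to $e^{(n)} \otimes e^{(n)}$, and pass to the limit $N \to \infty$. The two linear-in-$e^{(n)}$ drift contributions give $-\Cf(t) G^T Q^{-1} G\, \Cf(t)$ twice, while the quadratic variation of the diffusion coefficient $\Cf(t) G^T Q^{-1/2}$ adds back one copy of $+\Cf(t) G^T Q^{-1} G\, \Cf(t)$ via the It\^{o} correction, leaving the net $-\Cf(t) G^T Q^{-1} G\, \Cf(t)$ of \eqref{eq:TEKI_cov_evolve}.

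Next, to obtain the closed-form expressions \eqref{eq:TEKI_mean_sol} and \eqref{eq:TEKI_cov_sol}, I would pass to the evolution of $\Cf(t)^{-1}$ and $\Cf(t)^{-1} \mf(t)$: the identities
\begin{equation*}
\frac{\diff \Cf(t)^{-1}}{\diff t} = G^T Q^{-1} G,
\qquad
\frac{\diff \bigl(\Cf(t)^{-1}\mf(t)\bigr)}{\diff t} = G^T Q^{-1} z
\end{equation*}
follow from \eqref{eq:TEKI_mean_evolve}--\eqref{eq:TEKI_cov_evolve} exactly as in the EKI case, and integrating gives the formulas.

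Finally, for the $t \to \infty$ limit I would observe the key structural difference with EKI: since
\begin{equation*}
G^T Q^{-1} G \;=\; \begin{bmatrix} H^T & I \end{bmatrix} \begin{bmatrix} R^{-1} & 0 \\ 0 & P^{-1} \end{bmatrix} \begin{bmatrix} H \\ I \end{bmatrix} \;=\; H^T R^{-1} H + P^{-1},
\end{equation*}
the matrix $G^T Q^{-1} G$ is automatically symmetric positive definite because $P^{-1}$ is, with no rank assumption required on $H$. Consequently $\Cf(t) = (\Cf(0)^{-1} + t\, G^T Q^{-1} G)^{-1} \to 0$ unconditionally, and \eqref{eq:TEKI_mean_sol} yields
\begin{equation*}
\mf(t) \to (G^T Q^{-1} G)^{-1} G^T Q^{-1} z = (H^T R^{-1} H + P^{-1})^{-1}(H^T R^{-1} y + P^{-1} m),
\end{equation*}
which is exactly the posterior mean of Lemma \ref{lemma:linear}. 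The only mildly delicate step I anticipate is the It\^{o} correction in the covariance derivation; once that is handled, everything else is a routine specialization of the EKI argument and a matrix identity.
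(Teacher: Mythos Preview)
Your proposal is correct and follows essentially the same approach as the paper: the paper's proof simply says the evolution equations and closed-form solutions follow from Proposition \ref{prop:EKI} under the substitutions $H\to G$, $R\to Q$, $y\to z$, then observes that $G^T Q^{-1} G = H^T R^{-1} H + P^{-1}$ is always invertible so the $t\to\infty$ limits hold unconditionally. Your write-up is in fact more explicit about the It\^{o} correction than the paper's own argument.
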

\begin{proof}
Equations \eqref{eq:TEKI_mean_evolve}-\eqref{eq:TEKI_cov_sol} can be derived similarly as in the proof of Proposition \ref{prop:EKI}, replacing $H$ by $G$, $R$ by $Q$, and $y$ by $z$. Now since $G^T Q^{-1} G = H^T R^{-1} H + P$ is always invertible we have that,  as $t \rightarrow \infty,$
$$\Cf(t) = t^{-1} \big( t^{-1} \Cf(0)^{-1} + G^T Q^{-1} G \big)^{-1} \rightarrow  0$$
by continuity of the matrix inverse function. The limit of $\mf(t)$ in \eqref{eq:TEKI_mean_limit} follows directly from \eqref{eq:TEKI_mean_sol}.
\end{proof}


\section{Ensemble Kalman methods: New Variants}
\label{sec:EnsembleKalmanLearningNew}
In the previous section we discussed three popular subfamilies of iterative ensemble Kalman methods, analogous to the derivative-based algorithms in Section \ref{sec:background}. The aim of this section is to introduce two new iterative ensemble Kalman methods which are inspired by the SDE continuum limit structure of the algorithms in Section \ref{sec:EnsembleKalmanLearning}. The two new methods that we introduce have in common that they rely on statistical linearization, and that the long-time limit of the ensemble covariance recovers the posterior covariance in a linear setting. Subsection \ref{sec:IEKS} contains a new variant of IEKF which in addition to recovering the posterior covariance, it also recovers the posterior mean in the long-time limit. Subsection \ref{sec:EKIN} introduces a new variant of the EKI method. Finally, Subsection \ref{ssec:gradientstructure} highlights the gradient structure of the algorithms in this and the previous section, shows that our new variant of IEKF can also be interpreted as a modified TEKI algorithm, and sets our proposed new methods into the broader literature. 

\subsection{Iterative Ensemble Kalman Filter with Statistical Linearization}\label{sec:IEKS}
In some of the literature on iterative ensemble Kalman methods \cite{RRZL06, SU12}, the length-step ($\alpha$ in Algorithms \ref{itenKF} and \ref{itenKFalt}) is set to be 1. Although this choice of length-step allows to recover the true posterior mean in the linear case after one iteration (see Section \ref{sec:analysis_IEKF}), it leads to numerical instability in complex nonlinear models. Alternative ways to set $\alpha$ include performing a line-search that satisfies Wolfe's condition, or using other ad-hoc line-search criteria \cite{GO07}. These methods allow $\alpha$ to be adaptively chosen throughout the iterations, but they introduce other hyperparameters that need to be selected manually.

Our idea here is to slightly modify Algorithm \ref{itenKF} so that in the linear case its continuum limit has the true posterior as its invariant distribution. In this way we can simply choose a small enough $\alpha$ and run the algorithm until the iterates reach a statistical equilibrium, avoiding the need to specify suitable hyperparameters and stopping criteria. Our empirical results show that this approach also performs well in the nonlinear case.

In the update Equation \eqref{eq:IEKF_update}, we replace each of the $u_0^{(n)}$ by a perturbation of the prior mean $m$, and we replace $P_0^{uu}$ by the prior covariance matrix $P$ in the definition of $K_i$. Details can be found below in our modified algorithm, which we call IEKF-SL.
\begin{algorithm}
	\caption{Iterative Ensemble Kalman Filter with Statistical Linearization (IEKF-SL) \label{algorithm:IEKF_new}}
	\vspace{0.1in}
	\STATE {\bf Input}: Initial ensemble $\{ u_0^{(n)}\}_{n=1}^N$, step size $\alpha$.  \\ 
	\STATE For $i = 0, 1, \ldots$ do:
	\begin{enumerate}
		\item Set $K_i = P H_i^T (H_i P H_i^T + R)^{-1}, \quad \quad H_i = (P_i^{uy})^T (P_i^{uu})^{-1}$.
		\item Draw $y_i^{(n)} \sim \Nc(y, 2\alpha^{-1} R)$, $m_i^{(n)} \sim \Nc(m, 2\alpha^{-1} P)$ and set
		\begin{equation}\label{eq:IEKF_new_update}
			u_{i+1}^{(n)} = u_i^{(n)}+ \alpha \Bigl\{ K_i \big( y_i^{(n)}  - h(u_i^{(n)}) \big) + (I - K_i H_i)  \big( m_i^{(n)} - \uin \big)   \Bigr\} ,\quad \quad 1 \le n \le N. 
		\end{equation}
	\end{enumerate}
	\STATE{\bf Output}: Ensemble means $m_1, m_2, \ldots$
\end{algorithm}
\FloatBarrier

It is natural to regard the update \eqref{eq:IEKF_new_update} as a time-stepping scheme. We rewrite it in an alternative form, in analogy to \eqref{eq:IEKF_step1}:
\begin{equation}\label{eq:IEKF_new_update_cont}
	\begin{split}
		u_{i+1}^{(n)} &= \uin + \alpha C_i \Bigl\{ H_i^T R^{-1} \big( \yin - h(\uin) \big) + P^{-1} \big( m_i^{(n)} - \uin \big) \Bigr\} \\
		&= \uin + \alpha C_i \Bigl\{ H_i^T R^{-1} \big( y - h(\uin) \big) + P^{-1} \big( m - \uin \big) + (H_i^T R^{-1} \zeta + P^{-1}\eta) \Bigr\},
	\end{split}
\end{equation}
where 
\begin{equation*}
	C_i = \big( H_i^T R^{-1} H_i + P^{-1} \big)^{-1}, \quad \zeta \sim \Nc(0, 2\alpha^{-1} R), \quad \eta \sim \Nc(0, 2\alpha^{-1} P).
\end{equation*}
We interpret Equation \eqref{eq:IEKF_new_update_cont}  as a discretization of the SDE system
\begin{equation}\label{eq:IEKF_new_cont}
	\diff u^{(n)} = C(t) \Big( H(t)^T R^{-1} \big( y - h(u^{(n)}) \big) + P^{-1} \big( m - u^{(n)} \big) \Big) \diff t + \sqrt{2 C(t)} \diff W^{(n)},
\end{equation}
where
\begin{align*}
	H(t) &= \big( P^{uy}(t) \big)^T \big( P^{uu}(t) \big)^{-1}, \\
	C(t) &= \big(  H(t)^T R^{-1} H(t) + P^{-1} \big)^{-1}.
\end{align*}
The diffusion term can be derived using the fact that
\begin{equation*}
	C_i(H_i^T R^{-1} \zeta + P^{-1}\eta) \sim \Nc\big(0, 2\alpha^{-1} C_i (H_i^T R^{-1} H_i + P^{-1}) C_i \big)  = \Nc (0, 2\alpha^{-1} C_i).
\end{equation*}

If $h(u)=Hu$ is linear and the empirical covariance $P^{uu}(t)$ has full rank for all $t$, then $H(t) \equiv H$,  $C(t) \equiv C = (P^{-1} + H^T R^{-1} H)^{-1}$, and the SDE system can be decoupled and further simplified:
\begin{equation}\label{eq:IEKF_new_cont_linear}
	\begin{split}
		\diff u^{(n)} &= C \Big( H^T R^{-1} \big( y - Hu^{(n)} \big) + P^{-1} (m - u^{(n)}) \Big)\diff t + \sqrt{2 C} \diff W\\
		&= \Big( -u^{(n)} + C(H^T R^{-1} y + P^{-1}m) \Big) \diff t + \sqrt{2 C} \diff W .
	\end{split}
\end{equation}
\begin{proposition}
	For the SDE system \eqref{eq:IEKF_new_cont}, assume $h(u)=Hu$ is linear and $H(t) \equiv H$ holds. Assume that the initial ensemble $\{ u_0^{(n)}\}_{n=1}^N$ is made of independent samples from a continuous distribution with finite second moments. Then, for $1\le n \le N$, the mean $\mf(t)$ and covariance $\Cf(t)$  of $u^{(n)}(t)$ satisfy
	\begin{align}
		\frac{\diff \mf(t)}{\diff t} &= -\mf(t) + C ( H^T R^{-1} y + P^{-1} m), \label{eq:IEKF_mean_evolve}\\
		\frac{\diff \Cf(t)}{\diff t} &= - 2 \Cf(t) + 2 C. \label{eq:IEKF_cov_evolve}
	\end{align}
	Furthermore, as $t\rightarrow \infty$,
	\begin{align}
		\begin{split}
			\mf(t) &\rightarrow C (H^T R^{-1} y + P^{-1} m) \label{eq:IEKF_mean_limit}\\
			&= (P^{-1} + H^T R^{-1} H)^{-1} (H^T R^{-1} y + P^{-1} m),  
		\end{split}\\
		\Cf(t) &\rightarrow C = (P^{-1} + H^T R^{-1} H)^{-1}. \label{eq:IEKF_cov_limit}
	\end{align}
	In other words, $\mf(t)$ and $\Cf(t)$ converge to the true posterior mean and covariance, respectively.
\end{proposition}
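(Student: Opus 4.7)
The plan is to exploit the fact that, once $H(t)\equiv H$ is fixed, equation \eqref{eq:IEKF_new_cont_linear} is a linear Ornstein-Uhlenbeck SDE with constant drift matrix $-I$, constant forcing $C(H^T R^{-1} y + P^{-1} m)$, and constant diffusion $\sqrt{2C}$. The entire analysis reduces to extracting first and second moment equations from this SDE and then solving two linear matrix ODEs.

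First I would derive \eqref{eq:IEKF_mean_evolve}. Taking expectations in \eqref{eq:IEKF_new_cont_linear}, the martingale term vanishes and, since the drift is affine in $u^{(n)}$, linearity of expectation yields
\begin{equation*}
\frac{d \mf(t)}{dt} = -\mf(t) + C(H^T R^{-1} y + P^{-1} m).
\end{equation*}
This is a linear ODE with explicit solution $\mf(t) = e^{-t}\mf(0) + (1-e^{-t}) C(H^T R^{-1} y + P^{-1} m)$, and letting $t\to\infty$ gives \eqref{eq:IEKF_mean_limit}, which coincides with the posterior mean formula \eqref{eq:precisionmean}.

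Next I would derive \eqref{eq:IEKF_cov_evolve} using It\^{o}'s formula on $e^{(n)}(t) \otimes e^{(n)}(t)$, with $e^{(n)}(t):= u^{(n)}(t) - \mf(t)$. Subtracting the mean ODE from \eqref{eq:IEKF_new_cont_linear} gives
\begin{equation*}
d e^{(n)} = -e^{(n)} \, dt + \sqrt{2C} \, dW^{(n)}.
\end{equation*}
Applying It\^{o}'s product rule, the drift contributes $-2\,e^{(n)} \otimes e^{(n)}$ (symmetrizing the two cross terms), and the quadratic variation contributes the It\^{o} correction $2C$. Taking expectations yields
\begin{equation*}
\frac{d \Cf(t)}{dt} = -2 \Cf(t) + 2 C,
\end{equation*}
which is \eqref{eq:IEKF_cov_evolve}. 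Its explicit solution is $\Cf(t) = e^{-2t}\Cf(0) + (1-e^{-2t})C$, so $\Cf(t) \to C = (P^{-1} + H^T R^{-1} H)^{-1}$, giving \eqref{eq:IEKF_cov_limit}. Equality with the Bayesian posterior covariance follows from Lemma \ref{lemma:linear}, specifically formula \eqref{eq:precision}.

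There is no significant analytical obstacle here; the main subtlety is bookkeeping in the It\^{o} calculation and justifying that the statistical linearization term $H(t)$ genuinely reduces to the constant $H$, which is assumed in the hypothesis $H(t)\equiv H$. One could note in a brief remark that this hypothesis is automatic when $P^{uu}(t)$ has full rank because $P^{uy}(t) = P^{uu}(t)H^T$ in the linear case, so $H(t) = (P^{uu}(t)H^T)^T (P^{uu}(t))^{-1} = H$. Given this reduction, the proof becomes a direct application of standard linear SDE machinery.
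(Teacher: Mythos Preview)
Your proposal is correct and follows essentially the same approach as the paper's proof: both take expectations in \eqref{eq:IEKF_new_cont_linear} for the mean, apply It\^{o}'s formula for the covariance, and read off the limits from the resulting linear ODEs. Your version is simply more explicit, writing out the SDE for $e^{(n)}$, the It\^{o} correction term, and the closed-form solutions $e^{-t}\mf(0) + (1-e^{-t})\mu$ and $e^{-2t}\Cf(0) + (1-e^{-2t})C$, whereas the paper leaves these as straightforward; the paper also briefly notes that the particles are i.i.d.\ once the system decouples, which you use implicitly.
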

\begin{proof}
	It is clear that, for fixed $t>0,$ $\{u^{(n)}(t)\}_{n=1}^N$ are independent and identically distributed. The evolution  of the mean follows directly from \eqref{eq:IEKF_new_cont_linear}, and the evolution of the covariance follows from \eqref{eq:IEKF_new_cont_linear} by applying It\^{o}'s formula. It is then straightforward to derive \eqref{eq:IEKF_mean_limit} and \eqref{eq:IEKF_cov_limit} from \eqref{eq:IEKF_mean_evolve} and \eqref{eq:IEKF_cov_evolve}, respectively. 
\end{proof}

\subsection{Ensemble Kalman Inversion with Statistical Linearization}\label{sec:EKIN}

Recall that in the formulation of EKI, we define a regularized (linearized) data-misfit objective \eqref{eq:EKI_new_C}, where we have a regularizer on $v$ with respect to the norm $|\cdot|_{P_i^{uu}}$. However, in view of Proposition \ref{prop:EKI}, under a linear forward model $h(u) = Hu$, the particles $\{u_i^{(n)}\}_{i=1}^n$ will `collapse', meaning that the empirical covariance of $\{H u_i^{(n)}\}_{n=1}^N$ will vanish in the large $i$ limit. One possible solution to this is `covariance inflation', namely to inject certain amount of random noise after each ensemble update. However, this requires ad-hoc tuning of additional hyperparameters. An alternative approach to avoid the ensemble collapse is to modify the regularization term in the Levenberg-Marquardt formulation \eqref{eq:EKI_new_C}. The rough idea is to consider another regularizer on $H_i v$ in the data space, as we describe in what follows. 

We define a new regularized data-misfit objective, slightly different from \eqref{eq:EKI_new_C}:
\begin{equation}\label{eq:EKI_newer_C}
	\Jucdmn \hspace{-0.2cm}(v) = \frac12 \bigl| y_i^{(n)} - h(u_i^{(n)} )  - H_i v \bigr|^2_R  + \frac{1}{2\alpha} \bigl|v\bigr|^2_{C_i} \quad \quad y_i^{(n)} \sim \mathcal{N}(y, 2 \alpha^{-1} R),
\end{equation}
where
\begin{equation}\label{eq:EKIN-Si}
	C_i = (P^{-1} + H_i^T R^{-1} H_i)^{-1},
\end{equation}
and $H_i$ is defined in \eqref{eq:stat_linearization}. The regularization term can be decomposed as
\begin{equation*}
	|v|_{C_i}^2 = |v|_P^2 + |H_i v|_R^2.
\end{equation*}
The first term can be regarded as a regularization on $v$ with respect to the prior covariance $P$. The second term can be regarded as a regularization on $H_i v$ with respect to the noise covariance $R$. 
Applying Lemma \ref{lemma:linear}, we can calculate the minimizer of \eqref{eq:EKI_newer_C}:
\begin{equation}\label{eq:EKI_new_update_cont}
	\begin{split}
		\vin &= (H_i^T R^{-1} H_i + \alpha^{-1} C_i^{-1})^{-1} H_i^T R^{-1} \big( \yin - h(\uin) \big)\\
		&= \big( \alpha^{-1} P^{-1} + (1 + \alpha^{-1}) H_i^T R^{-1} H_i \big)^{-1} H_i^T R^{-1} \big( y_i^{(n)} - h(u_i^{(n)}) \big)\\
		&= \alpha P H_i^T \big( (1 + \alpha) H_i P H_i^T + R \big)^{-1} \big( y_i^{(n)} - h(u_i^{(n)}) \big),
	\end{split}
\end{equation}
where the second equality follows from the definition of $C_i$, and the third equality follows from the matrix identity \eqref{eq:kalmangain}. This leads to Algorithm \ref{algorithm:EKI_new}.
\begin{algorithm}
	\caption{Ensemble Kalman Inversion with Statistical Linearization (EKI-SL) \label{algorithm:EKI_new}}
	\vspace{0.1in}
	\STATE {\bf Input}: Initial ensemble $\{ u_0^{(n)}\}_{n=1}^N$, step size $\alpha$.  \\ 
	\STATE For $i = 0, 1, \ldots$ do:
	\begin{enumerate}
		\item Set $K_i = \alpha P H_i^T \big( (1 + \alpha) H_i P H_i^T + R \big)^{-1}, \quad \quad H_i = (P_i^{uy})^T (P_i^{uu})^{-1}$.
		\item Draw $y_i^{(n)} \sim \Nc(y, 2\alpha^{-1} R)$ and set
		\begin{equation}\label{eq:EKI_new_update}
			u_{i+1}^{(n)} = u_i^{(n)}+ K_i \Bigl\{ y_i^{(n)}  - h(u_i^{(n)})     \Bigr\} ,\quad \quad 1 \le n \le N. 
		\end{equation}
	\end{enumerate}
	\STATE{\bf Output}: Ensemble means $m_1, m_2, \ldots$
\end{algorithm}
\FloatBarrier
For small $\alpha > 0$, we interpret the update \eqref{eq:EKI_new_update} as a discretization of the coupled SDE system
\begin{equation}\label{eq:EKI_new_cont}
	\begin{split}
		\diff u^{(n)} &= P H(t)^T \big( H(t) P H(t)^T + R \big)^{-1} \Big( \big( y - h(u^{(n)}) \big) \diff t + \sqrt{2} R^{1/2} \diff W \Big) \\
		&= C(t) H(t)^T R^{-1} \big( y - h(u^{(n)}) \big) \diff t + \sqrt{2} C(t) H(t)^T R^{-1/2} \diff W,
	\end{split}
\end{equation}
where
\begin{align*}
	H(t) &= \big( P^{uy}(t) \big)^T \big( P^{uu}(t) \big)^{-1},\\
	C(t) &= \big( P^{-1} + H(t)^T R^{-1} H(t) \big)^{-1}.
\end{align*}
For our next result we will work under the assumption that $H(t) \equiv H$ is constant, which in particular requires that 
the empirical covariance $P^{uu}(t)$ has full rank for all $t.$
Importantly, under this assumption $C(t) \equiv C = (P^{-1} + H^T R^{-1} H)^{-1}$and the SDE system is decoupled:
\begin{equation}\label{eq:EKI_new_cont_linear}
	\diff u^{(n)} = C H^T R^{-1} \big( y - Hu^{(n)} \big) \diff t + \sqrt{2} C H^T R^{-1/2} \diff W^{(n)}.
\end{equation}
\begin{proposition}\label{prop:EKIN}
	For the SDE system \eqref{eq:EKI_new_cont}, assume $h(u)=Hu$ is linear and $H(t) \equiv H$ holds. Assume that the initial ensemble $\{ u_0^{(n)}\}_{n=1}^N$ is made of independent samples from a continuous distribution with finite second moments. Then   the mean $\mf(t)$ and covariance $\Cf(t)$ of  $u^{(n)}(t)$ satisfy
	\begin{align}
		\frac{\diff \mf(t)}{\diff t} &= C H^T R^{-1} (y - H \mf(t)), \label{eq:EKIN_mean_evolve} \\
		\frac{\diff \Cf(t)}{\diff t} &= - C H^T R^{-1} H \Cf(t) - \Cf(t) H^T R^{-1} H C + 2 C H^T R^{-1} H C. \label{eq:EKIN_cov_evolve}
	\end{align}
	 In particular, if $H \in \R^{k \times d}$ has full column rank (i.e., $d\le k$, $\emph{rank}(H) = d$), then, as $t \rightarrow \infty$, 
	\begin{align}
		\mf(t) &\rightarrow (H^T R^{-1} H)^{-1} (H^T R^{-1} y), \label{eq:EKIN_mean_limit}\\
		\Cf(t) &\rightarrow C = (P^{-1} + H^T R^{-1} H)^{-1}. \label{eq:EKIN_cov_limit}
	\end{align}
	If $H \in \R^{k \times d}$ has full row rank (i.e., $d\ge k$, $\emph{rank}(H) = k$), then, as $t \rightarrow \infty,$
	\begin{align}
		H \mf(t) &\rightarrow y, \label{eq:EKIN_mean_limit2}\\
		H \Cf(t) H^T &\rightarrow H C H^T. \label{eq:EKIN_cov_limit2}
	\end{align}
\end{proposition}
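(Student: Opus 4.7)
The plan is to exploit the assumption $H(t)\equiv H$, which also forces $C(t)\equiv C=(P^{-1}+H^T R^{-1} H)^{-1}$. Under this assumption the SDE \eqref{eq:EKI_new_cont_linear} decouples across $n$ and becomes an Ornstein--Uhlenbeck equation with constant coefficients and a deterministic diffusion matrix. Consequently each $u^{(n)}(t)$ is Gaussian for $t>0$ (exactly when the initial data is Gaussian, and in the mean-field sense otherwise), and both $\mf(t)$ and $\Cf(t)$ satisfy closed-form linear matrix ODEs, in the spirit of the derivation used in Proposition \ref{prop:EKI} but now with a nontrivial forcing term arising from the modified diffusion.

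To derive \eqref{eq:EKIN_mean_evolve}, I would simply take expectations in \eqref{eq:EKI_new_cont_linear} and observe that the martingale term vanishes. For \eqref{eq:EKIN_cov_evolve}, set $e^{(n)}(t):=u^{(n)}(t)-\mf(t)$ and note that $\diff e^{(n)}=-CH^T R^{-1}H\,e^{(n)}\,\diff t+\sqrt{2}\,CH^T R^{-1/2}\,\diff W^{(n)}$. Applying It\^o's product rule to $e^{(n)}\otimes e^{(n)}$ and taking expectations, the two drift contributions yield $-CH^T R^{-1}H\,\Cf-\Cf H^T R^{-1}H\,C$, and the quadratic-variation correction from the constant diffusion gives the extra forcing $2\,CH^T R^{-1}H\,C$, recovering \eqref{eq:EKIN_cov_evolve}.

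For the long-time behaviour of the mean, the drift matrix $-CH^T R^{-1}H$ is similar to the symmetric matrix $-C^{1/2}H^T R^{-1}H\,C^{1/2}$. In the full column rank case $H^T R^{-1}H$ is positive definite, so the drift is Hurwitz and $\mf(t)$ converges to the unique equilibrium $(H^T R^{-1}H)^{-1}H^T R^{-1}y$, giving \eqref{eq:EKIN_mean_limit}. In the full row rank case I would change variables to $w(t):=H\mf(t)-y$, which satisfies $\dot w=-HCH^T R^{-1}\,w$; positive definiteness of $HCH^T$ (immediate from $C$ being positive definite and $H$ having full row rank) then forces $w(t)\to 0$, establishing \eqref{eq:EKIN_mean_limit2}. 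For the covariance, direct substitution shows that $\Cf^{*}=C$ solves the equilibrium Lyapunov-type relation $CH^T R^{-1}H\,\Cf^{*}+\Cf^{*} H^T R^{-1}H\,C=2\,CH^T R^{-1}H\,C$. In the full column rank case, the spectrum of $CH^T R^{-1}H$ lies in $(0,\infty)$, so diagonalizing this operator decouples \eqref{eq:EKIN_cov_evolve} into scalar ODEs whose unique attractor is $C$, yielding \eqref{eq:EKIN_cov_limit}. In the full row rank case I would sandwich \eqref{eq:EKIN_cov_evolve} between $H$ and $H^T$ to obtain a closed equation for $B(t):=H\Cf(t)H^T$ with equilibrium $B^{*}=HCH^T$, and apply the same spectral argument to the now invertible $HCH^T R^{-1}$.

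The main obstacle is establishing the global convergence of the covariance equation, since its drift is a matrix Sylvester-type operator rather than a scalar one. The cleanest route is to symmetrize via the change of variables $\widetilde\Cf:=C^{-1/2}\Cf C^{-1/2}$, noting that $C^{1/2}H^T R^{-1}H\,C^{1/2}$ is symmetric positive semidefinite, so that \eqref{eq:EKIN_cov_evolve} reduces to uncoupled one-dimensional dynamics on the eigenspaces. In the rank-deficient full row rank case, the analogous symmetrization must be performed in the data space, which is precisely why the projection $B(t)=H\Cf(t)H^T$ plays a central role. Once these reductions are in place the remaining computations are routine.
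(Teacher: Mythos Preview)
Your proposal is correct and follows essentially the same approach as the paper: derive the mean and covariance ODEs via It\^o's formula applied to $e^{(n)}\otimes e^{(n)}$, identify the equilibrium, and handle the full row rank case by projecting the covariance equation into the data space. Your treatment is in fact more careful than the paper's, which simply asserts convergence by ``setting the right-hand side to zero and using uniqueness'' without an explicit Hurwitz/spectral argument, and uses the normalization $R^{-1/2}H\Cf H^T R^{-1/2}$ rather than your $H\Cf H^T$, but these are cosmetic differences.
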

\begin{proof} Note that here, in contrast to the setting of Proposition \ref{prop:EKI}, the distribution of $u^{(n)}(t)$ does not depend on the ensemble size $N,$ and we have simply $\mf(t) = \Expect \bigl[u^{(n)}(t) \bigr]$ and $\Cf(t) = \Expect \big[ e^{(n)}(t) \otimes e^{(n)}(t) \big], $ with $e^{(n)}(t) := u^{(n)}(t) - \mf(t).$ The evolution of $\mf(t)$ follows directly from \eqref{eq:EKI_new_cont_linear}. To obtain the evolution of $\Cf(t)$, we use a similar technique as in the proof of Proposition \ref{prop:EKI}. Applying It\^{o}'s formula, 
	\begin{equation*}
		\begin{split}
			\frac{\diff \Cf(t)}{\diff t} &= \Expect \Big[ -C H^T R^{-1} H (e^{(n)} \otimes e^{(n)}) - (e^{(n)} \otimes e^{(n)}) H^T R^{-1} H C + 2C H^T R^{-1} H C \Big]\\
			&= -C H^T R^{-1} H \Cf(t) - \Cf(t) H^T R^{-1} H C + 2 C H^T R^{-1} H C,
		\end{split}
	\end{equation*}
	which recovers \eqref{eq:EKIN_cov_evolve}. 
	
	 If $H$ has full column rank, then $H^T R^{-1} H$ is invertible, and \eqref{eq:EKIN_mean_limit} follows immediately from \eqref{eq:EKIN_mean_evolve}. By setting the right-hand side of \eqref{eq:EKIN_cov_evolve} to 0, and using the fact that it has a unique solution, we derive \eqref{eq:EKIN_cov_limit}. 
	 
 If $H$ has full row rank, then \eqref{eq:EKIN_mean_limit2} follows immediately from \eqref{eq:EKIN_mean_evolve}. Next, the substitutions
	\begin{equation*}
		\widetilde{\Cf}(t) = R^{-1/2} H \Cf(t) H^T R^{-1/2}, \quad\quad\quad \widetilde{C} = R^{-1/2} H C H^T R^{-1/2}
	\end{equation*}
	allow to transform \eqref{eq:EKIN_cov_evolve}  into
	\begin{equation}
		\frac{\diff \widetilde{\Cf}(t)}{\diff t} = - \widetilde{C}\widetilde{\Cf}(t) - \widetilde{\Cf}(t)\widetilde{C} + 2\widetilde{C}^2. \label{eq:EKIN_tr_cov_evolve}
	\end{equation}
	Since $\widetilde{C}$ is invertible, by setting the right-hand side of \eqref{eq:EKIN_tr_cov_evolve} to 0 we deduce that $\widetilde{\Cf}(t) \rightarrow \widetilde{C}$ as $t \rightarrow \infty$, which recovers \eqref{eq:EKIN_cov_limit2}.
\end{proof}

\subsection{Gradient Structure and Discussion}\label{ssec:gradientstructure}
\paragraph{LM Algorithms in the Continuum Limit}
Levenberg-Marquardt algorithms have a natural gradient structure in the continuum limit. This was shown in Equation \eqref{eq:lm_cont_limit}, where the preconditioner $P$ corresponds to the regularizer $|\cdot|_P$ that is used in the Levenberg-Marquardt algorithm \eqref{eq:LMobjectivewithlengthstep}. Ensemble-based Levenberg-Marquardt algorithms also possess a gradient structure. To see this,  we consider an update $u_{i+1}^{(n)} = \uin + \vin$, where $\vin$ is the unconstrained minimizer of the same objective as \eqref{eq:EKI_newer_C}
\begin{equation*}
	\Jucdmn \hspace{-0.2cm}(v) = \frac12 \bigl| y_i^{(n)} - h(u_i^{(n)} )  - H_i v \bigr|^2_R  + \frac{1}{2\alpha} \bigl|v\bigr|^2_{S_i} \quad \quad y_i^{(n)} \sim \mathcal{N}(y, 2 \alpha^{-1} R),
\end{equation*}
except that we allow any positive (semi)definite matrix $S_i$ to act as a `regularizer'. The resulting continuum limit is given by the SDE system
\begin{equation}\label{eq:EKIN_cont_general}
	\diff u^{(n)} = S(t) H(t)^T R^{-1} \big( y - h(u^{(n)}) \big) \diff t + \sqrt{2} S(t) H(t)^T R^{-1/2} \diff W^{(n)} ,
\end{equation}
where $u^{(n)}(t), S(t), H(t)$ are continuous time analogs of $u_i^{(n)}, S_i, H_i$. Notice that $H(t)^T R^{-1} (y - h(u^{(n)}))$ is exactly $-\Jdm'(u^{(n)})$, so that we may rewrite \eqref{eq:EKIN_cont_general} as
\begin{equation}\label{eq:LM_cont}
	\dot{u}^{(n)} = - S(t) \Jdm'(u^{(n)}) + \sqrt{2} S(t) H(t)^T R^{-1/2} \dot{W}^{(n)} ,
\end{equation}
which is a perturbed gradient descent with preconditioner $S(t)$. We recall that $S(t) = P^{uu}(t)$ in EKI and $S(t) = C(t)= \big( P^{-1} + H(t)^T R^{-1} H(t) \big)^{-1}$ in EKI-SL. Other choices of $S(t)$ are possible and will be studied in future work.

\paragraph{Gauss-Newton Algorithms in the Continuum Limit}
Gauss-Newton algorithms also have a natural gradient structure in the continuum limit. As we shown in Equation \eqref{gn_cont_limit}, the Gauss-Newton method applied to a Tikhonov-Phillips objective can be regarded as a gradient flow with a preconditioner that is the inverse Hessian matrix of the objective. Ensemble-based Gauss-Newton methods also possess a similar gradient structure. Recall that in Equation \eqref{eq:IEKF_new_cont} we formulate the continuum limit of IEKF-SL:
\begin{equation}\label{eq:IEKF_new_cont2}
	\diff u^{(n)} = C(t) \Big( H(t)^T R^{-1} \big( y - h(u^{(n)}) \big) + P^{-1} \big( m - u^{(n)} \big) \Big) \diff t + \sqrt{2 C(t)} \diff W^{(n)}.
\end{equation}
Notice that the drift term is exactly $-C(t)\Jtp'(u^{(n)})$, so we may rewrite \eqref{eq:IEKF_new_cont2} as
\begin{equation}\label{eq:GN_cont}
	\dot{u}^{(n)} = - C(t)\Jtp'(u^{(n)}) + \sqrt{2 C(t)} \dot{W}^{(n)},
\end{equation}
which is a perturbed gradient descent with preconditioner $C(t)$, the inverse Hessian of $\Jtp$.

\paragraph{A Unified View of Levenberg-Marquardt and Gauss-Newton Algorithms in the Continuum Limit}
As a conclusion of above discussion, in the continuum limit Levenberg-Marquardt algorithms (e.g., EKI, EKI-SL) are (perturbed) gradient descent methods, with a preconditioner determined by the regularizer used in the Levenberg-Marquardt update step. Gauss-Newton algorithms (e.g., IEKF, IEKF-SL) are also (perturbed) gradient descent, with a preconditioner determined by the inverse Hessian of the objective. 

Interestingly, there are cases when the two types of algorithms coincide, even with the same amount of perturbation in the gradient descent step. A way to see this is to set the Levenberg-Marquardt regularizer to be the inverse Hessian of the objective. In equation \eqref{eq:LM_cont}, if we consider the Tikhonov-Phillips objective (i.e., replace $\Jdm$ by $\Jtp$, $H$ by $G$, $Q$ by $R$),  and set $S(t) = C(t)$, then \eqref{eq:LM_cont} and \eqref{eq:GN_cont} will coincide, leading to the same SDE system. This is the reason why we do not introduce a `TEKI-SL' algorithm, as it is identical to IEKF-SL.

\paragraph{Relationship to Ensemble Kalman Sampler (EKS)}
Another algorithm of interest is the Ensemble Kalman Sampler (EKS) \cite{GHLS19, DL19}. Although not discussed in this work, the EKS update is similar to \eqref{eq:IEKF_new_cont2}. In fact, if we replace $C(t)$ by the ensemble covariance $P^{uu}(t)$, and use the fact that $P^{uu}(t)H(t)^T=P^{uy}(t)$ by definition, we immediately recover the evolution equation of EKS:
\begin{equation*}
\diff u^{(n)} = \Big( P^{uy}(t)^T R^{-1} \big( y - h(u^{(n)}) \big)  + P^{-1} \big( m - u^{(n)} \big) \Big) \diff t + \sqrt{2 P^{uu}(t)} \diff W^{(n)}.
\end{equation*}
Then an Euler-Maruyama discretization is performed to compute the update formula in discrete form. However, as we find out in several numerical experiments, the length-step $\alpha$ needs to be chosen carefully. If the noise $R$ has a small scale, EKS often blows up in the first few iterations, while other algorithms with the same length-step do not. Also, if we consider a linear forward model $h(u) = Hu$, EKS still requires a mean field assumption $N \rightarrow \infty$ in order for it to converge to the posterior distribution, while IEKF-SL approximately needs $N\approx d$ particles where $d$ is the input dimension.

\section{Numerical Examples}
\label{sec:num}
In this section we provide a numerical comparison of the iterative ensemble Kalman methods introduced in Sections \ref{sec:EnsembleKalmanLearning} and \ref{sec:EnsembleKalmanLearningNew}. Our experiments highlight the variety of applications that have motivated the development of these methods, and illustrate their use in a wide range of settings. 

\subsection{Elliptic Boundary Value Problem}\label{ex:elliptic2d}
In this subsection we consider a simple nonlinear Bayesian inverse problem originally presented in \cite{ESS15}, where the unknown parameter is two dimensional and the forward map admits a closed formula. These features facilitate both the visualization and the interpretation of the solution.  
\subsubsection{Problem Setup}
Consider the elliptic boundary value problem
\begin{align}\label{eq:elliptic2d_forward}
\begin{split}
\frac{\diff}{\diff x} \bigg(\exp (u_1) \frac{\diff}{\diff x} p(x) \bigg) &= 1, \quad x \in (0,1), \\
p(0) &= 0,  \quad \quad p(1) = u_2.
\end{split}
\end{align}
  It can be checked that \eqref{eq:elliptic2d_forward} has an explicit solution
\begin{equation}\label{eq:elliptic2d_exact}
	p_u(x) = u_2 x -\frac{1}{2} \exp(-u_1)(x^2 - x).
\end{equation}

\begin{figure}[htbp]
	\centering
	\begin{subfigure}{.32\textwidth}
		\centering
		\includegraphics[width=5cm]{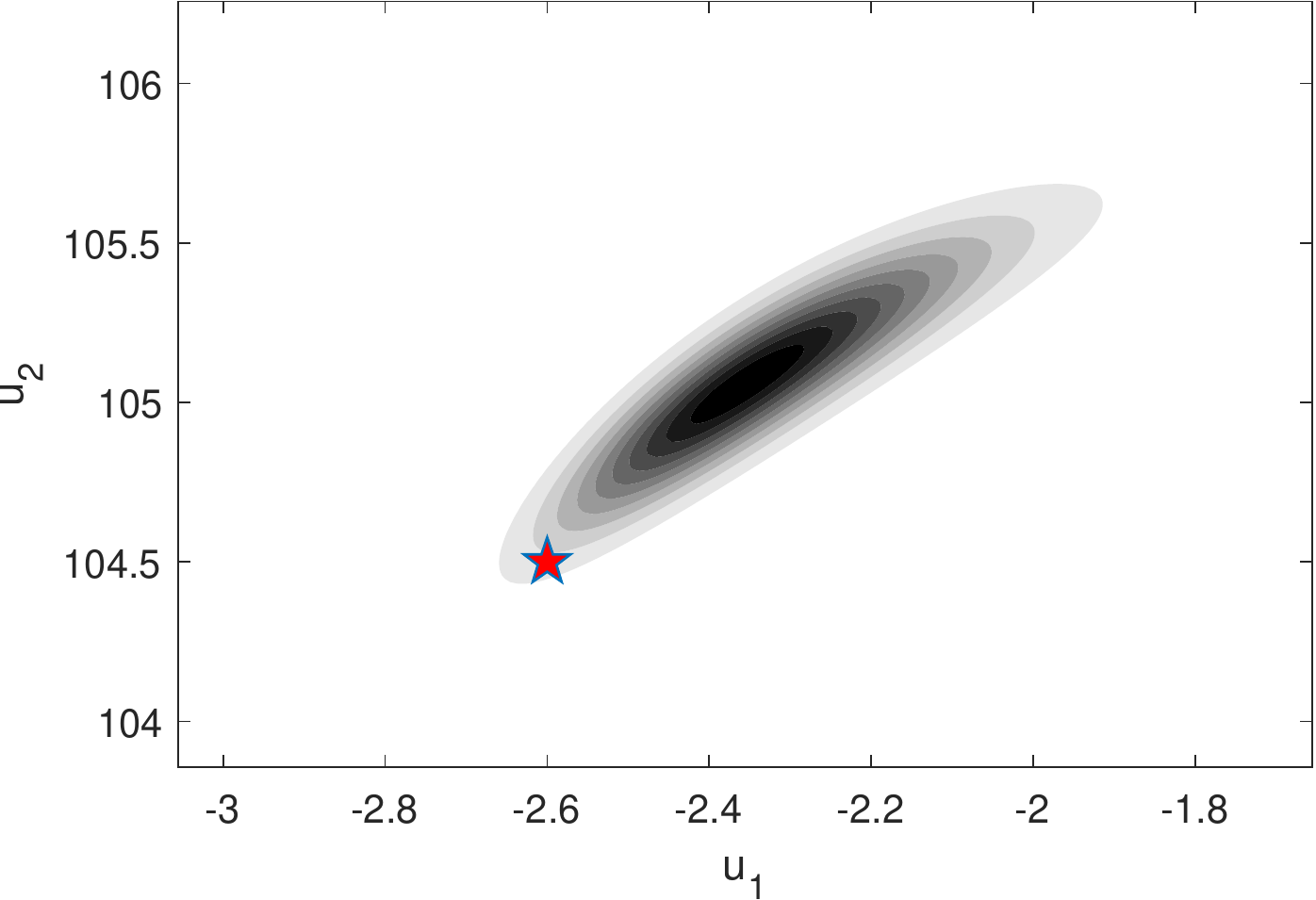}
		\caption{Truth $u^\dagger$.}
	\end{subfigure}%
	\begin{subfigure}{.32\textwidth}
		\centering
		\includegraphics[width=5cm]{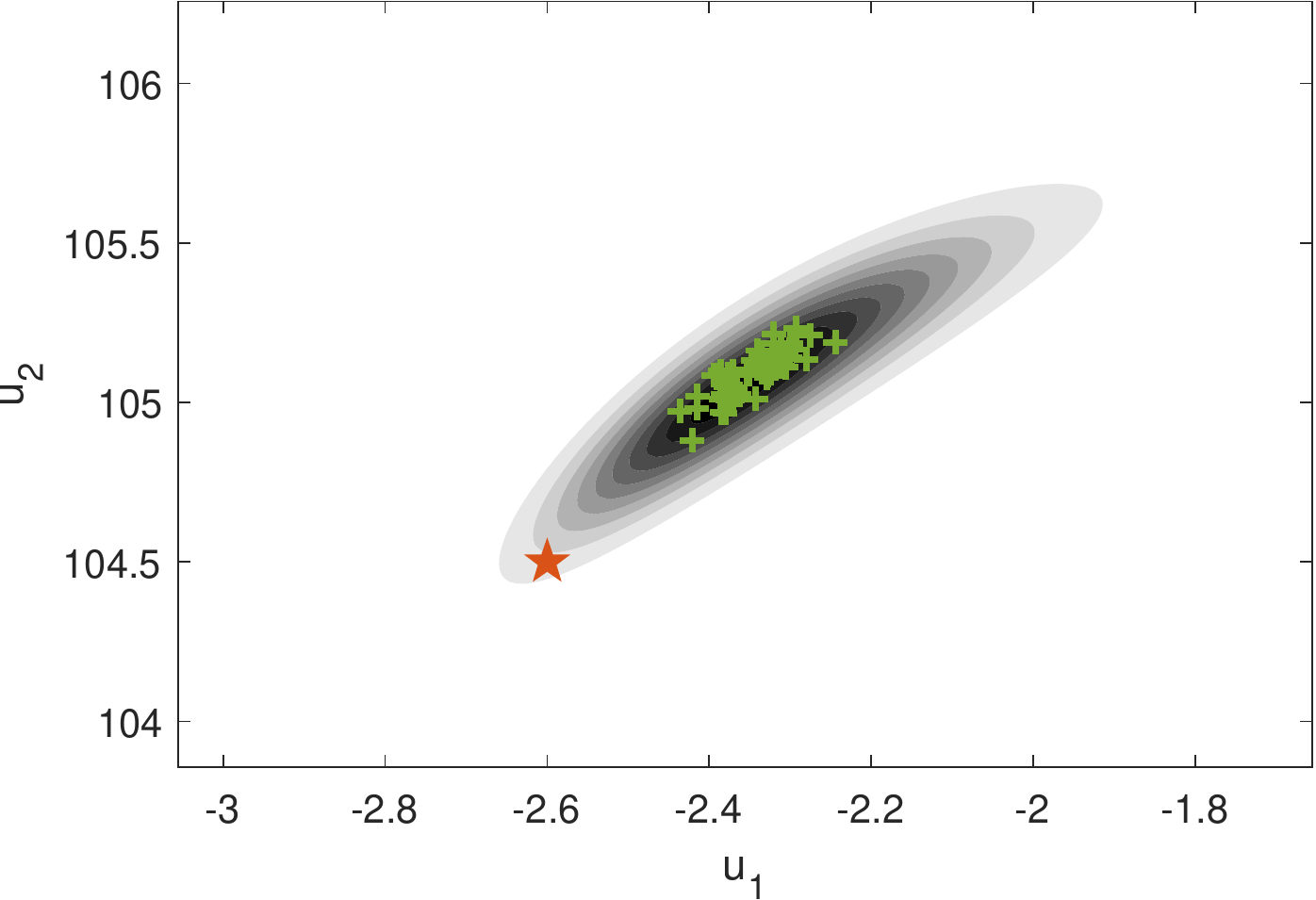}
		\caption{EKI.}
	\end{subfigure}%
	\begin{subfigure}{.32\textwidth}
		\centering
		\includegraphics[width=5cm]{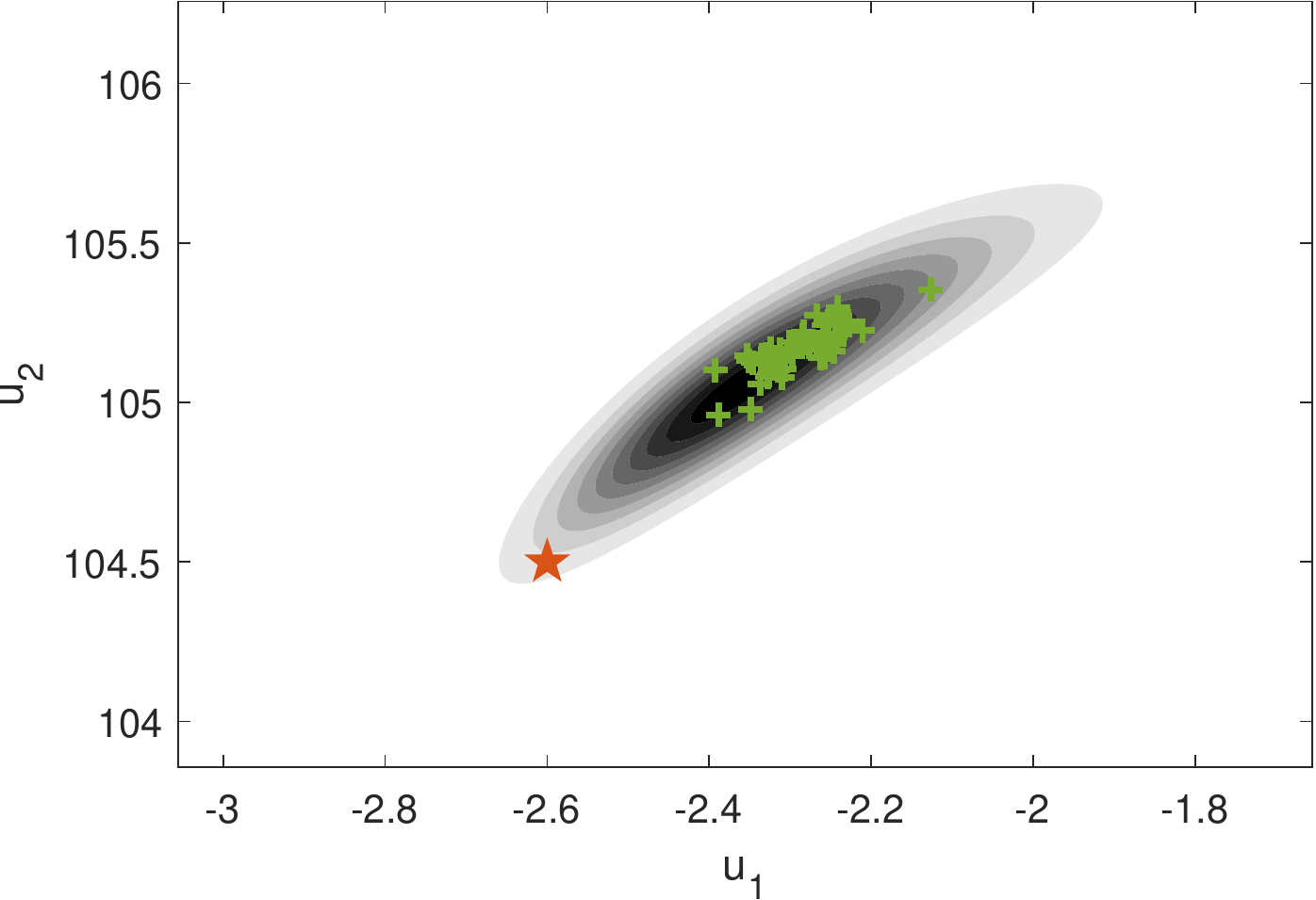}
		\caption{TEKI.}
	\end{subfigure}%
	\vskip\baselineskip
	\begin{subfigure}{.32\textwidth}
		\centering
		\includegraphics[width=5cm]{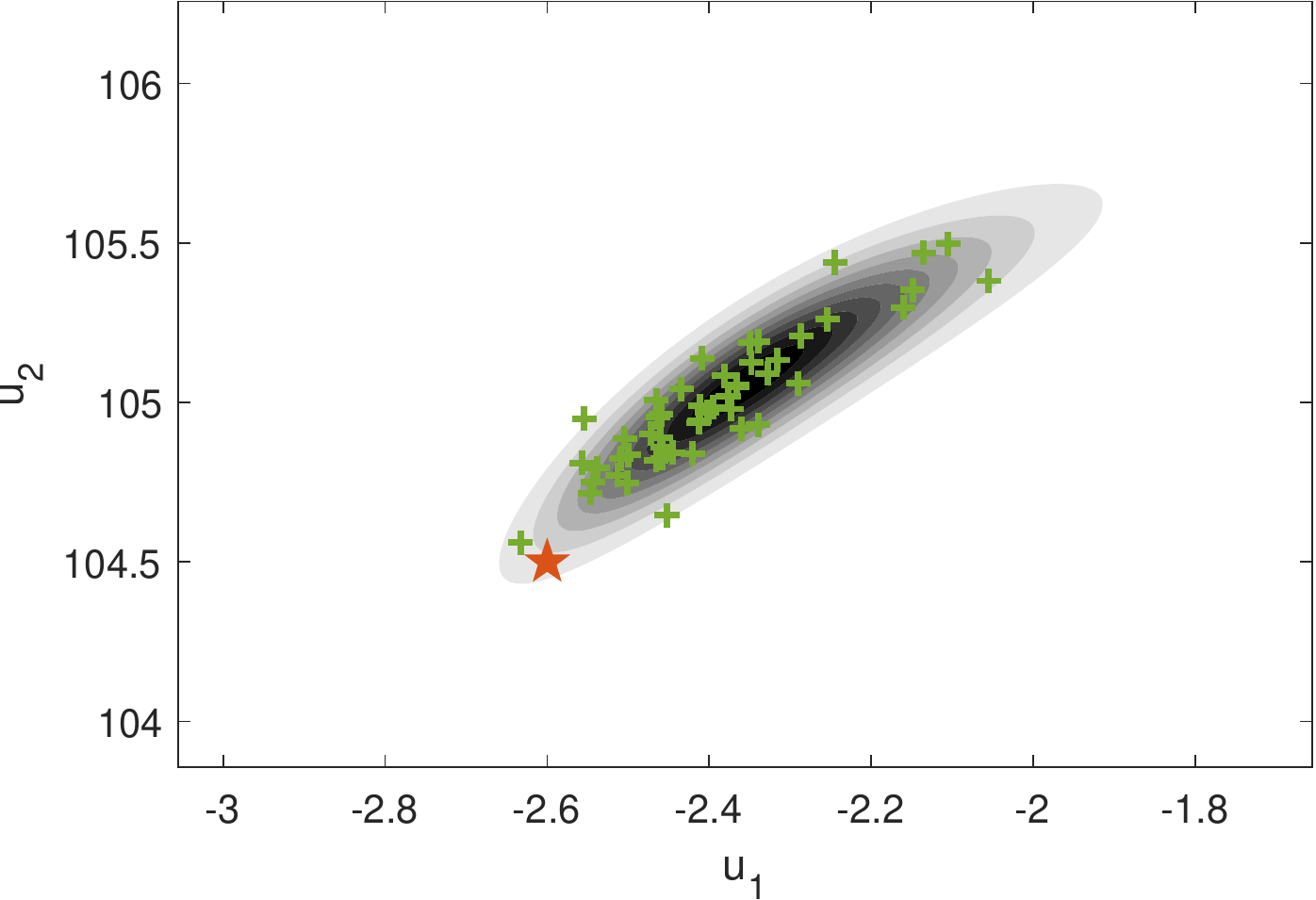}
		\caption{IEKF.}
	\end{subfigure}%
	\begin{subfigure}{.32\textwidth}
		\centering
		\includegraphics[width=5cm]{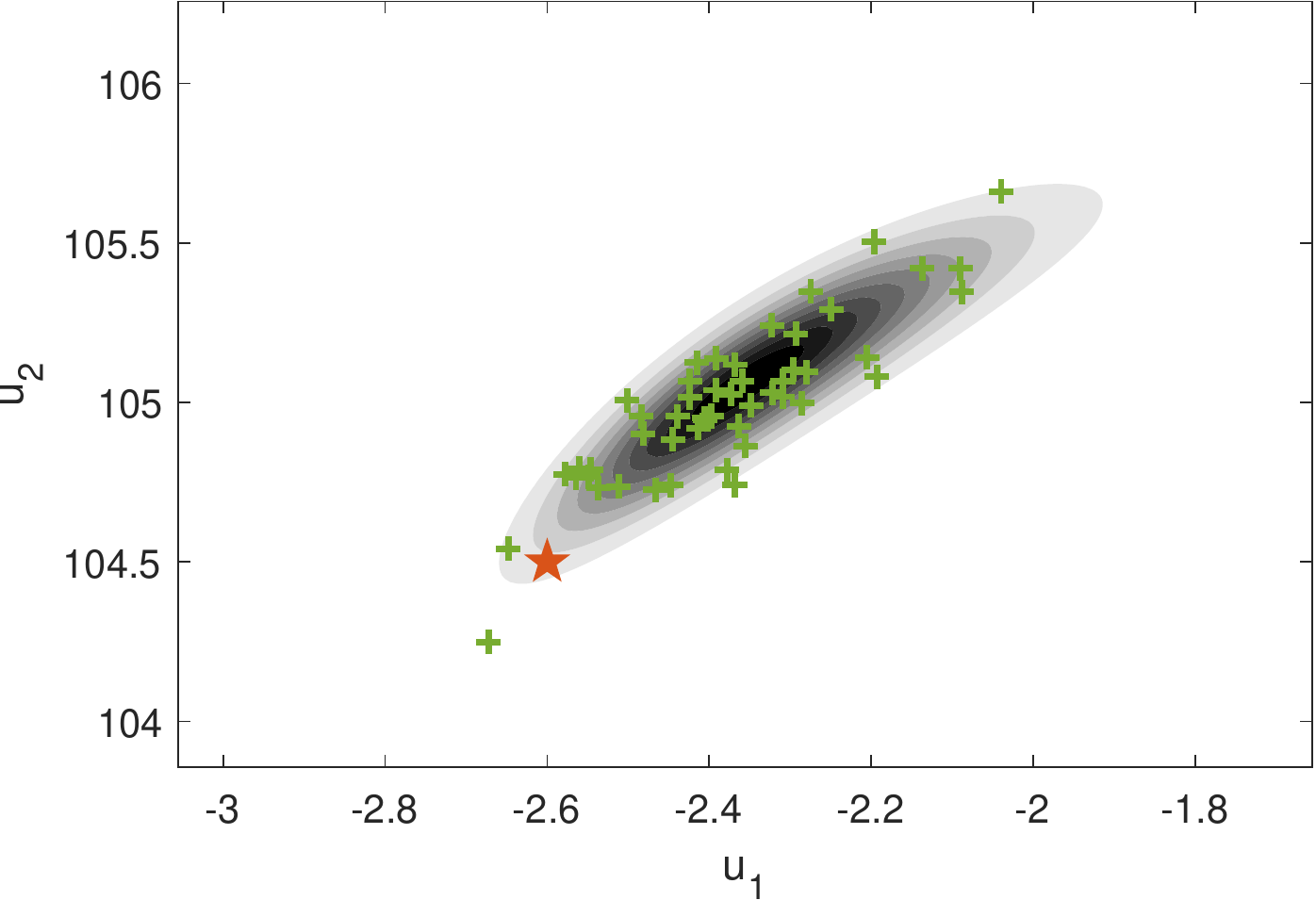}
		\caption{EKI-SL.}
	\end{subfigure}%
	\begin{subfigure}{.32\textwidth}
		\centering
		\includegraphics[width=5cm]{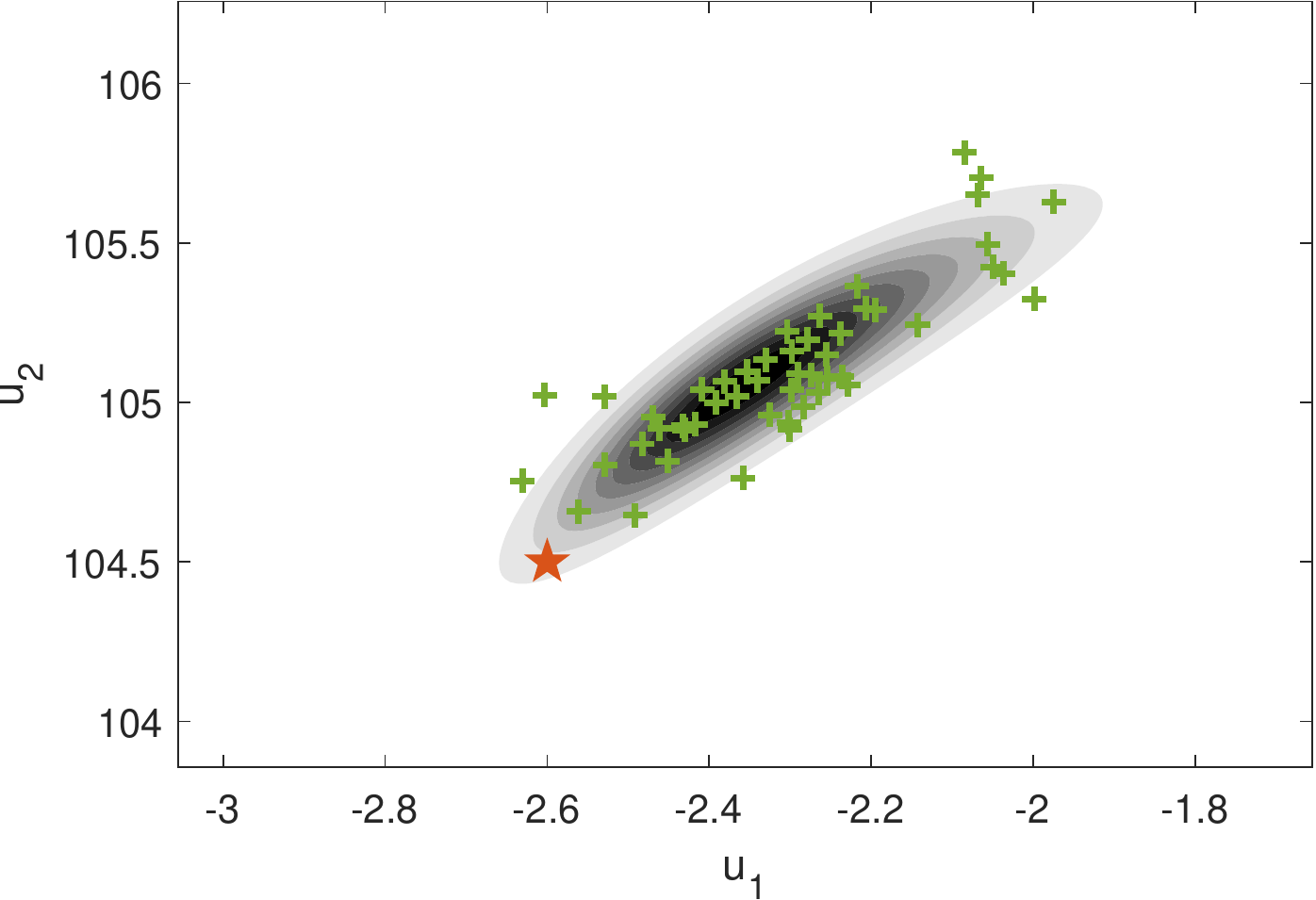}
		\caption{IEKF-SL.}
	\end{subfigure}%
	\caption{Ensemble members (green) after $100$ iterations, with truth $u^\dagger$ (red star) and contour plot of (unnormalized) posterior density.}
	 		\label{fig:e2d_ensem}
\end{figure}

We seek to recover $u = (u_1, u_2)^T$ from noisy observation of $p_u$ at two points $x_1 = 0.25$ and $x_2 = 0.75$. Precisely, we define $h(u) := \bigl( p_u(x_1), p_u(x_2) \bigr)^T$ and consider the inverse problem of recovering $u$ from data $y$ of the form
\begin{equation}\label{eq:elliptic2d_obs}
y = h( u) + \eta,
\end{equation} 
where $\eta\sim \Nc(0,  \gamma^2 I_2)$. We set a Gaussian prior on the unknown parameter  $u \sim \Nc(0, 1) \times \Nc(100, 16)$. In our numerical experiments we let the true parameter be $u^\dagger = (-2.6, 104.5)^T$ and use it to generate synthetic data $ y = h( u^\dagger) + \eta$ with noise level $\gamma = 0.1.$

\subsubsection{Implementation Details and Numerical Results}
We set the ensemble size to be $N =50$. The initial ensemble $\{ u_0^{(n)}\}_{n=1}^N$ is drawn independently from the prior. The length-step $\alpha$ is fixed to be 0.1 for all methods. We run each algorithm up to time $T = 10$, which corresponds to 100 iterations.

We plot the level curves of the posterior density of $u = (u_1, u_2)^T$ in Figure \ref{fig:e2d_ensem}. The forward model is approximately linear, as can be seen from the contour plots in Figure \ref{fig:e2d_ensem} or directly from Equation \eqref{eq:elliptic2d_exact}. Hence it can be used to validate the claims we made in previous sections.

\begin{figure}[htbp]
	\centering
	\includegraphics[width=7cm]{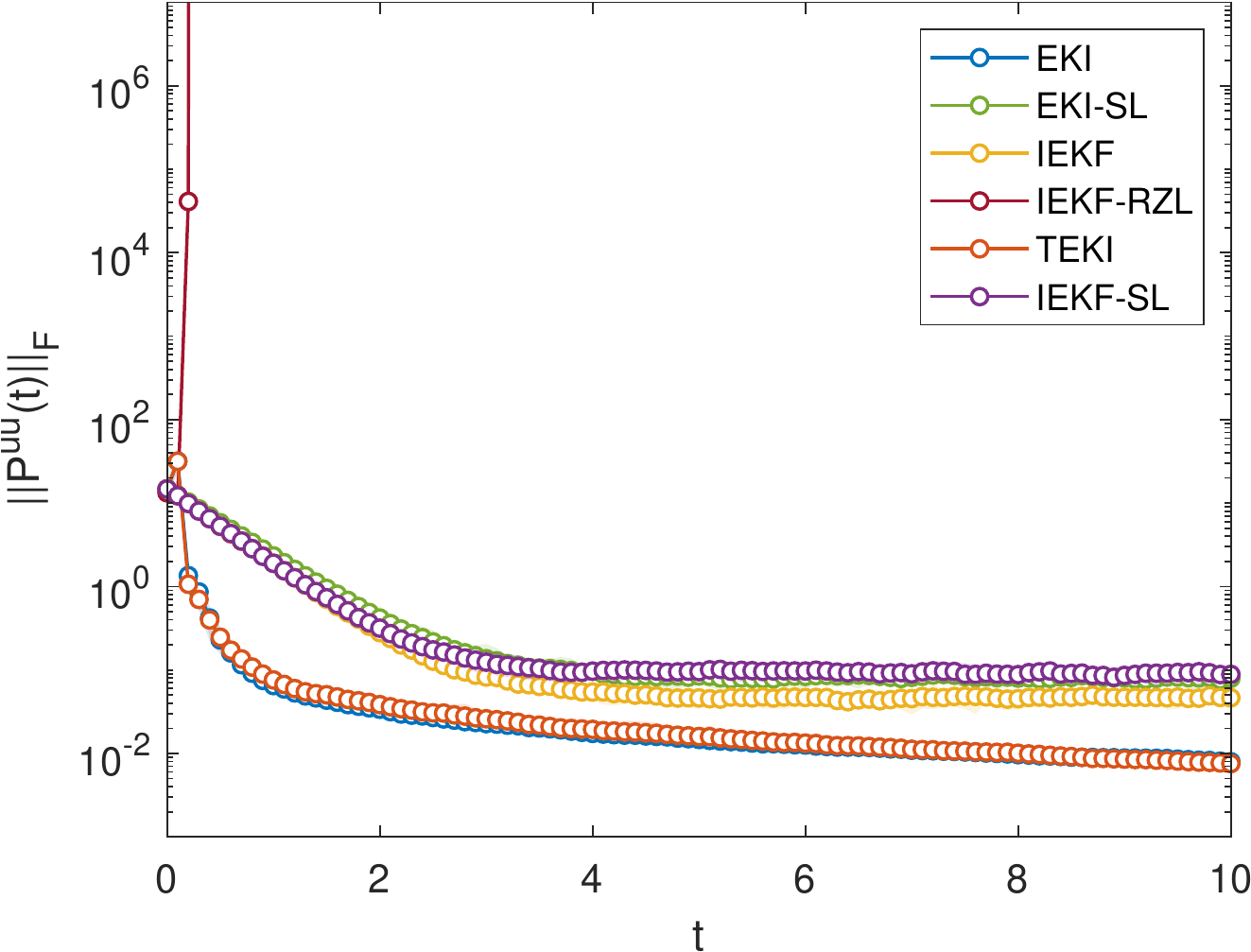}
	\caption{Evolution of the Frobenius norm of the ensemble covariance $P^{uu}(t)$. The norm of IEKF-RZL blows up after a few iterations. The norms of the EKI and TEKI  are almost identical and monotonically decreasing. The norms of the new variants EKI-SL and IEKF-SL are similar and stabilize after around 40 iterations. The norm of IEKF lies between those of the old and new variants.}
	\label{fig:e2d_cov}
\end{figure}

\begin{figure}[ht]
	\centering
	\begin{subfigure}{.48\textwidth}
		\centering
		\includegraphics[width=7cm,height = 4.75cm]{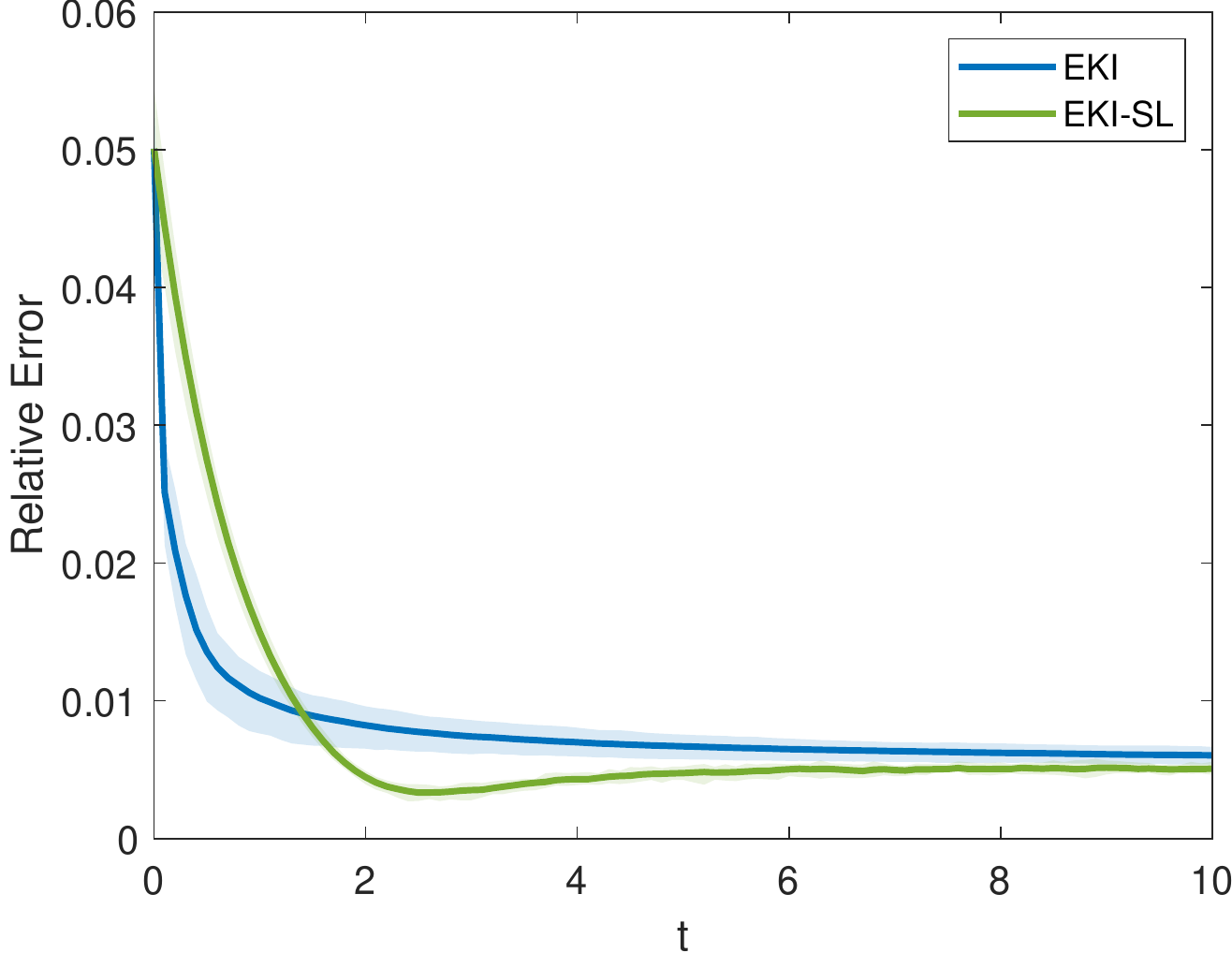}
	\end{subfigure}%
	\begin{subfigure}{.48\textwidth}
		\centering
		\includegraphics[width=7cm,height = 4.75cm]{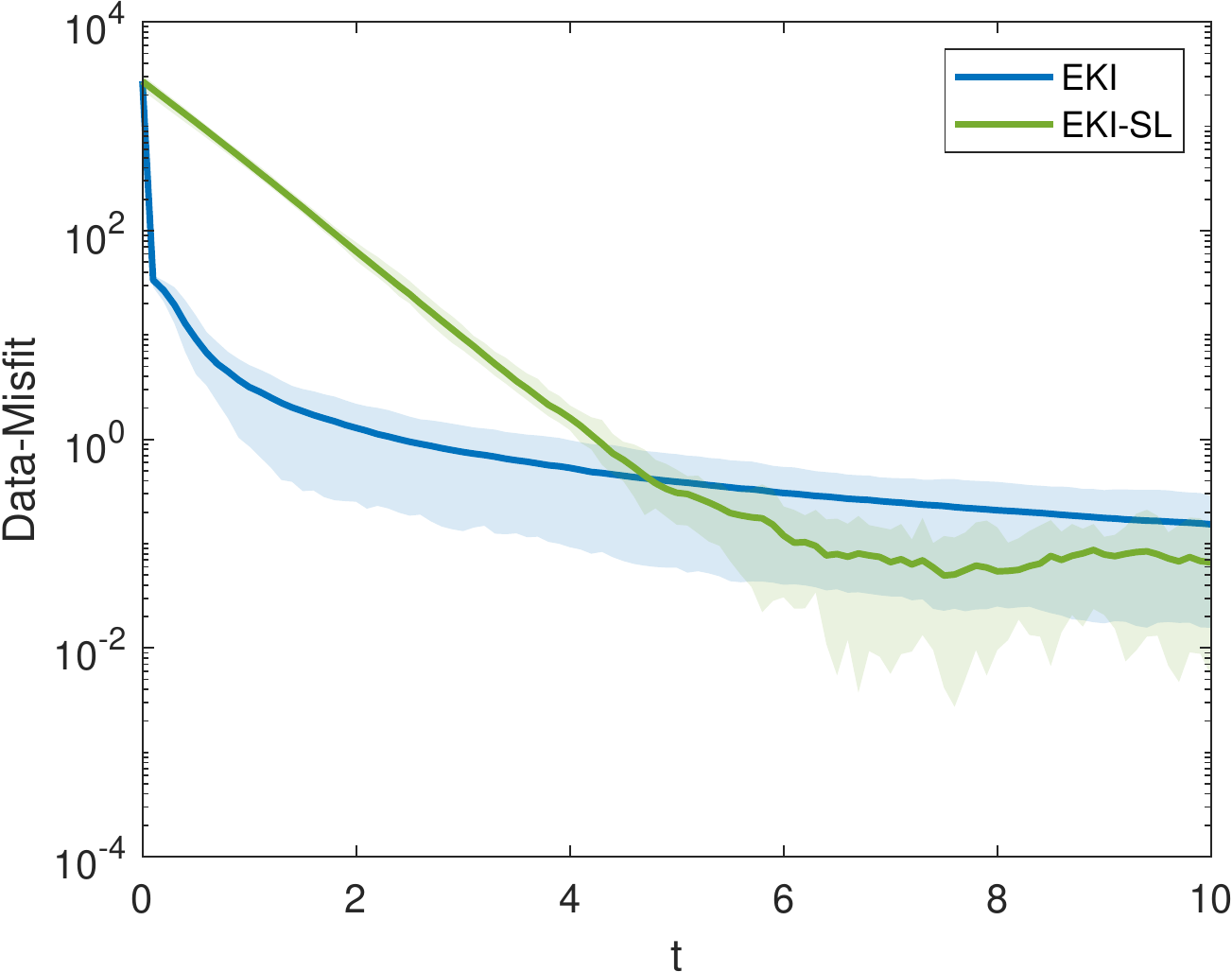}
	\end{subfigure}%
	\caption{EKI \& EKI-SL: Relative errors and data misfit  w.r.t time $t$.}
	 		\label{fig:e2d_dm}
\end{figure}

\begin{figure}[ht]
	\centering
	\begin{subfigure}{.48\textwidth}
		\centering
		\includegraphics[width=7cm,height = 4.75cm]{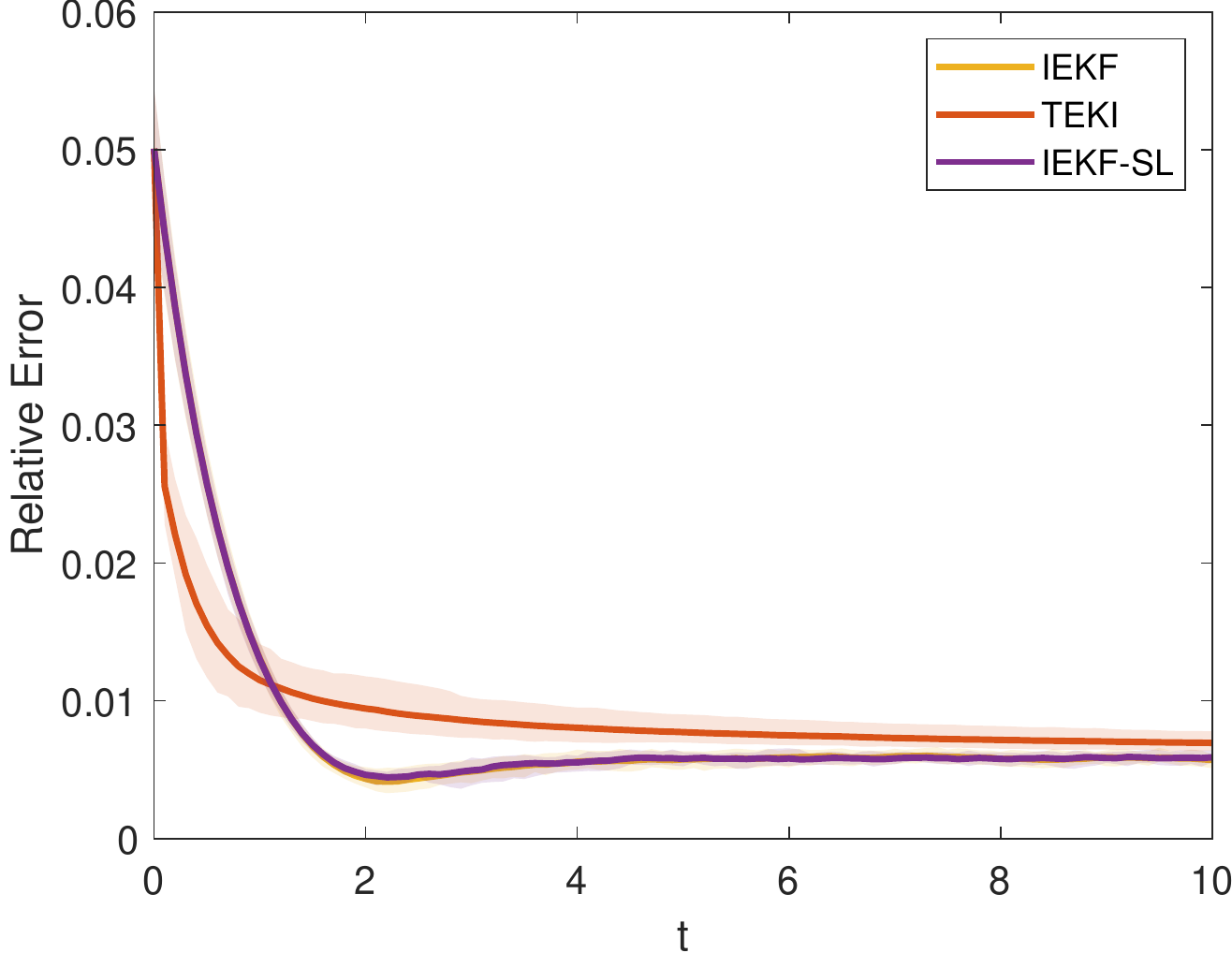}
	\end{subfigure}%
	\begin{subfigure}{.48\textwidth}
		\centering
		\includegraphics[width=7cm,height = 4.75cm]{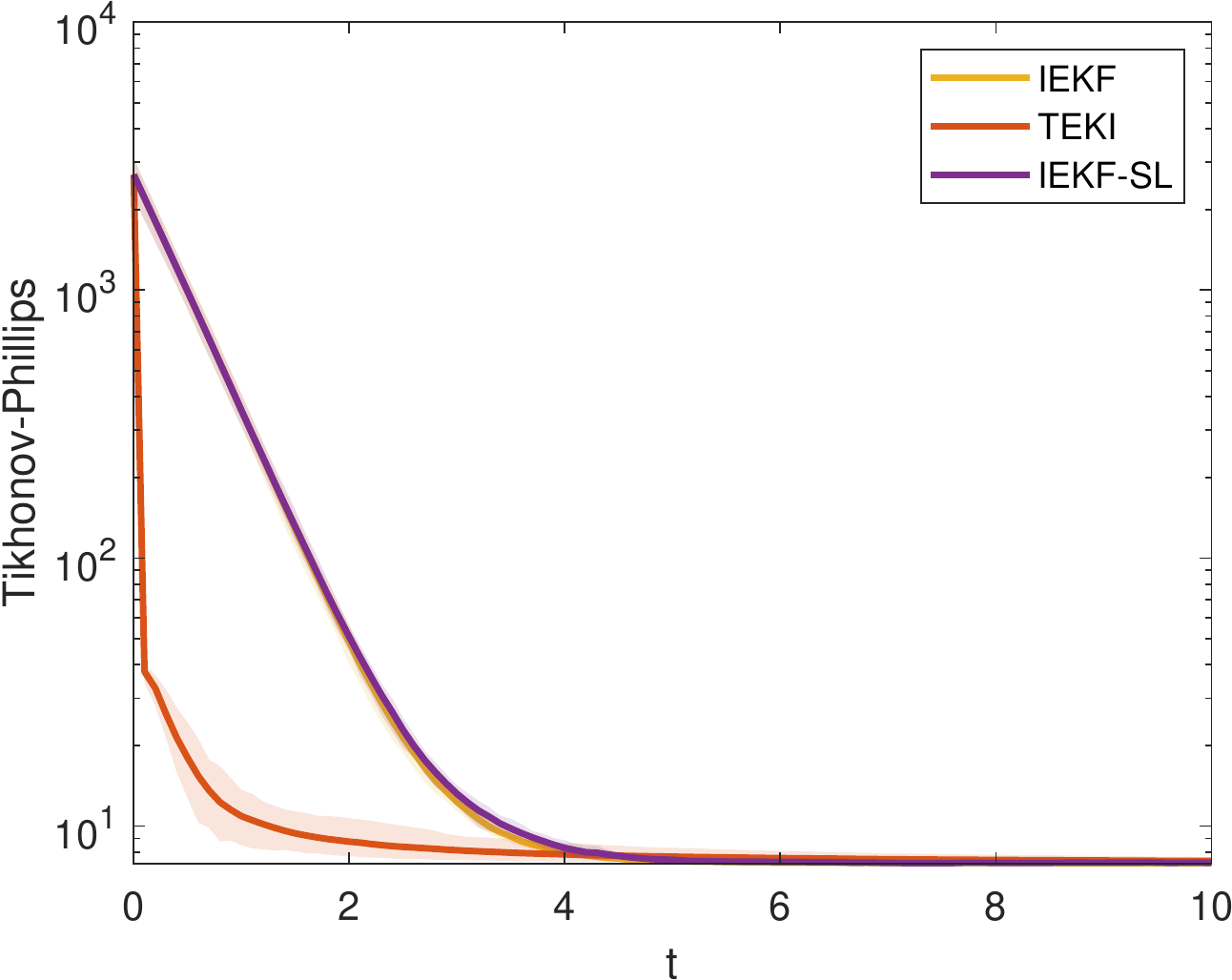}
	\end{subfigure}%
	\caption{TEKI, IEKF \& IEKF-SL: Relative errors and Tikhonov-Phillips objective w.r.t time $t$.}
	 		\label{fig:e2d_tp}
\end{figure}

Figure \ref{fig:e2d_cov} compares the time evolution of the empirical covariance of the different methods. The IEKF-RZL algorithm consistently blows up in the first few iterations due to the small noise which, as discussed in Section \ref{ssec:IenKFSL}, is an important drawback. Due to the poor performance of IEKF-RZL in small noise regimes, we will not include this method in subsequent comparisons. The EKI and TEKI plots show the `ensemble collapse' phenomenon discussed in Sections \ref{ssec:EKI} and \ref{ssec:TEKI}. Note that the size of the empirical covariances of EKI and TEKI decreases monotonically, and by the 100-th iteration their size is one order of magnitude smaller than that of the new variants IEKF-SL and EKI-SL. The ensemble collapse of EKI and TEKI can also be seen in Figure \ref{fig:e2d_ensem}, where we plot all the ensemble members after 100 iterations. 
 In contrast, Figure \ref{fig:e2d_cov}  shows that the size of the empirical  covariances of the new variants IEKF-SL and EKI-SL stabilizes after around 40 iterations, and Figure \ref{fig:e2d_ensem} suggests that the spread of the ensemble matches that of the posterior. These results are in agreement with the theoretical results derived in Sections \ref{sec:EKIN} and \ref{sec:IEKS} in a linear setting. Although we have not discussed the IEKF method when $\alpha$ is small, its ensemble covariance does not collapse in the linear setting, as can be seen in Figure \ref{fig:e2d_cov}. 

In Figures \ref{fig:e2d_dm} and \ref{fig:e2d_tp} we show the performance of the different methods along the full iteration sequence. Here and in subsequent numerical examples we use two performance assessments. First, the relative error, defined as $| m(t) - u^\dagger | / |u^\dagger |$ where $m(t)$ is the ensemble's empirical mean, which evaluates how well the ensemble mean approximates the truth. Second, we assess how each ensemble method performs in terms of its own optimization objective. Precisely, we report  the data-misfit objective $\Jdm\big(m(t)\big)$ for EKI and EKI-SL, and the Tikhonov-Phillips objective $\Jtp\big(m(t)\big)$ for IEKF, TEKI and IEKF-SL. We run 10 trials for each algorithm, using different ensemble initializations (drawn from the same prior), and generate the error bars accordingly. Since this is a simple toy problem, all methods perform well. However, we note that the first iterations of EKI and TEKI reduce the objective faster than other algorithms.

\subsection{High-Dimensional Linear Inverse Problem}\label{ssec:highd}
In this subsection we consider a linear Bayesian inverse problem from  \cite{ILS13}. This example illustrates the use of iterative ensemble Kalman methods in settings where both the size of the ensemble and the dimension of the data are significantly smaller than the dimension of the unknown parameter. 
\subsubsection{Problem setup}
 Consider the one dimensional elliptic equation
\begin{align}\label{eq:elliptic_forward}
\begin{split}
- \frac{\diff^2 p}{\diff x^2} + p &=u ,\quad    x  \in   (0,\pi), \\
						p(0 ) &= p( \pi) =0. 
\end{split}					
\end{align}
We seek to recover $u$ from noisy observation of $p$ at $k = 2^4 - 1$ equispaced points $x_j = \frac{j}{2^4} \pi.$ We assume that the data is generated from the model
\begin{equation}\label{eq:elliptic_obs}
y_j = p(x_j) + \eta_j, \quad j = 1,\dots,k,
\end{equation}
where $\eta_j \sim \Nc(0, \gamma^2)$ are independent. By defining $A = -\frac{\diff^2}{\diff x^2} + id$ and letting $\OO$ be the observation operator defined by $\big( \OO(p) \big)_j = p(x_j)$, we can rewrite \eqref{eq:elliptic_obs} as 
\begin{equation*}
y = h(u) + \eta, \quad \eta \sim \Nc(0, \gamma^2  I_k ),
\end{equation*}
where $h = \OO \circ A^{-1}$. The forward problem \eqref{eq:elliptic_forward} is solved on a uniform mesh with meshwidth $w=2^{-8}$ by a finite element method with continuous, piecewise linear basis functions. We assume that the unknown parameter $u$ has a Gaussian prior distribution, $u \sim \Nc(0, C_0)$ with covariance operator $C_0 = 10 (A - id)^{-1}$  with homogeneous Dirichlet boundary conditions.  This prior can be interpreted as the law of a Brownian bridge between $0$ and $\pi$. For computational purposes we view $u$ as a random vector in $\R^{2^8}$ and the linear map $h(u)$ is represented by a matrix $H \in \R^{(2^4 -1) \times 2^8}.$  The true parameter $u^\dagger$ is sampled from this prior (cf. Figure \ref{fig:e_ensem}), and is used to generate synthetic observation data $y = H u^\dagger + \eta$ with noise level $\gamma = 0.01$.

\subsubsection{Implementation Details and Numerical Results}
We set the ensemble size to be $N = 50$. The initial ensemble $\{ u_0^{(n)}\}_{n=1}^N$ is drawn independently from the prior. The length-step $\alpha$ is fixed to be 0.05 for all methods.  We run each algorithm up to time $T = 30$, which corresponds to 600 iterations.

We note that the linear inverse problem considered here has input dimension $2^8=256$, which is much larger than the ensemble size $N$. Combining the results from Figure \ref{fig:e_ensem}, \ref{fig:e_dm} and \ref{fig:e_tp}, EKI and EKI-SL clearly overfit the data. In contrast, TEKI over-regularizes the data, and we easily notice the ensemble collapse. The IEKF and IEKF-SL lie in between, and approximate the truth slightly better.
\begin{figure}[htbp]
	\centering
	\begin{subfigure}{.32\textwidth}
		\centering
		\includegraphics[width=5cm]{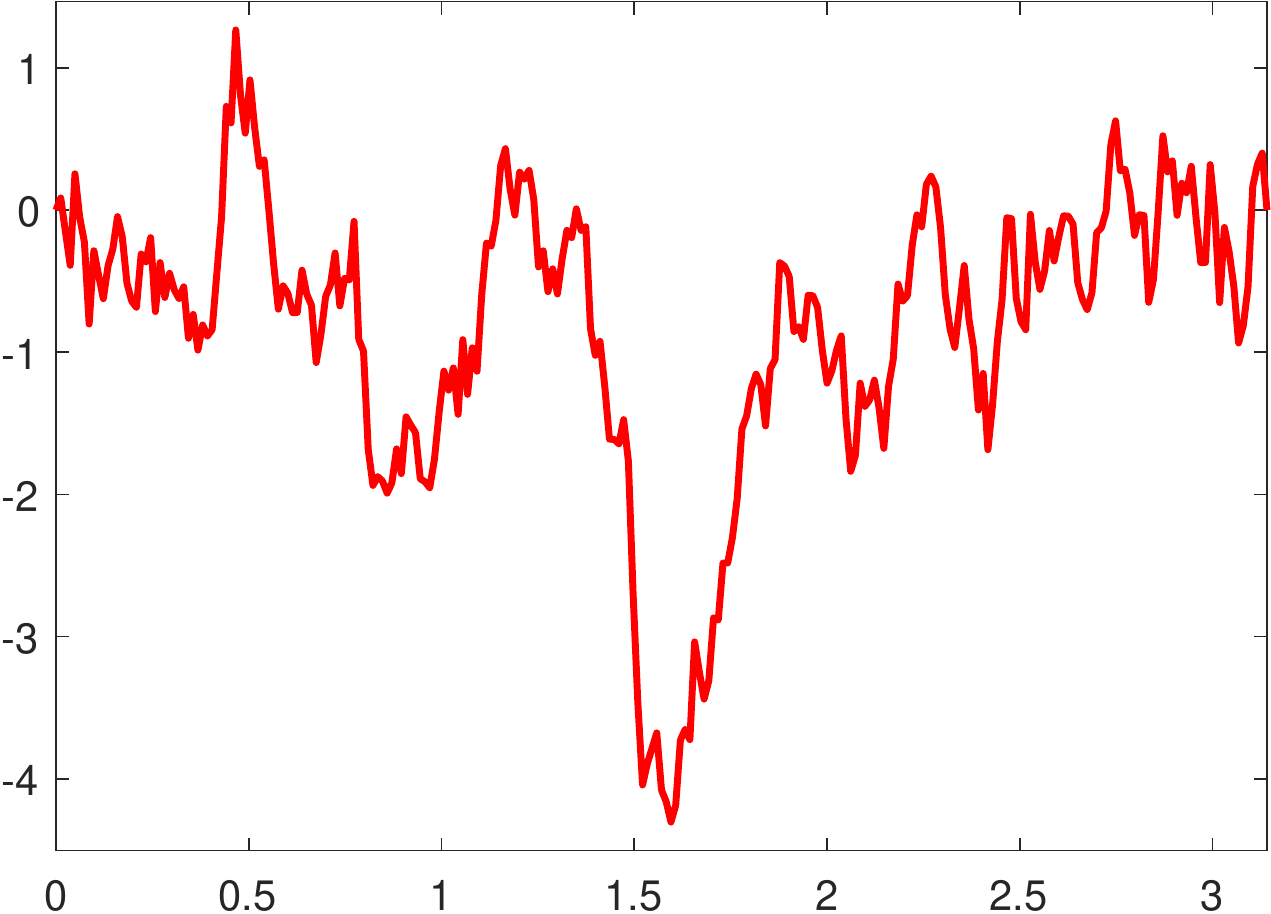}
		\caption{Truth $u^\dagger$}
	\end{subfigure}%
	\begin{subfigure}{.32\textwidth}
		\centering
		\includegraphics[width=5cm]{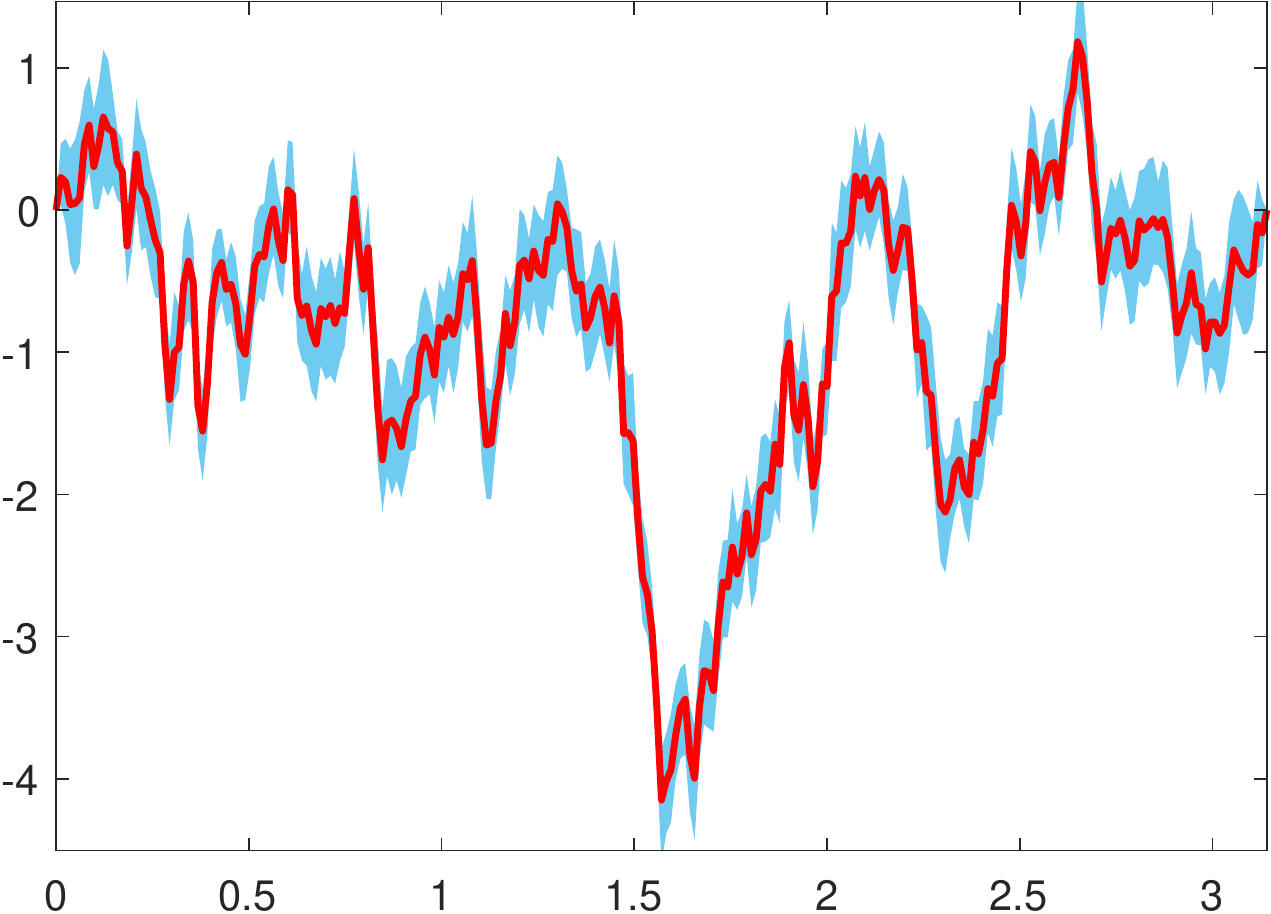}
		\caption{EKI}
	\end{subfigure}%
		\begin{subfigure}{.32\textwidth}
		\centering
		\includegraphics[width=5cm]{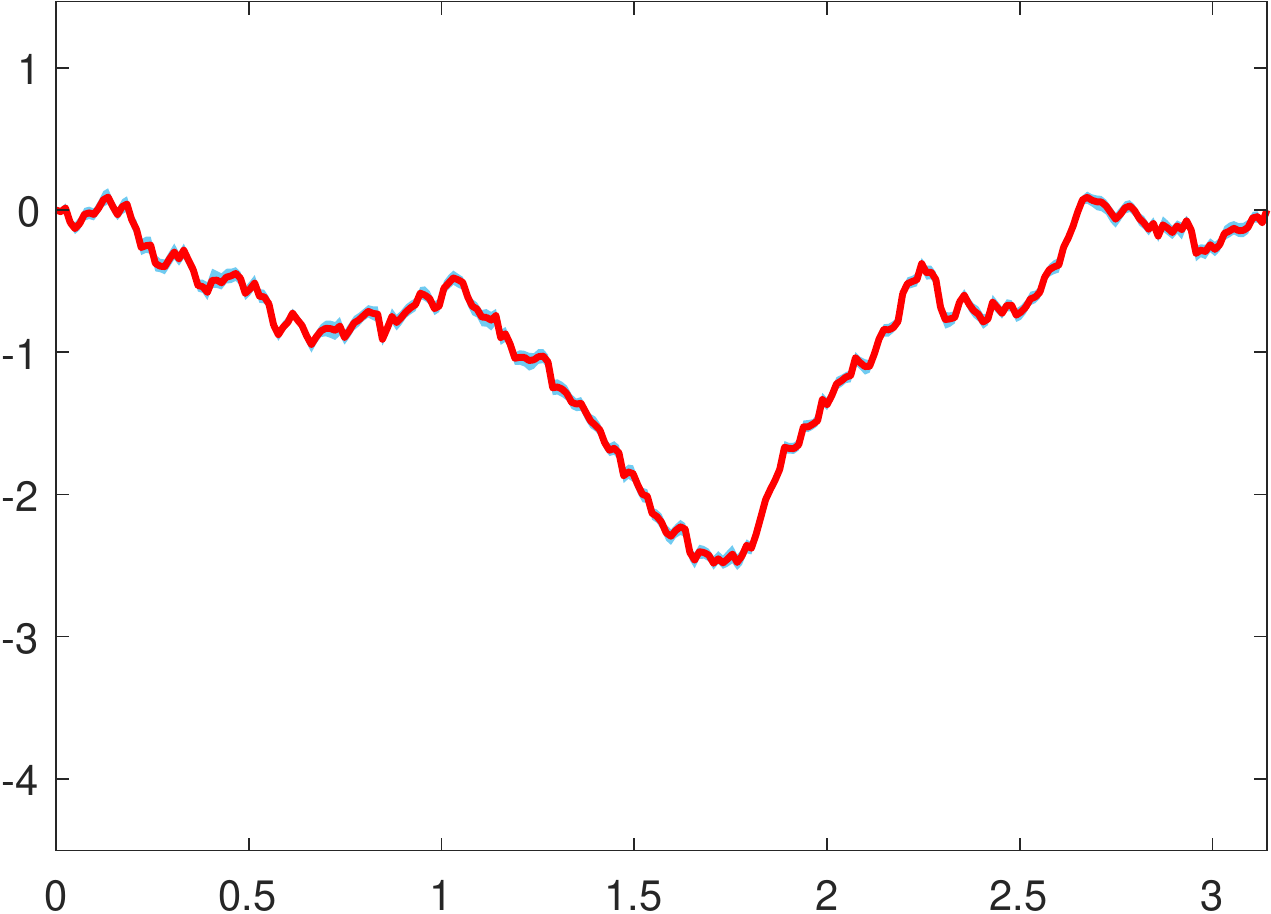}
		\caption{TEKI}
	\end{subfigure}%
	\vskip\baselineskip
	\begin{subfigure}{.32\textwidth}
		\centering
		\includegraphics[width=5cm]{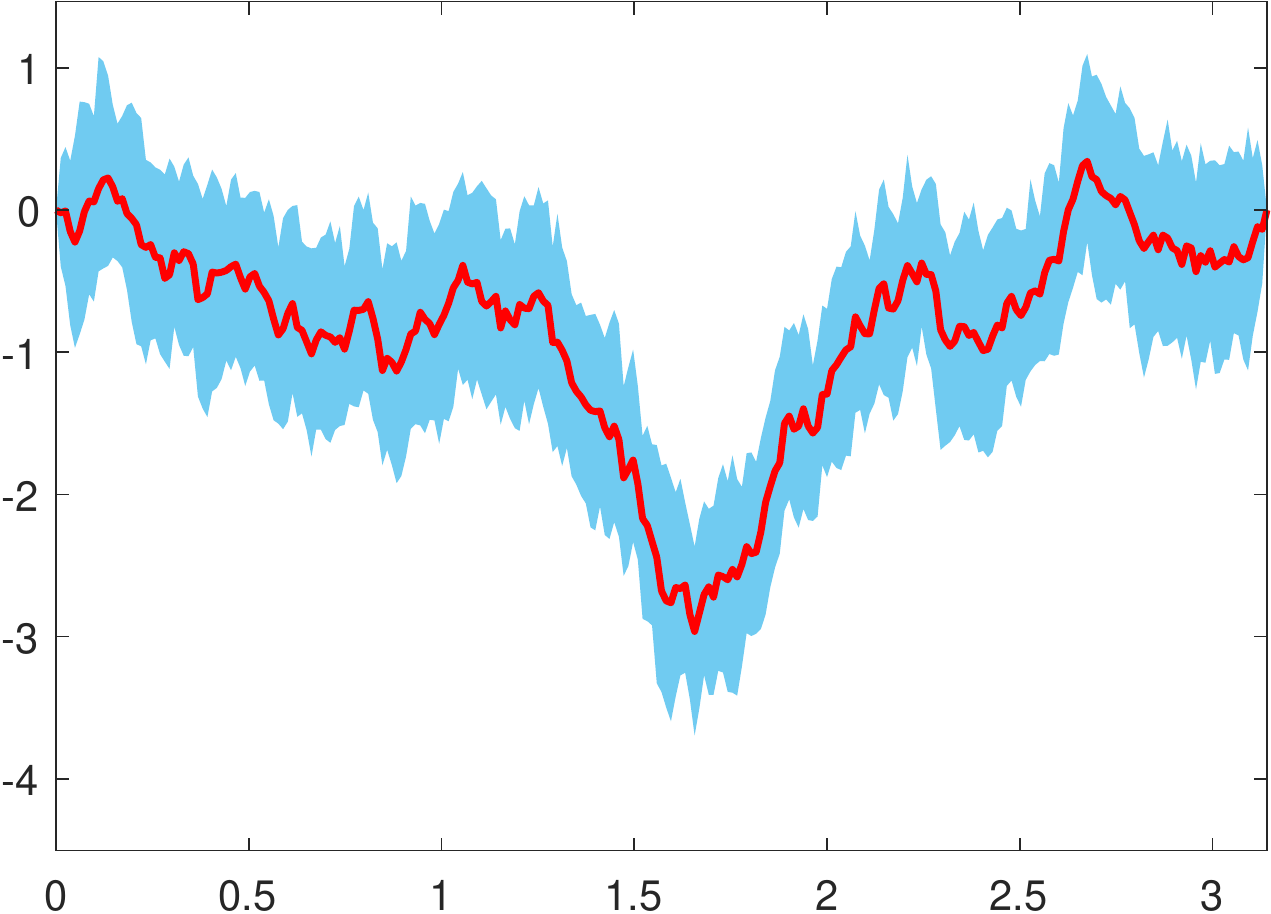}
		\caption{IEKF}
	\end{subfigure}%
	\begin{subfigure}{.32\textwidth}
		\centering
		\includegraphics[width=5cm]{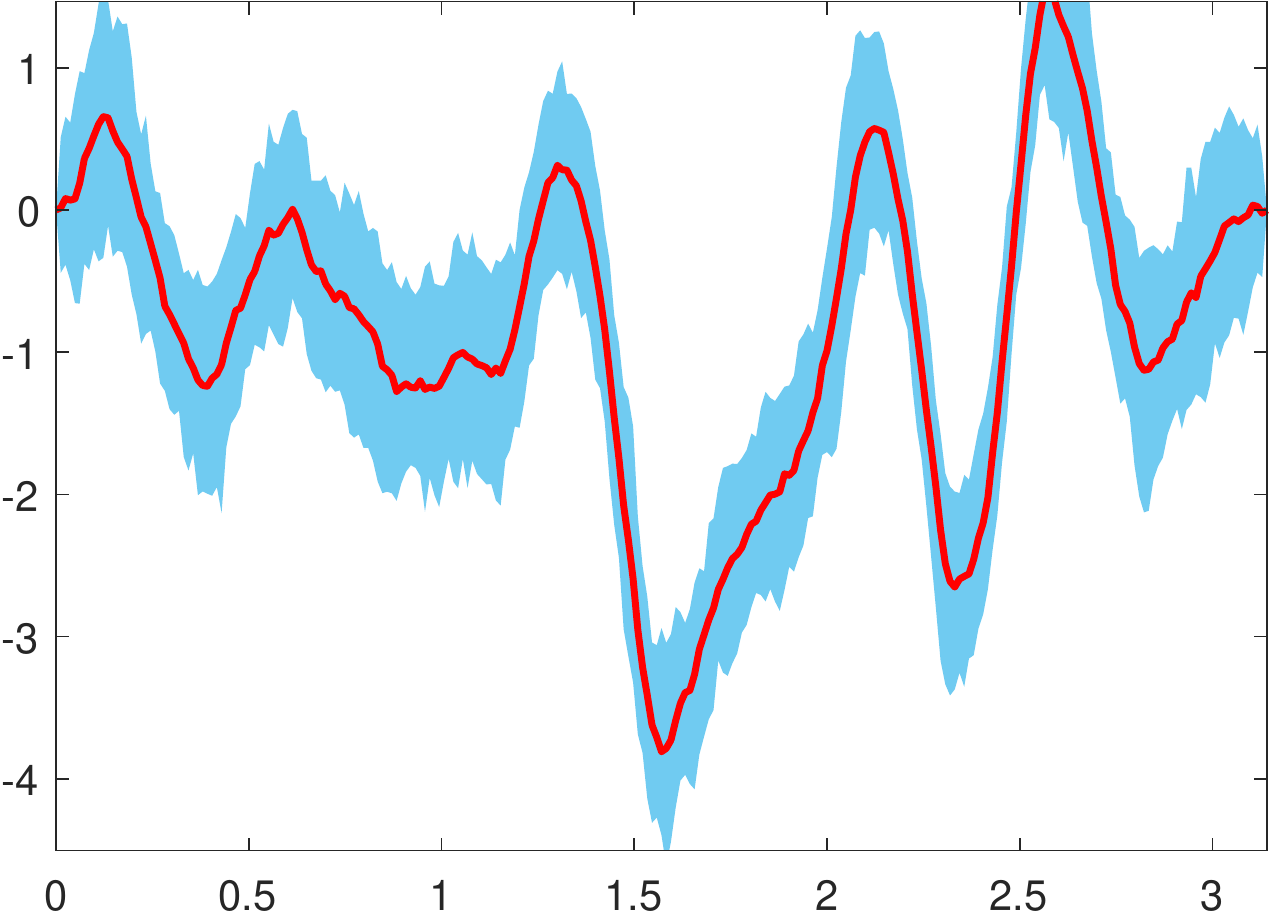}
		\caption{EKI-SL}
	\end{subfigure}%
	\begin{subfigure}{.32\textwidth}
		\centering
		\includegraphics[width=5cm]{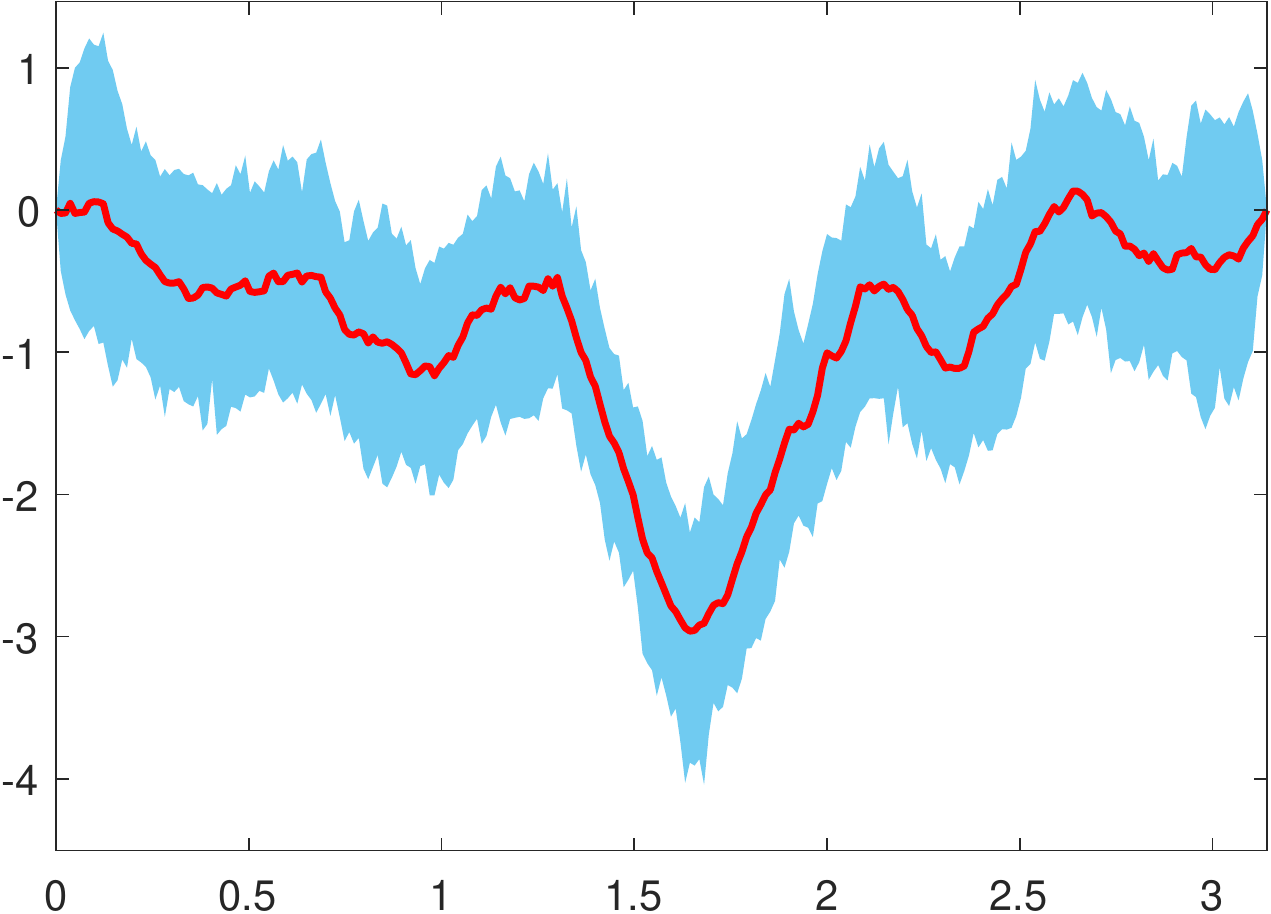}
		\caption{IEKF-SL}
	\end{subfigure}%
	 \caption{Ensemble mean (red) at the final iteration, with 10, 90-quantiles (blue).}
	 		\label{fig:e_ensem}
\end{figure}

\begin{figure}[htbp]
	\centering
	\begin{subfigure}{.48\textwidth}
		\centering
		\includegraphics[width=7cm]{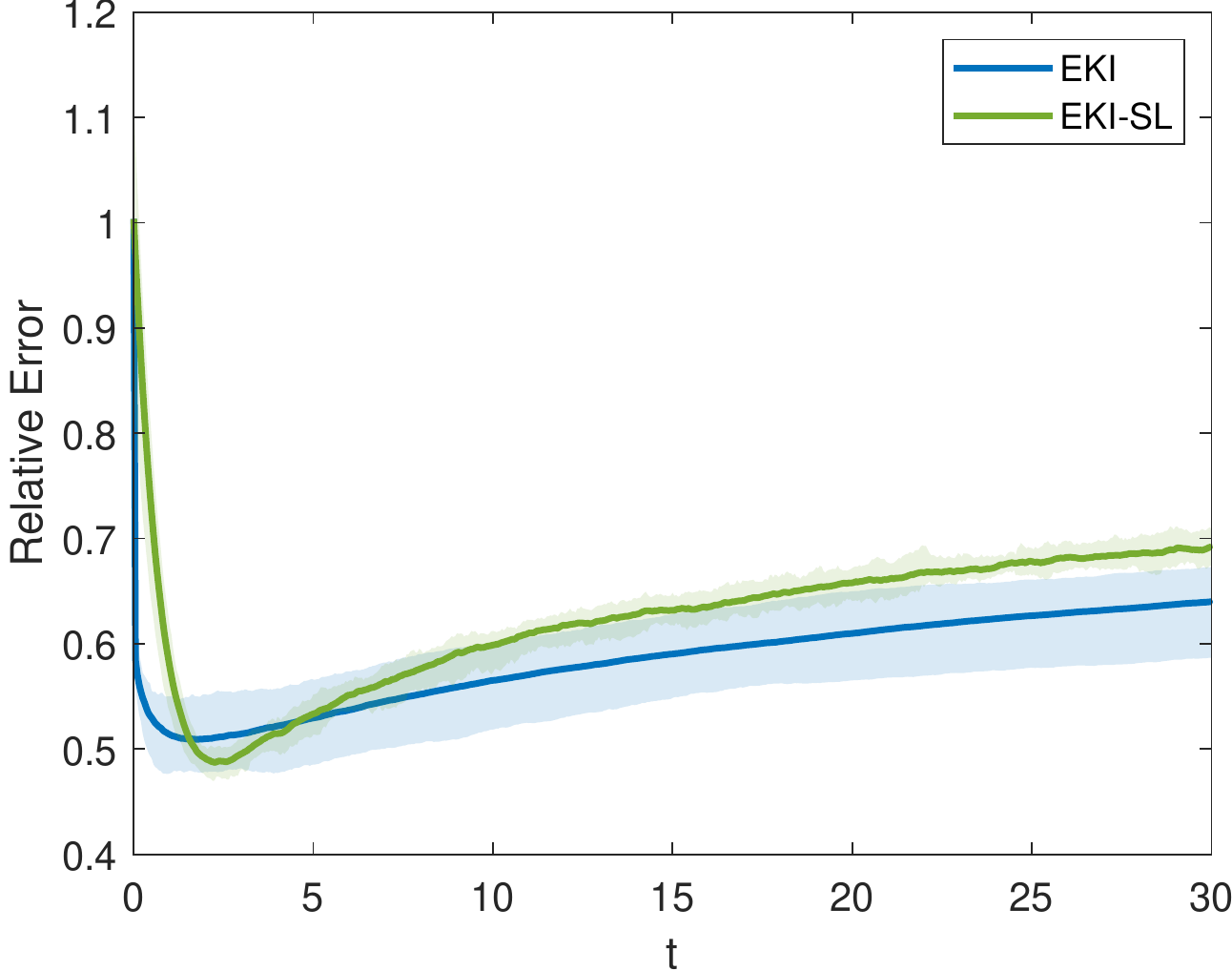}
	\end{subfigure}%
	\begin{subfigure}{.48\textwidth}
		\centering
		\includegraphics[width=7cm]{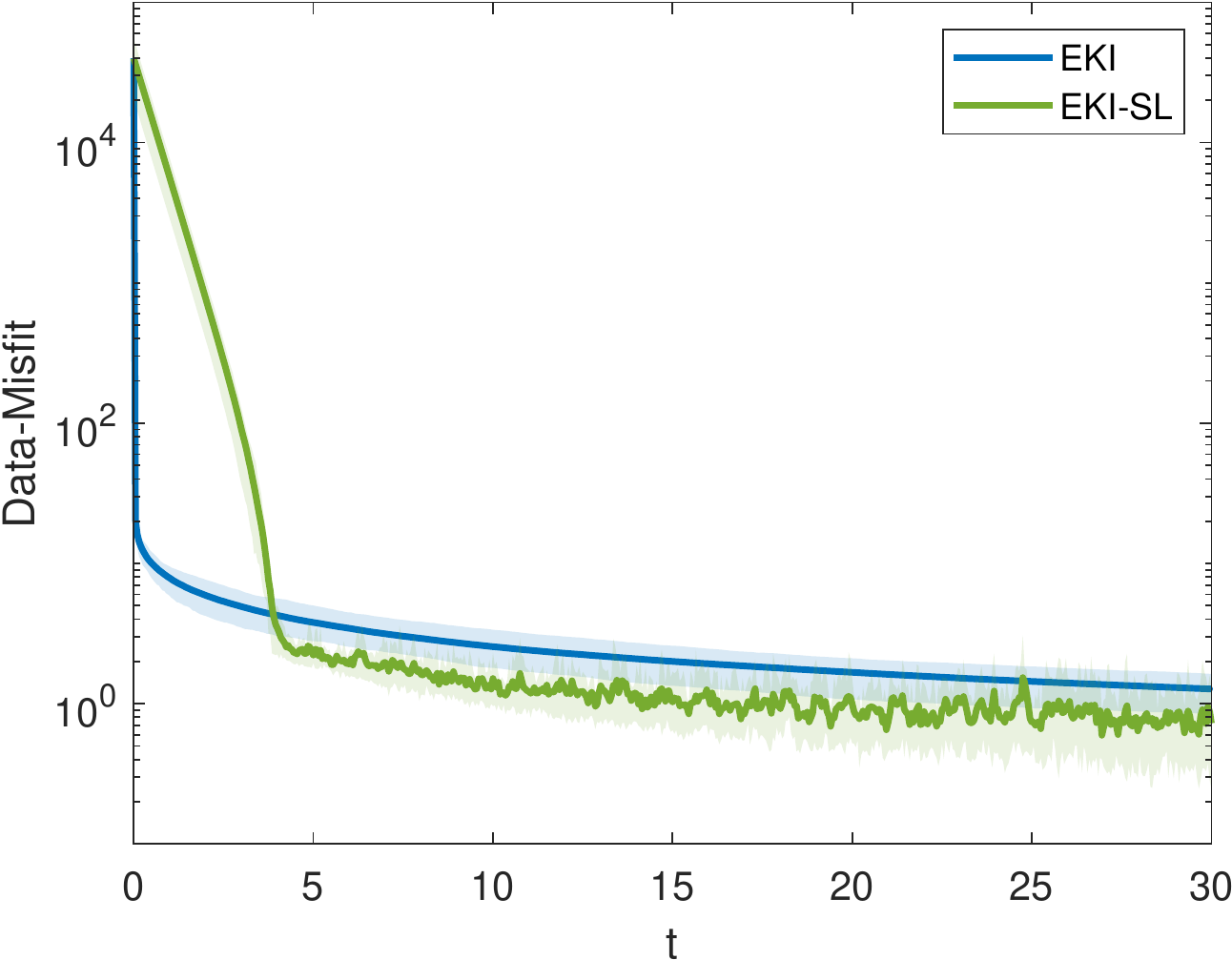}
	\end{subfigure}%
	\caption{EKI \& EKI-SL: Relative errors and data misfit  w.r.t time $t$.}
 		\label{fig:e_dm}
\end{figure}

\begin{figure}[ht]
	\centering
	\begin{subfigure}{.48\textwidth}
		\centering
		\includegraphics[width=7cm]{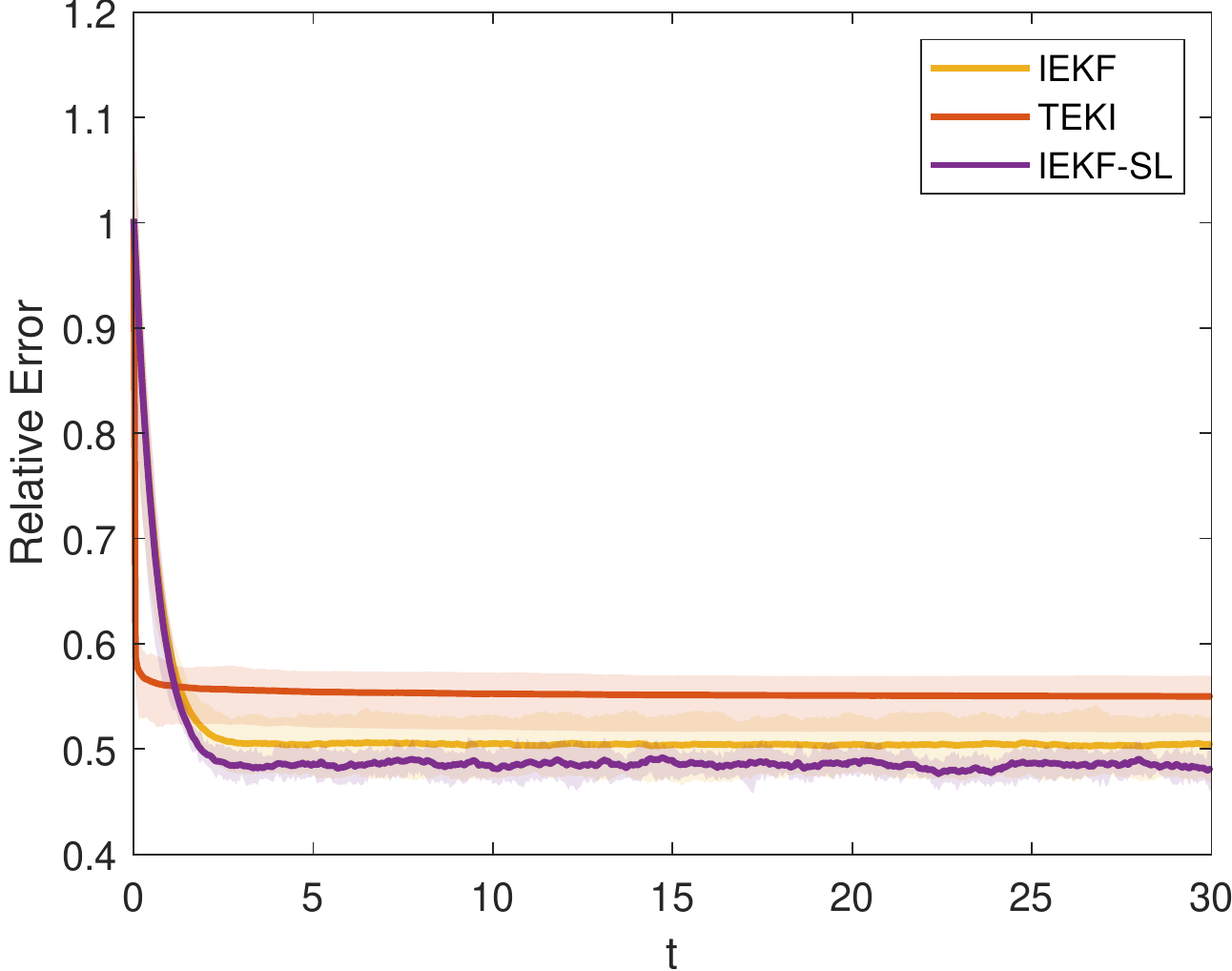}
	\end{subfigure}%
	\begin{subfigure}{.48\textwidth}
		\centering
		\includegraphics[width=7cm]{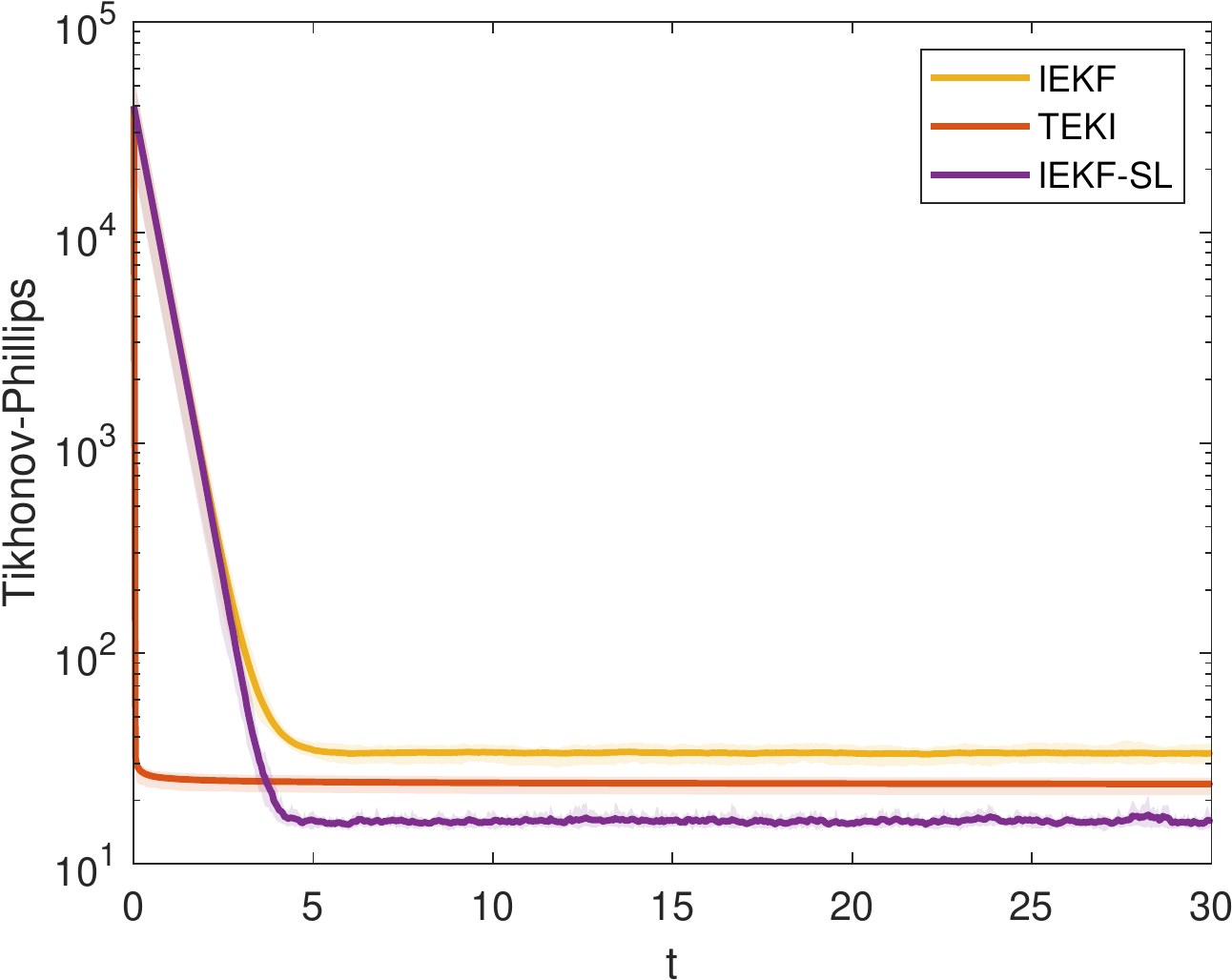}
	\end{subfigure}%
	\caption{IEKF, TEKI \& IEKF-SL: Relative errors and Tikhonov-Phillips objective w.r.t time $t$.}
	 		\label{fig:e_tp}
\end{figure}

\subsection{Lorenz-96 Model}\label{ssec:Lorenz96}
In this subsection we investigate the use of iterative ensemble Kalman methods to recover the initial condition of the Lorenz-96 system from partial and noisy observation of the solution at two positive times. The experimental framework is taken from  \cite{CT19} and is illustrative of the use of iterative ensemble Kalman methods in numerical weather forecasting. 
\subsubsection{Problem Setup}
 Consider the dynamical system
\begin{equation}
\label{eq:l96}
\begin{gathered}
\frac{dz_l}{dt} = z_{l-1}(z_{l+1} - z_{l-2}) - z_l + F, \quad l=1,\ldots,d, \\
z_0 = z_d, \quad z_{d+1} = z_1, \quad z_{-1} = z_{d-1}.
\end{gathered}
\end{equation}
Here $z_l$ denotes the $l^{th}$ coordinate of the current state of the system and $F$ is a constant forcing with default value of $F = 8$. The dimension $d$ is often chosen as 40. We want to recover the initial condition
\begin{equation*}
u := (z_1(0), \dots, z_d(0))^T
\end{equation*}
of \eqref{eq:l96} from noisy partial measurements at discrete times $\{s_i\}_{i=1}^{I}$:
\begin{equation}\label{eq:L96_data}
y_{i,j} = u_{l_j}(s_i) + \eta_{i, j},
\end{equation}
where $\{l_j\}_{j=1}^J \subset \{1,2,\dots, d\}$ is the subset of observed coordinates, and $\eta_{i,j}\sim\Nc(0, \gamma^2)$ are assumed to be independent. In our numerical experiments we set $I = 2$, $J = 20$, $\{s_1, s_2\} = \{0.3, 0.6\}$, $\{l_j\}_{j=1}^{20} = \{1, 3, 5, \dots, 39\}$. We set the prior on $u$ to be a Gaussian $\Nc(0, 2 I_d)$.  The true parameter $u^{\dagger} \in \R^{40}$ is shown in Figure \ref{fig:l96_ensem}, and is used to generate the observation data $\{y_{i,j}\}$ according to \eqref{eq:L96_data} with noise level $\gamma = 0.01.$ 

\subsubsection{Implementation Details and Numerical Results}
We set the ensemble size to be $N = 50$. The initial ensemble $\{u_0^{(n)}\}_{n=1}^N$ is drawn independently from the prior. The length-step $\alpha$ is fixed to be 0.05 for all methods. We run each algorithm up to time $T = 30$, which corresponds to 600 iterations.

This is a moderately high dimensional nonlinear problem, where the forward model involves a black-box solver of differential equations. Figure \ref{fig:l96_ensem} clearly indicates an ensemble collapse for the EKI and TEKI methods. Although they are still capable of finding a descending direction of the loss direction (see Figures \ref{fig:l96_dm} and \ref{fig:l96_tp}), iterates get stuck in a local minimum. In contrast, we can see the advantage of IEKF, EKI-SL and TEKI-SL in this setting: intuitively, a broader spread of the ensemble allows these methods to `explore' different regions in the input space, and thereby to find a solution with potentially lower loss.

We notice that in practice `covariance inflation' is often applied in EKI and TEKI algorithms, by manually injecting small random noise after each ensemble update. In general, this technique will `force' a non-zero empirical covariance, prevent ensemble collapse and boost the performence of EKI and TEKI. However,  the amount of noise injected is an additional hyperparameter that should be chosen manually, which should depend on the input dimension, observation noise, etc. Here we give a fair comparison of the different methods introduced, under the same time-stepping setting, with as few hyperparameters as possible. 
\begin{figure}[htbp]
	\centering
	\begin{subfigure}{.32\textwidth}
		\centering
		\includegraphics[width=5cm]{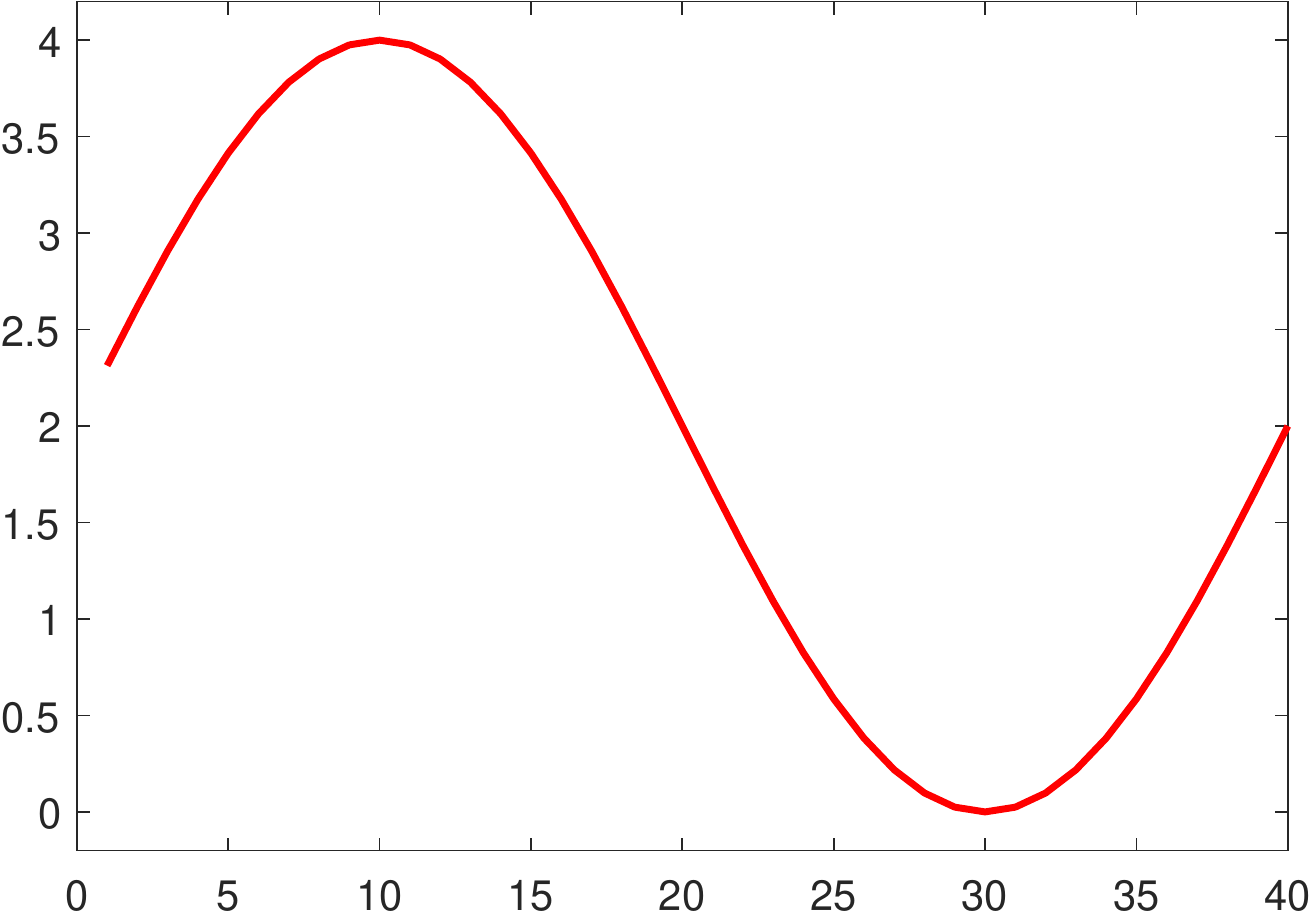}
		\caption{Truth $u^\dagger$.}
	\end{subfigure}%
	\begin{subfigure}{.32\textwidth}
		\centering
		\includegraphics[width=5cm]{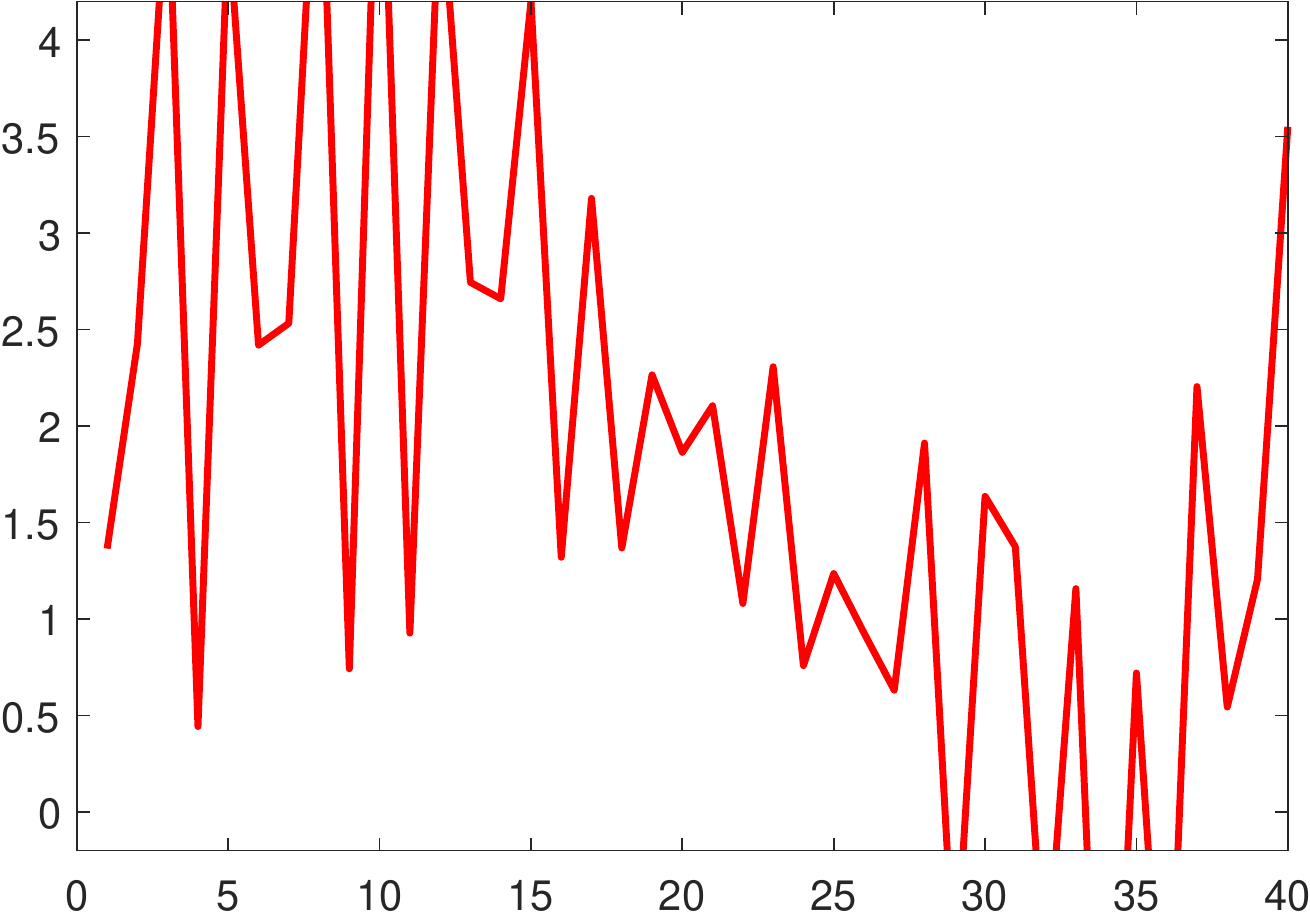}
		\caption{EKI.}
	\end{subfigure}%
	\begin{subfigure}{.32\textwidth}
		\centering
		\includegraphics[width=5cm]{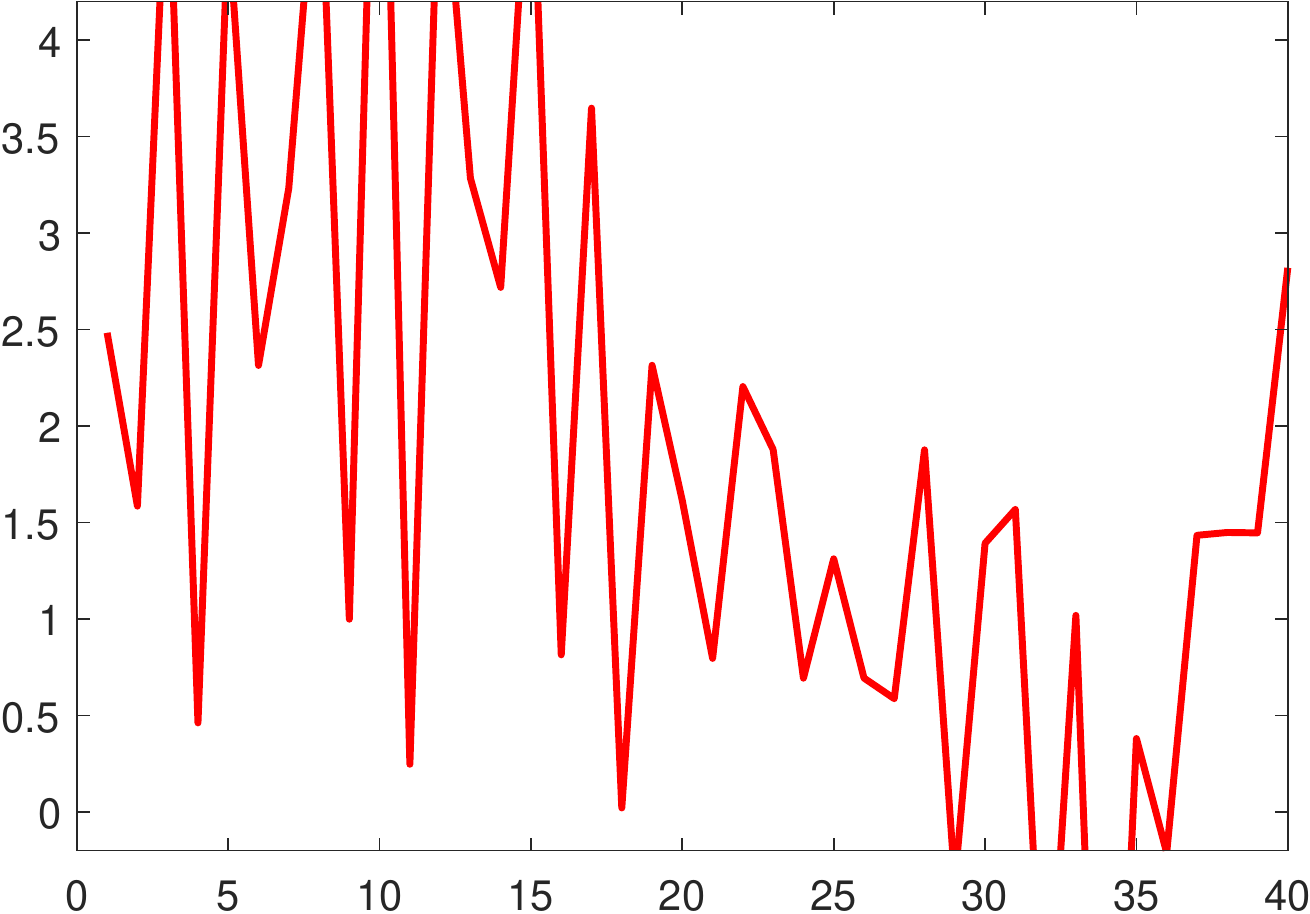}
		\caption{TEKI.}
	\end{subfigure}%
	\vskip\baselineskip
	\begin{subfigure}{.32\textwidth}
		\centering
		\includegraphics[width=5cm]{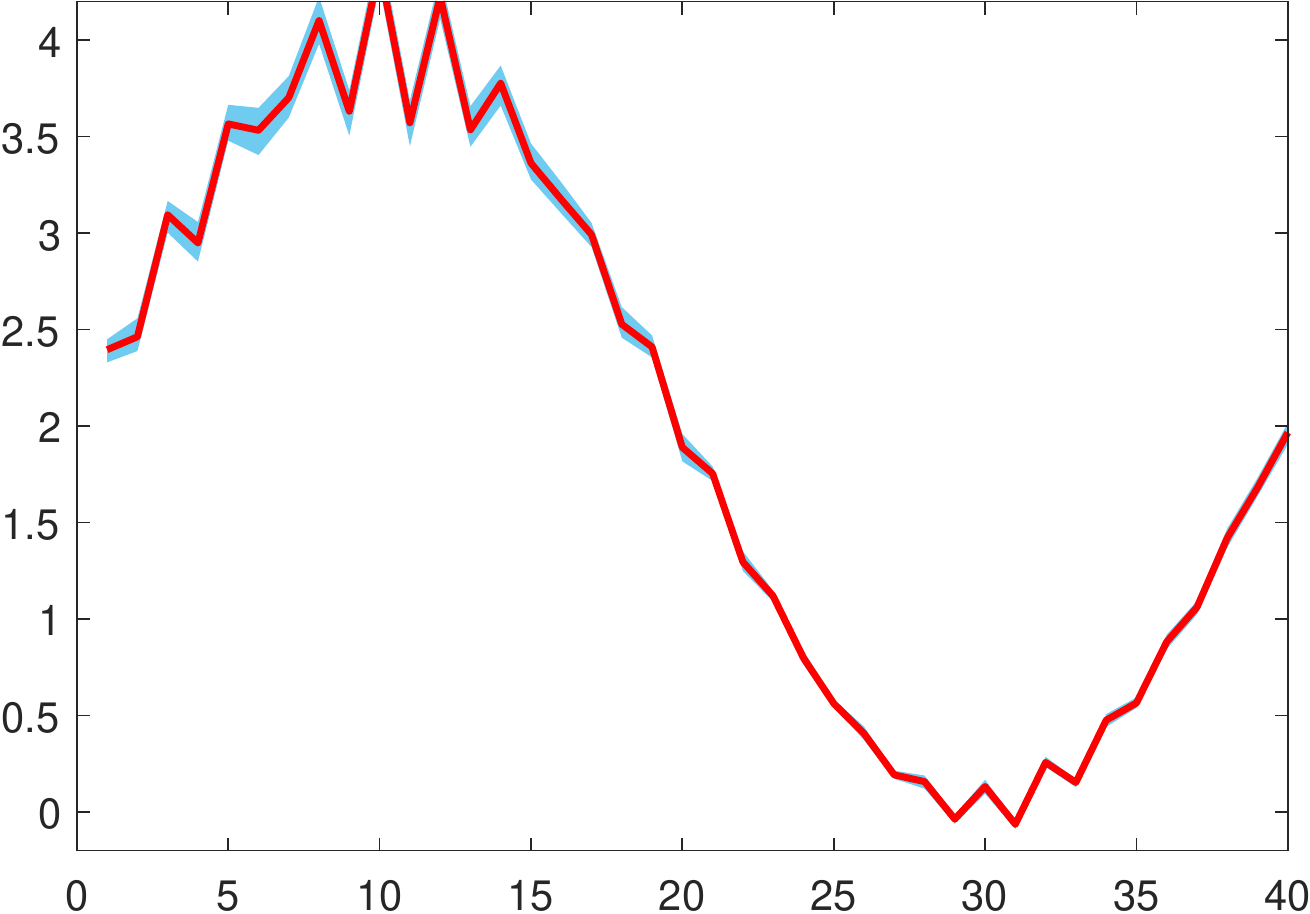}
		\caption{IEKF.}
	\end{subfigure}%
	\begin{subfigure}{.32\textwidth}
		\centering
		\includegraphics[width=5cm]{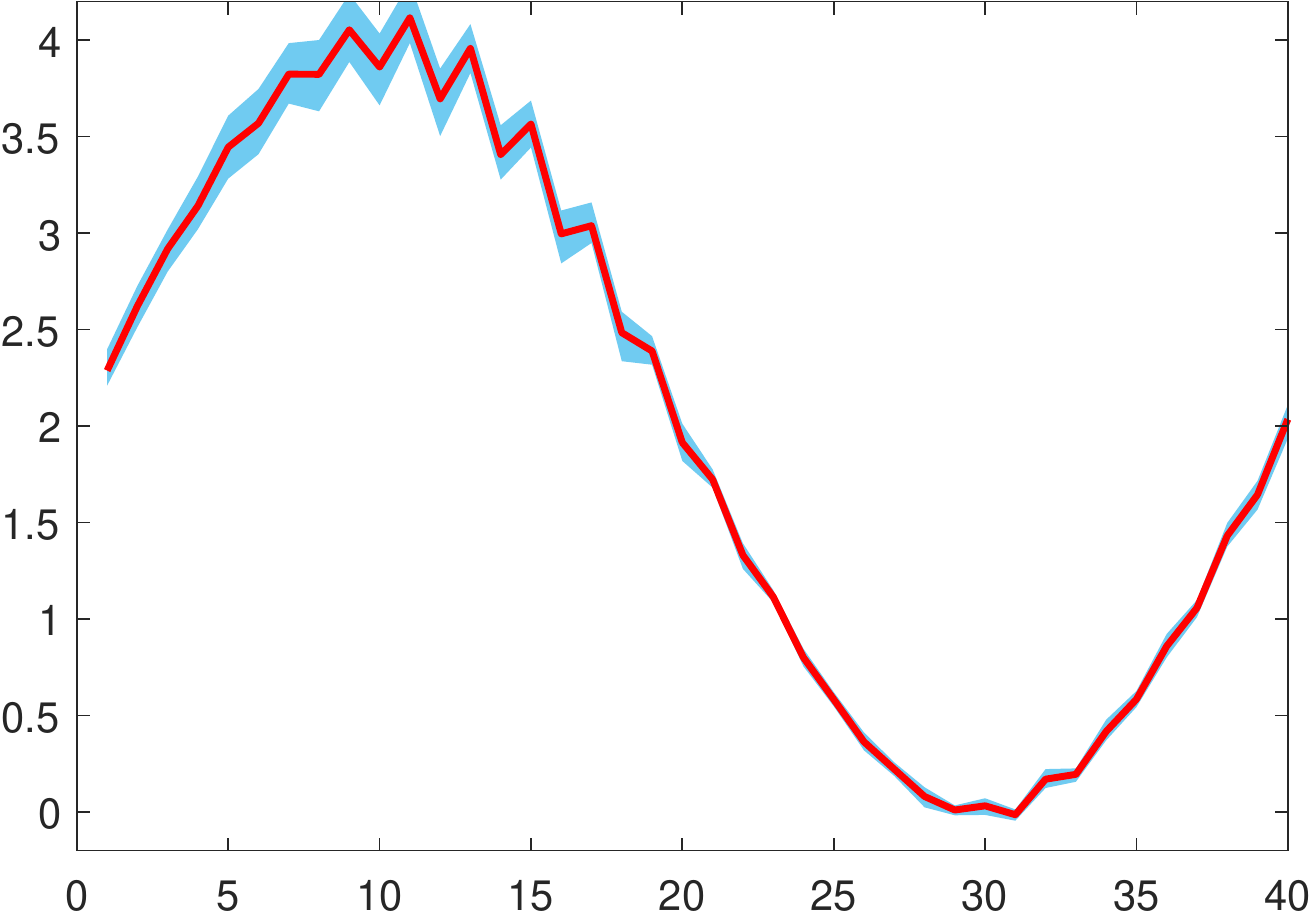}
		\caption{EKI-SL.}
	\end{subfigure}%
	\begin{subfigure}{.32\textwidth}
		\centering
		\includegraphics[width=5cm]{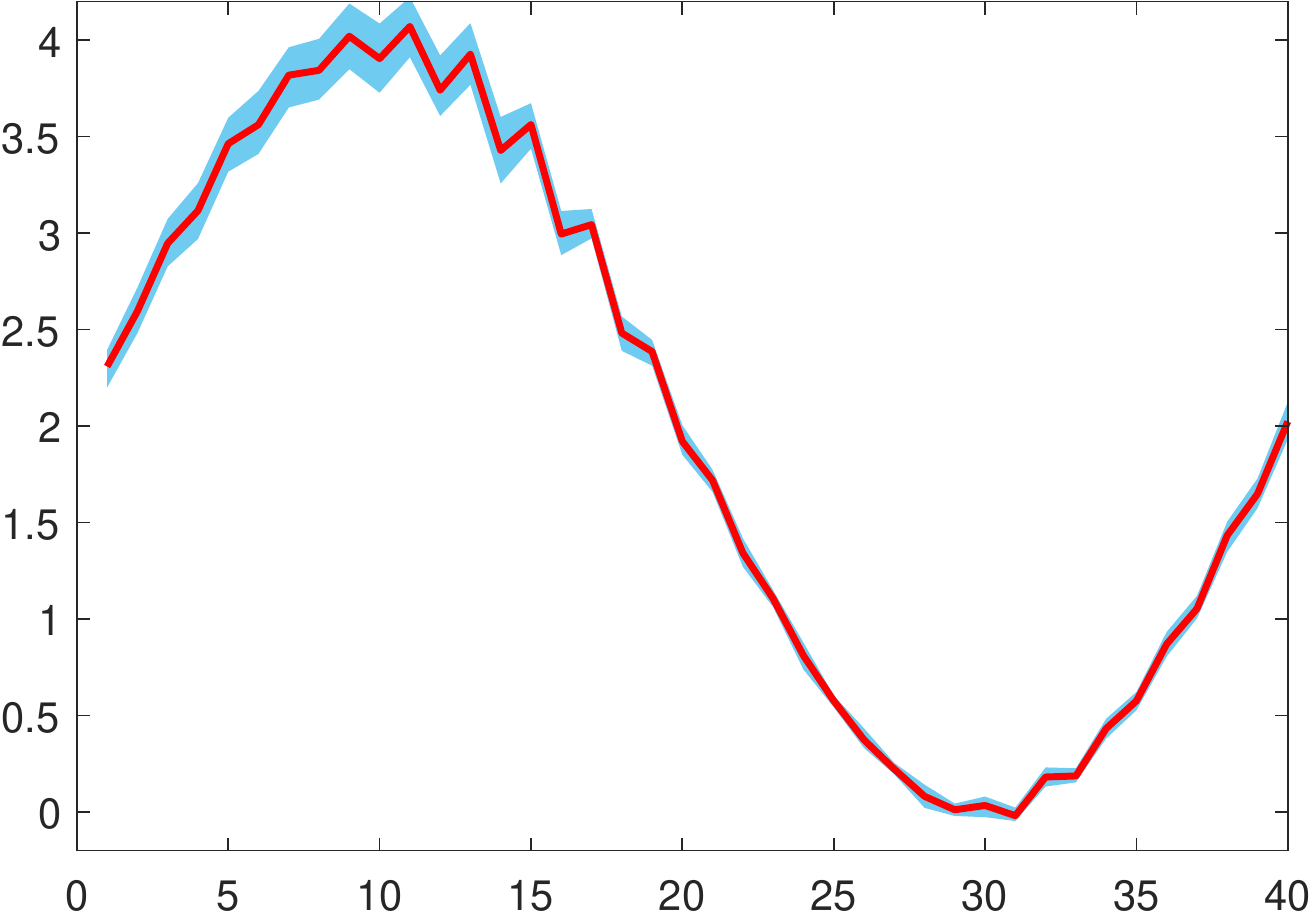}
		\caption{IEKF-SL.}
	\end{subfigure}%
	\caption{Ensemble mean (red) at the final iteration, with 10, 90-quantiles (blue).}
	 		\label{fig:l96_ensem}
\end{figure}

\begin{figure}[htbp]
	\centering
	\begin{subfigure}{.48\textwidth}
		\centering
		\includegraphics[width=7cm]{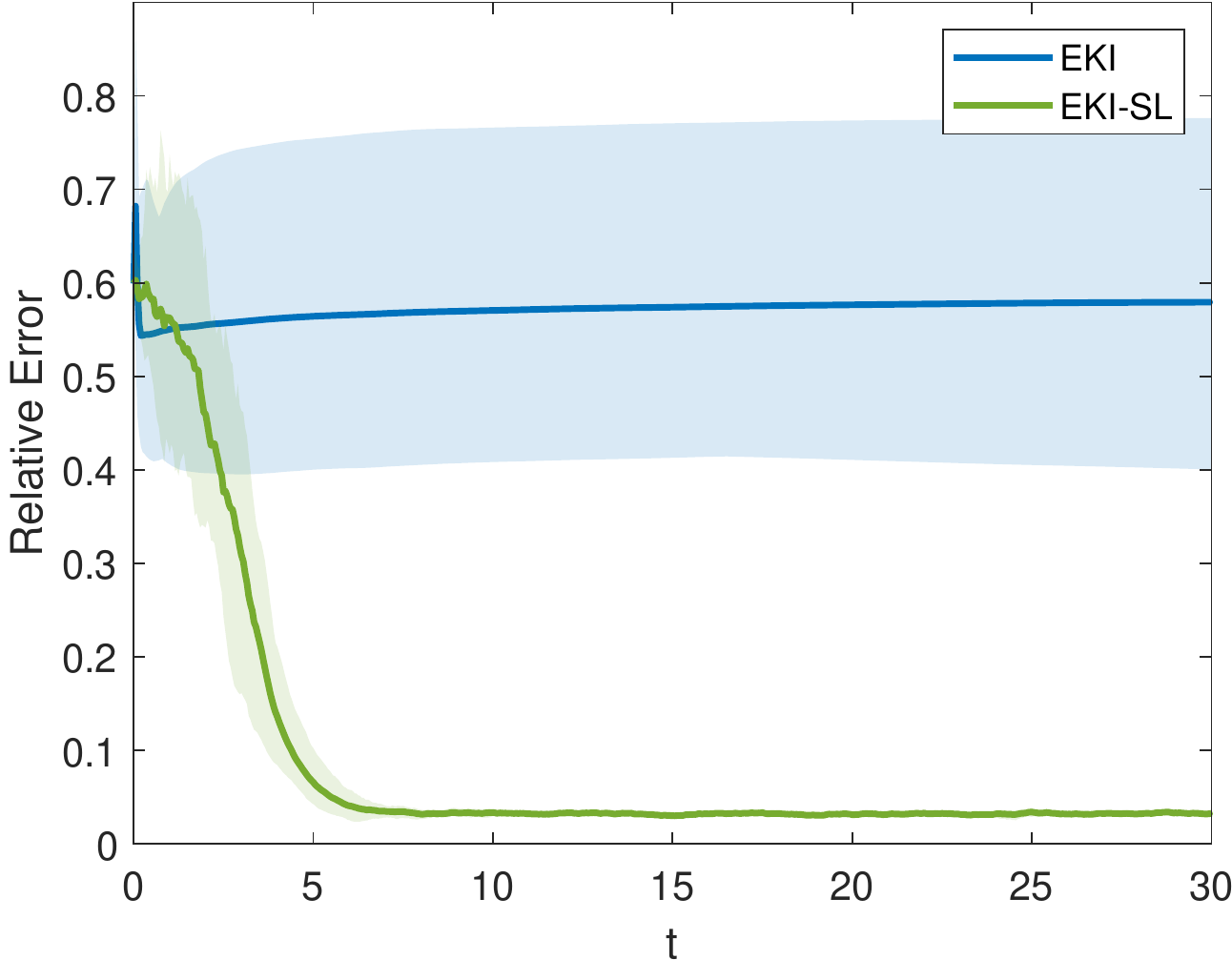}
	\end{subfigure}%
	\begin{subfigure}{.48\textwidth}
		\centering
		\includegraphics[width=7cm]{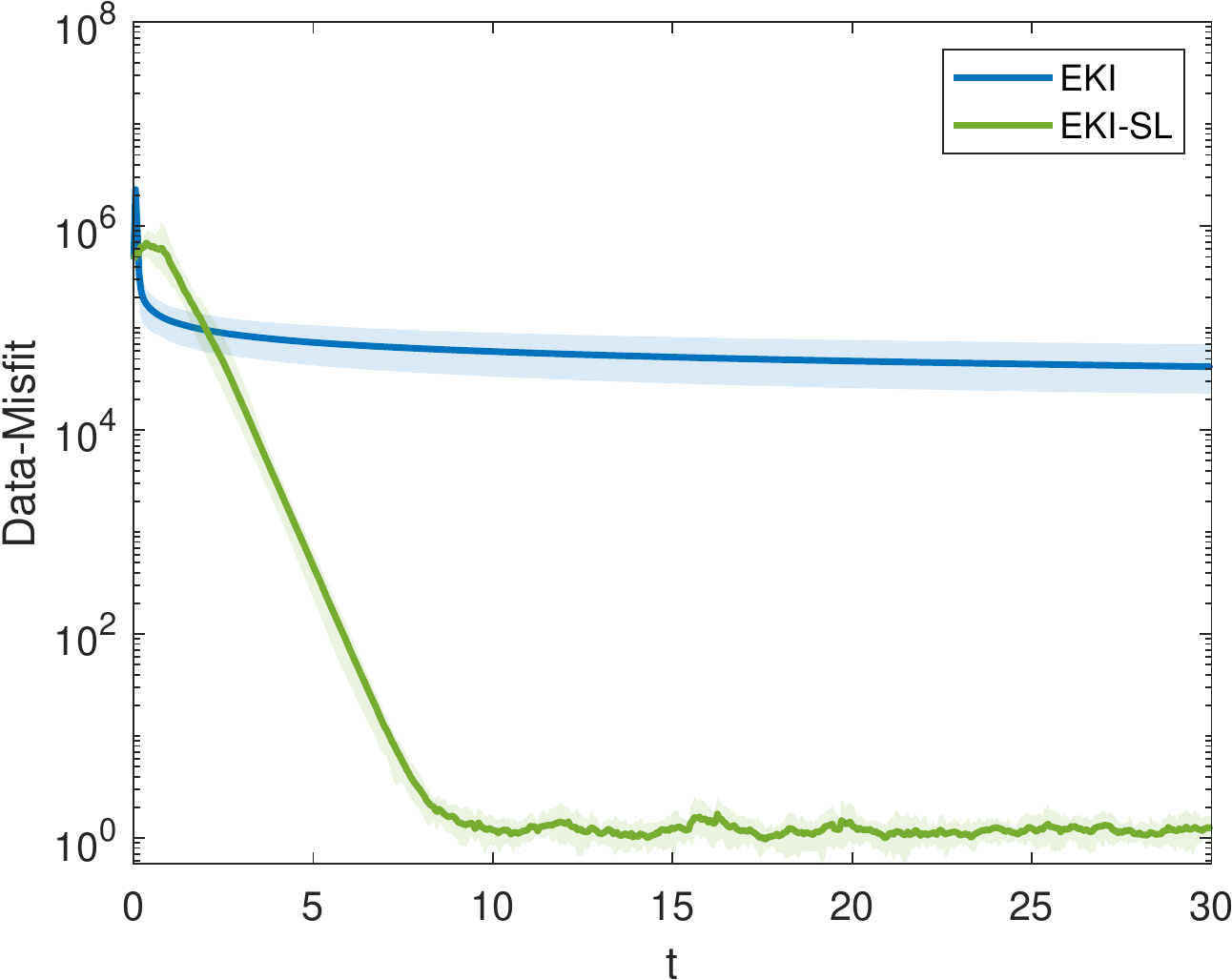}
	\end{subfigure}%
	\caption{EKI \& EKI-SL: Relative errors and data misfit  w.r.t time $t$.}
	 		\label{fig:l96_dm}
\end{figure}

\begin{figure}[ht]
	\centering
	\begin{subfigure}{.48\textwidth}
		\centering
		\includegraphics[width=7cm]{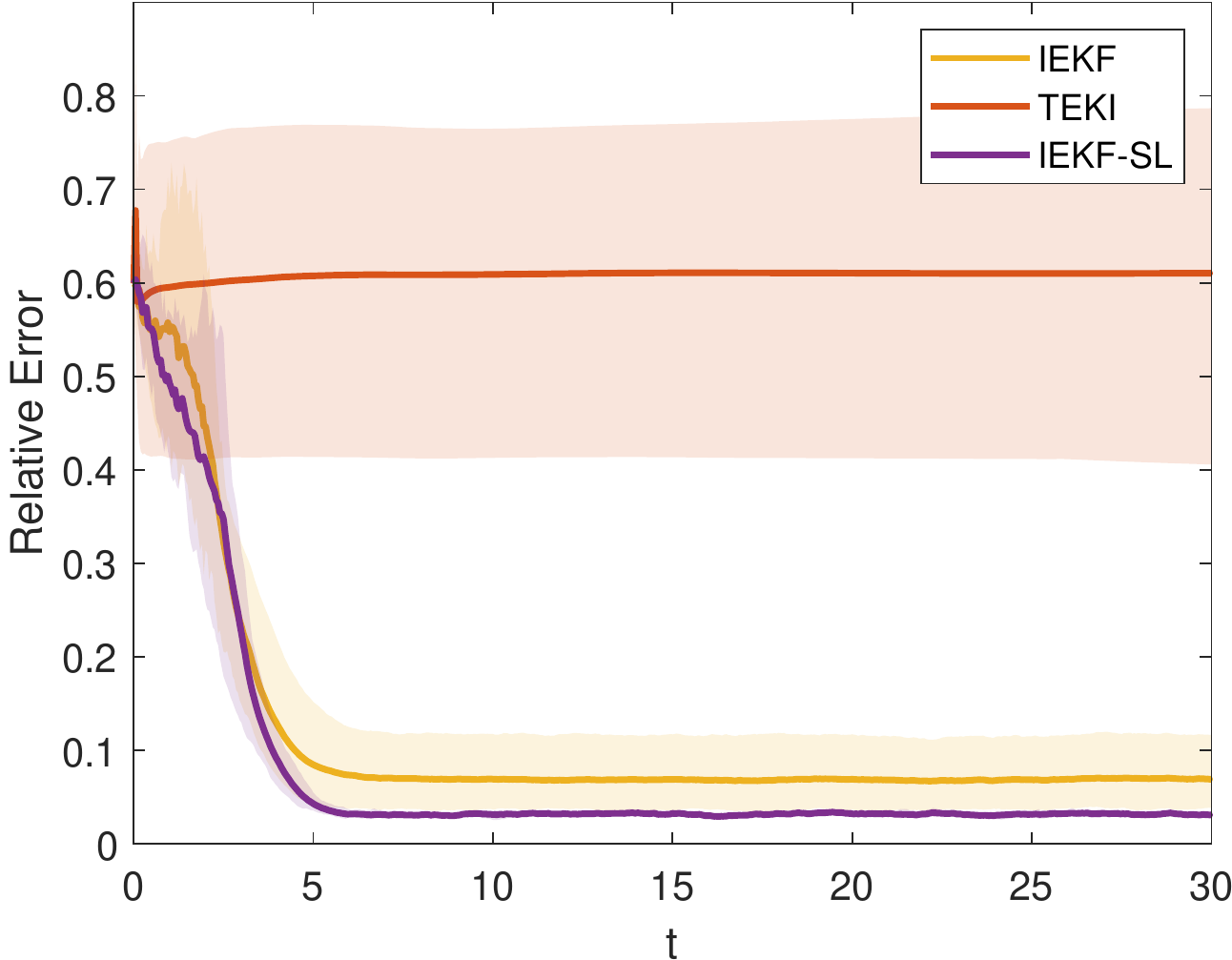}
	\end{subfigure}%
	\begin{subfigure}{.48\textwidth}
		\centering
		\includegraphics[width=7cm]{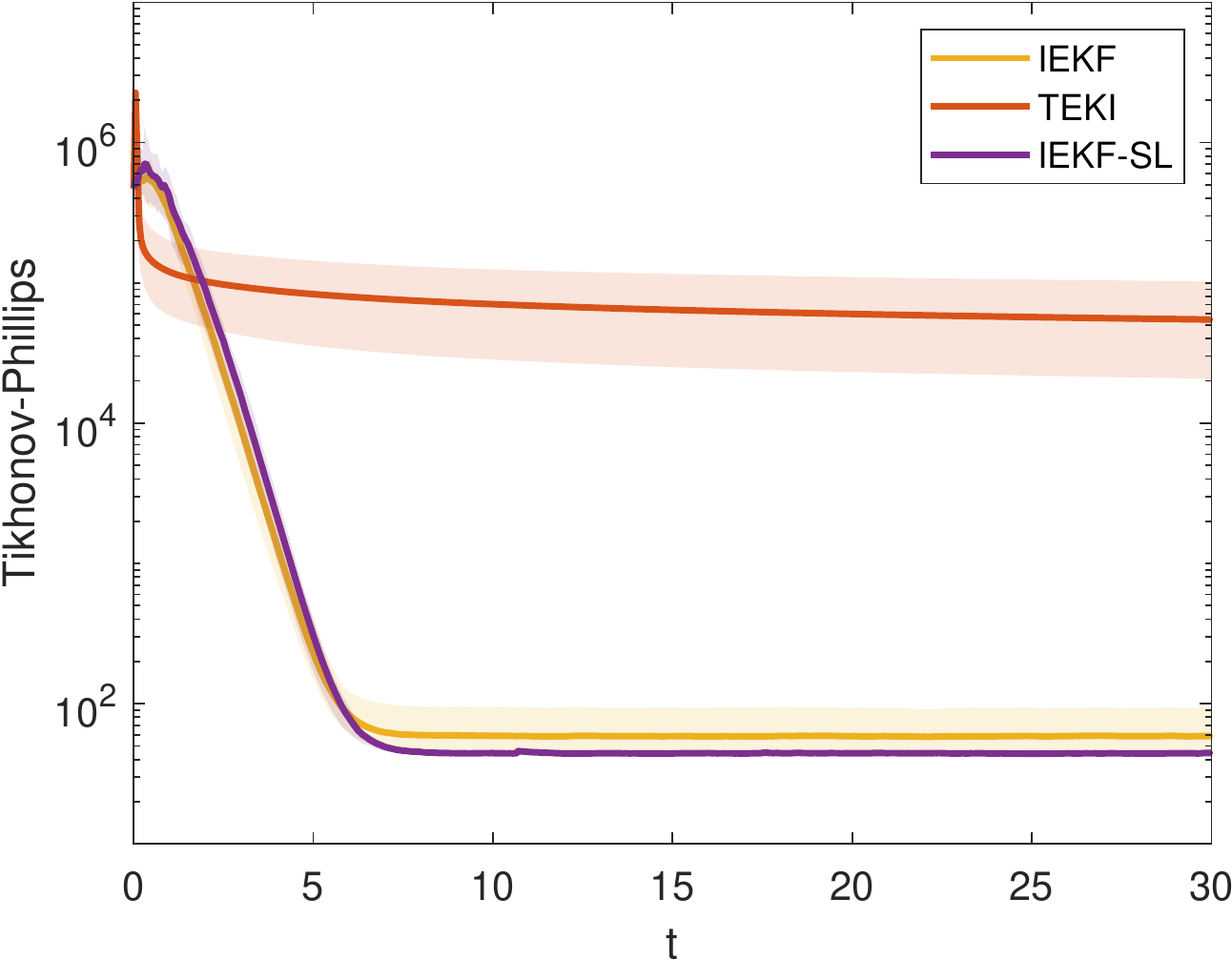}
	\end{subfigure}%
	\caption{IEKF, TEKI \& IEKF-SL: Relative errors and Tikhonov-Phillips objective w.r.t time $t$.}
	 		\label{fig:l96_tp}
\end{figure}
\FloatBarrier

\subsection{High-Dimensional Nonlinear Regression}\label{ssec:nonlinearregression}
In this subsection we consider a nonlinear regression problem with a highly oscillatory forward map introduced in \cite{EFL18}, where the authors investigate the use of iterative ensemble Kalman methods to train neural networks without back propagation. 
\subsubsection{Problem Setup}
 We consider a nonlinear regression problem $$y = h(u) + \eta, \quad \eta\sim \Nc(0, \gamma^2 I_k),$$ where $u \in \R^d, y \in \R^k$, and $h$ is defined by
\begin{equation}
h(u):= A u + \sin(c B u),
\end{equation}
where $A, B \in \R^{k\times d}$ are random matrices with independent $\Nc(0, 1)$ entries. We set $d = 200$, $k = 150$ and $c = 20$. We want to recover $u$ from $y$. We assume that the unknown $u$ has a Gaussian prior  $u \sim \Nc(0, 4I_d)$. The true parameter $u^\dagger$ is set to be $2\cdot\bf{1}$, where $\bm{1}$ is the all-one vector. Observation data is generated as $y = h(u^\dagger) + \eta$.

By definition, $h$ is highly oscillatory, and we may expect that the loss function, either Tikhonov-Phillips or data misfit objective, will have many local minima. Figure \ref{fig:regress_landscape} visualizes the Tihonov-Phillips objective function $\Jtp(u)$ with respect to two randomly choosen coordinates while other coordinates are fixed to value of 2. The data misfit objective exhibits a similar behavior.

\begin{figure}[htbp]
	\centering
	\includegraphics[width=7cm]{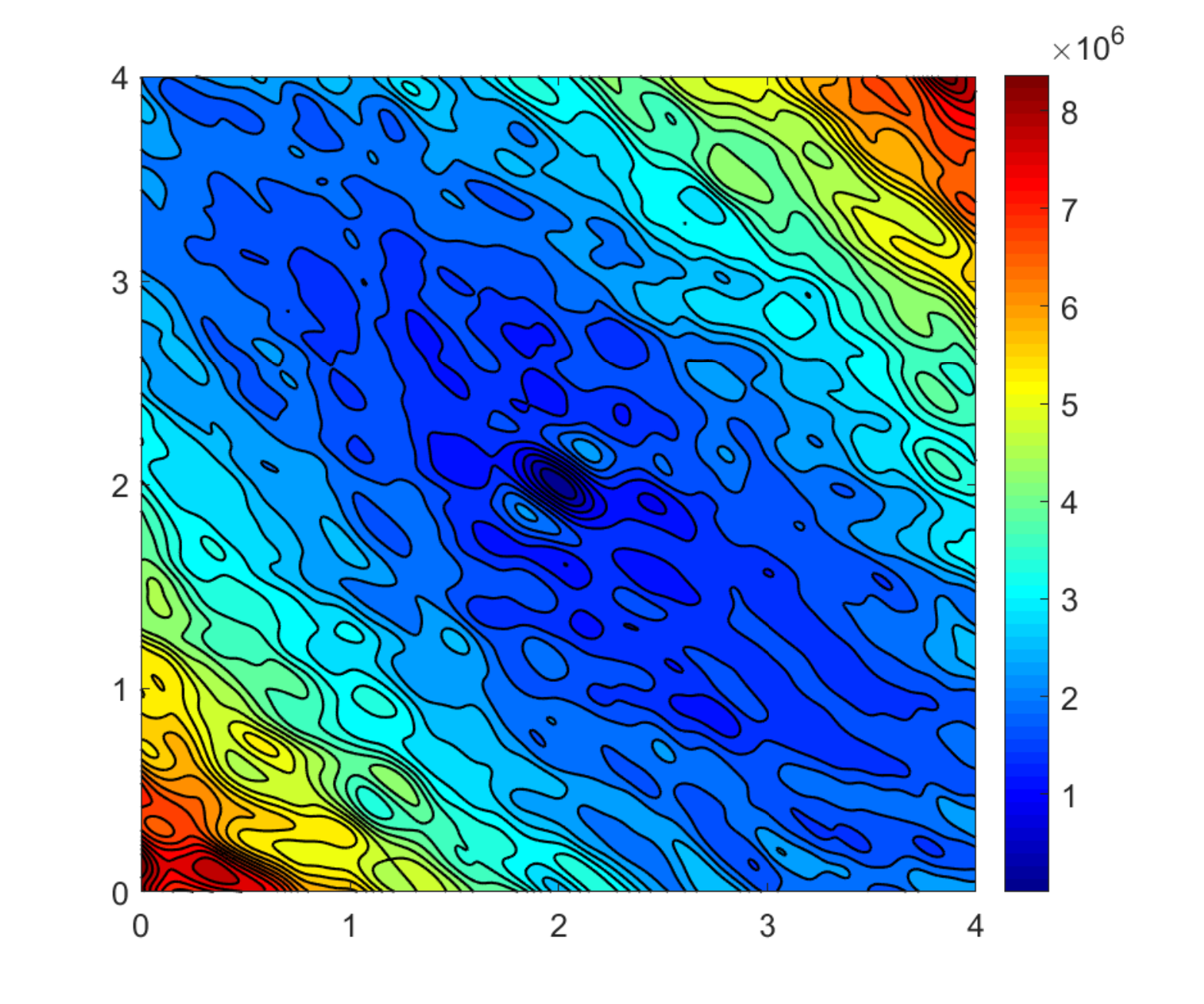}
	\caption{Tikhonov-Phillips objective function with respect to two randomly chosen coordinates.}
	\label{fig:regress_landscape}
\end{figure}

\subsubsection{Implementation Details and Numerical Results}
We set the ensemble size to be $N = 50$. The initial ensemble $\{u_0^{(n)}\}_{n=1}^N$ is drawn independently from the prior. The length-step $\alpha$ is fixed to be 0.05 for all methods. We set $\gamma = 0.01$ for the observation noise. We run each algorithm up to time $T = 30$, which corresponds to 600 iterations.

We notice that this is a difficult problem, due to its high dimensionality and nonlinearity. All methods except for IEKF-SL are not capable of reconstructing the truth. In particular, from Figures \ref{fig:regress_ensem}, \ref{fig:regress_dm} and \ref{fig:regress_tp}, both EKI and TEKI do a poor job with relative error larger than 1, while IEKF and EKI-SL have slightly better performance. It is worth noticing from Figure \ref{fig:regress_tp} that IEKF-SL has a larger Tikhonov-Phillips objective function, with a much lower relative error.  This may suggest that other types of regularization objectives can be used, other that the Tikhonov-Phillips objective, to solve this problem. 
\begin{figure}[htbp]
	\centering
	\begin{subfigure}{.32\textwidth}
		\centering
		\includegraphics[width=5cm]{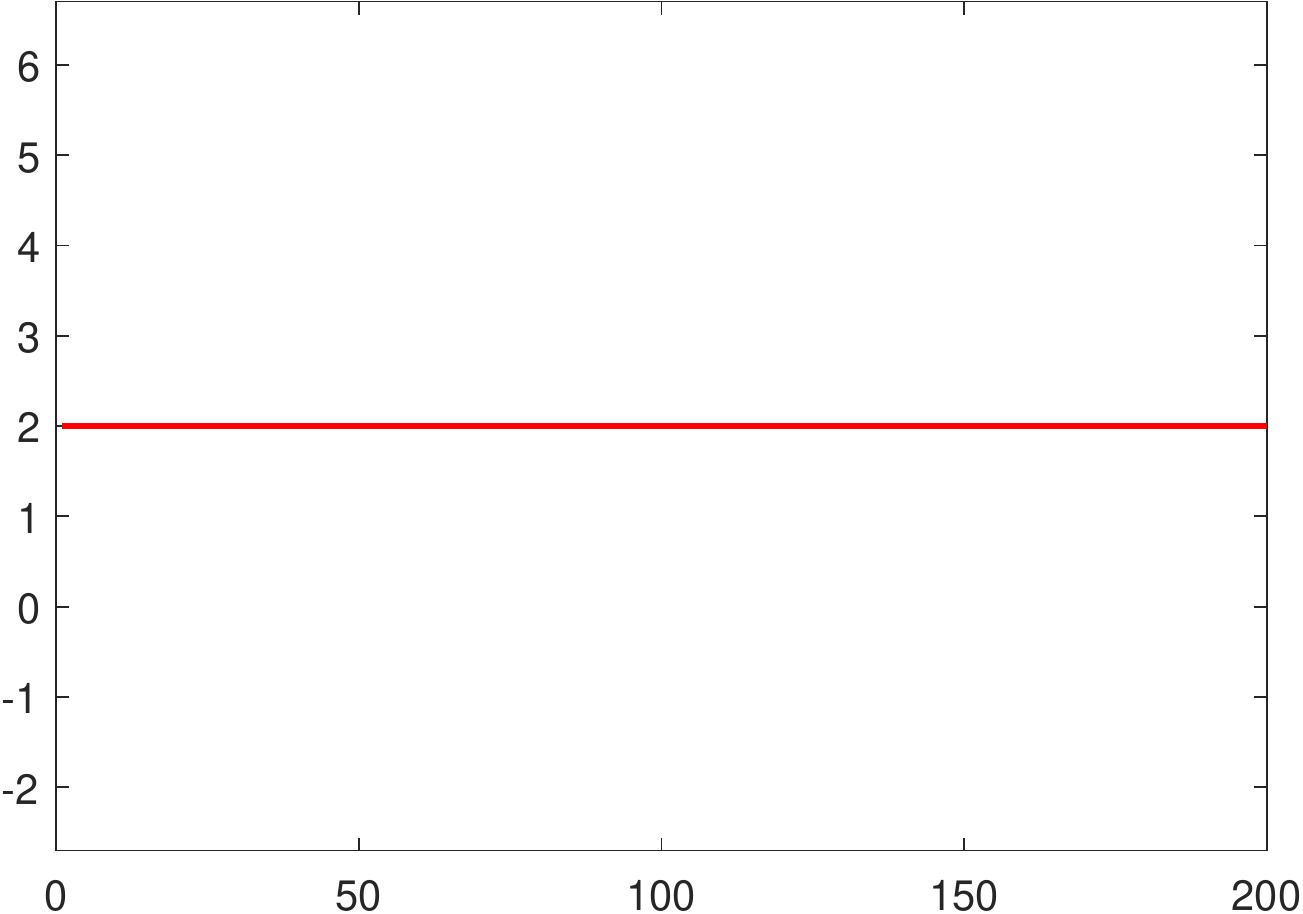}
		\caption{Truth $u^\dagger$.}
	\end{subfigure}%
	\begin{subfigure}{.32\textwidth}
		\centering
		\includegraphics[width=5cm]{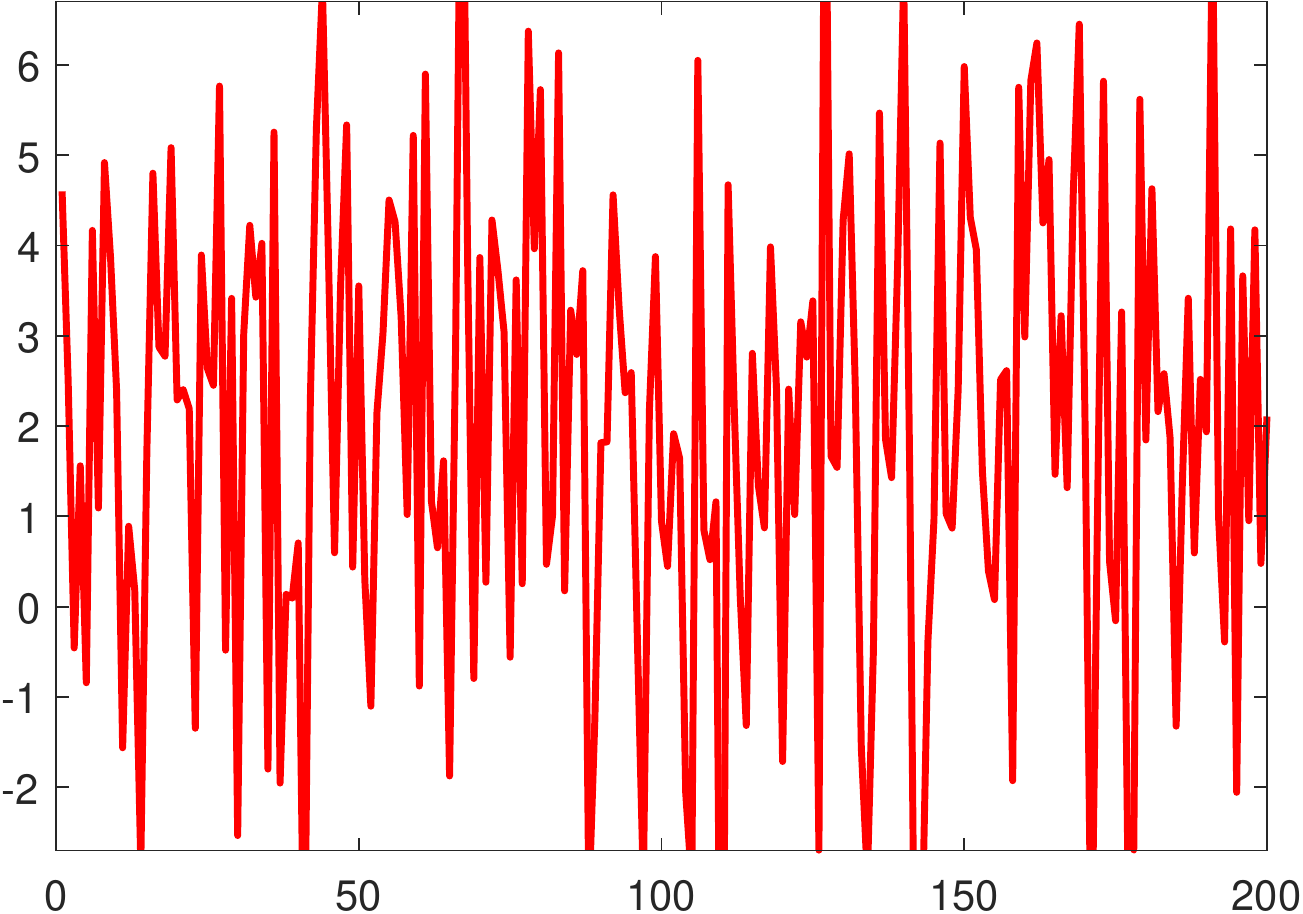}
		\caption{EKI.}
	\end{subfigure}%
	\begin{subfigure}{.32\textwidth}
		\centering
		\includegraphics[width=5cm]{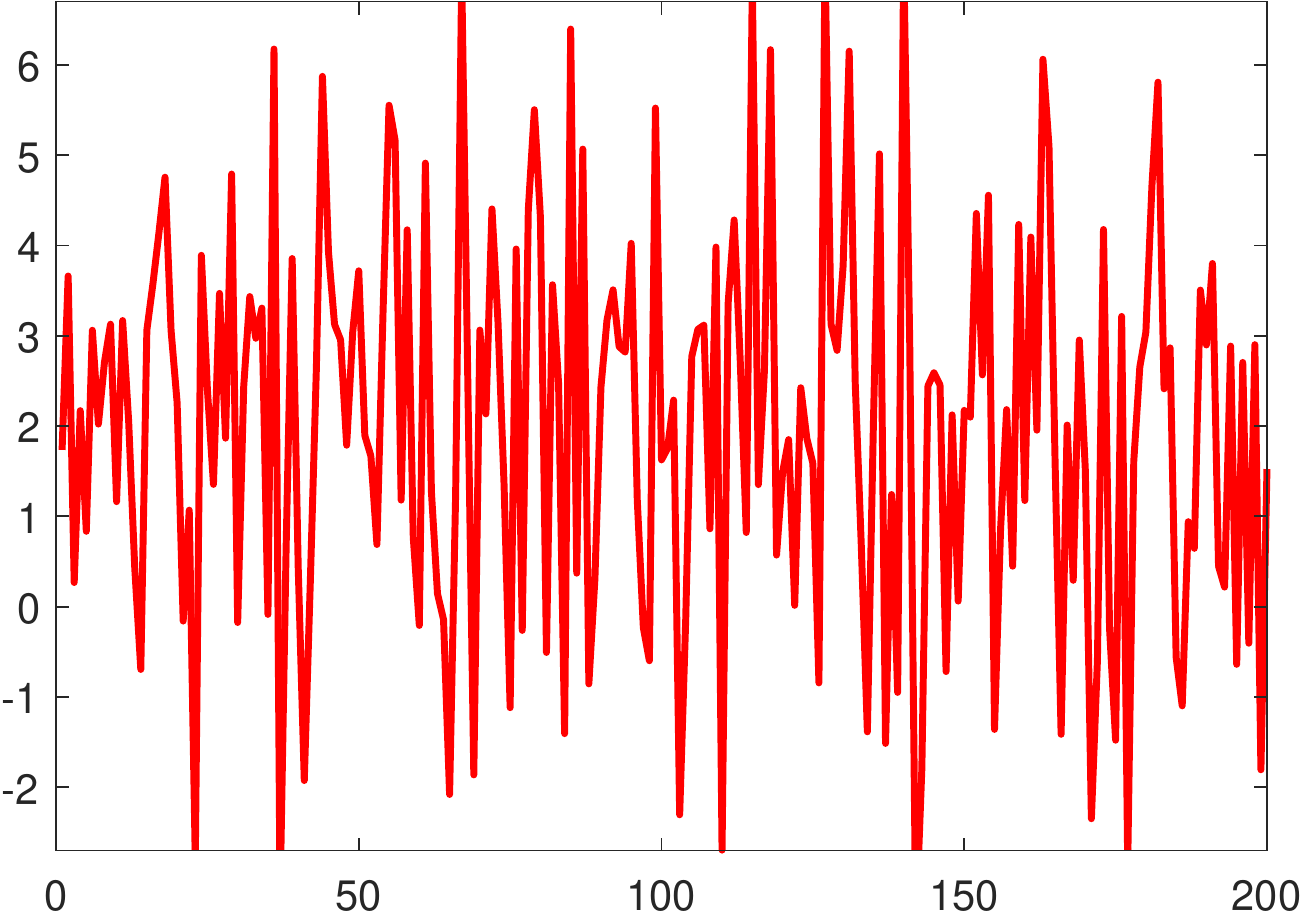}
		\caption{TEKI.}
	\end{subfigure}%
	\vskip\baselineskip
	\begin{subfigure}{.32\textwidth}
		\centering
		\includegraphics[width=5cm]{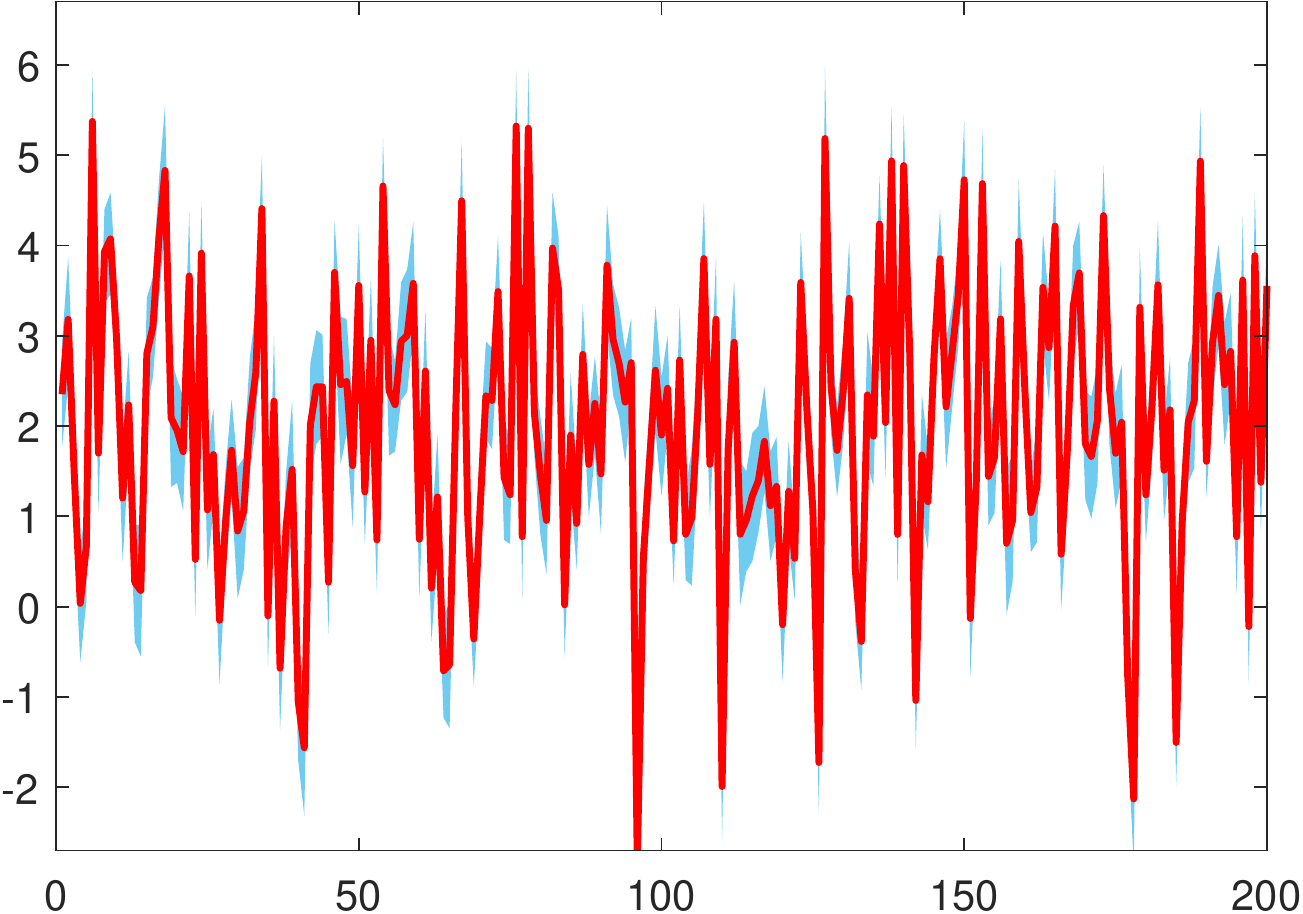}
		\caption{IEKF.}
	\end{subfigure}%
	\begin{subfigure}{.32\textwidth}
		\centering
		\includegraphics[width=5cm]{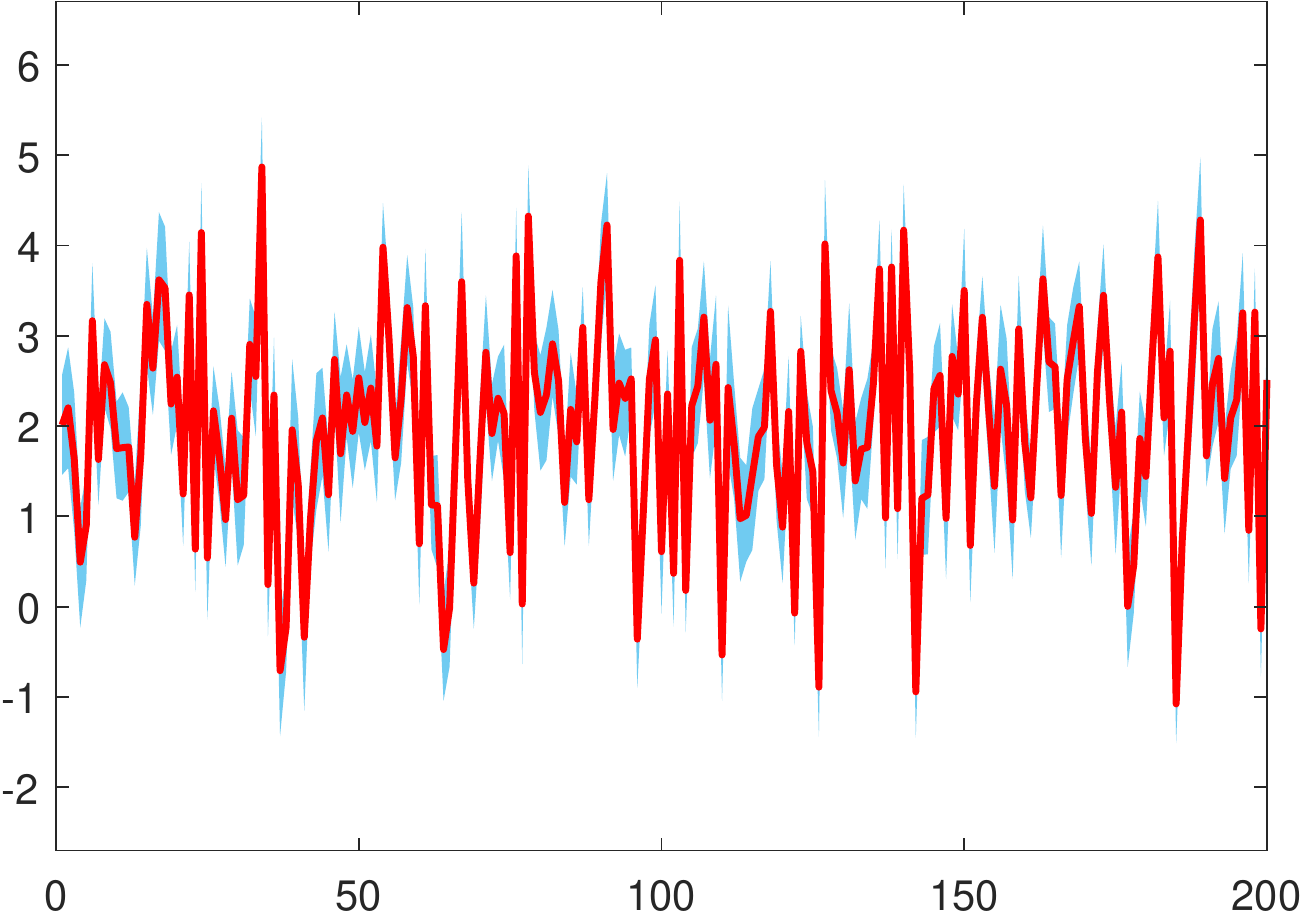}
		\caption{EKI-SL.}
	\end{subfigure}%
	\begin{subfigure}{.32\textwidth}
		\centering
		\includegraphics[width=5cm]{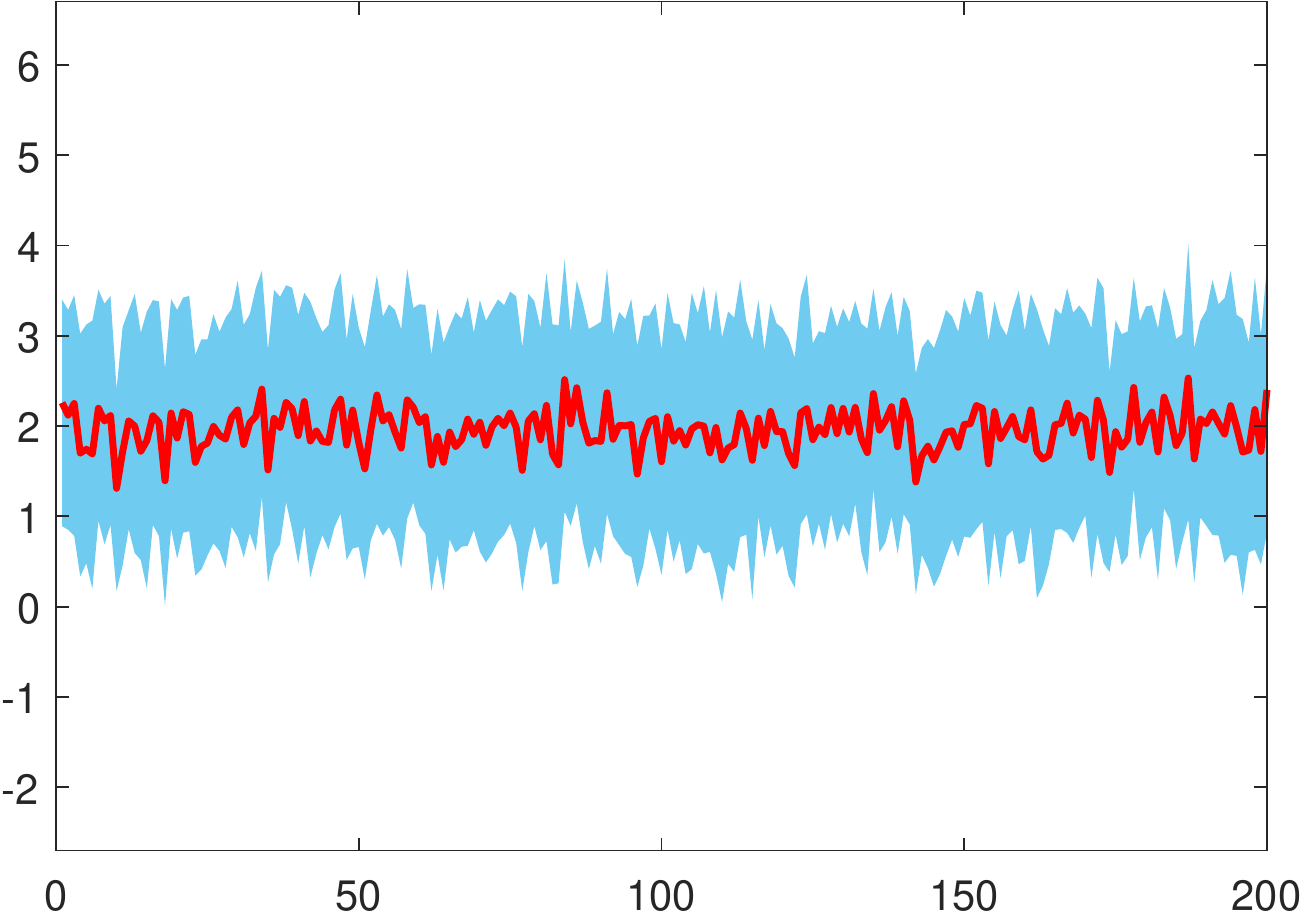}
		\caption{IEKF-SL.}
	\end{subfigure}%
	\caption{Ensemble mean (red) at the final iteration, with 10, 90-quantiles (blue).}
	 		\label{fig:regress_ensem}
\end{figure}

\begin{figure}[htbp]
	\centering
	\begin{subfigure}{.48\textwidth}
		\centering
		\includegraphics[width=7cm]{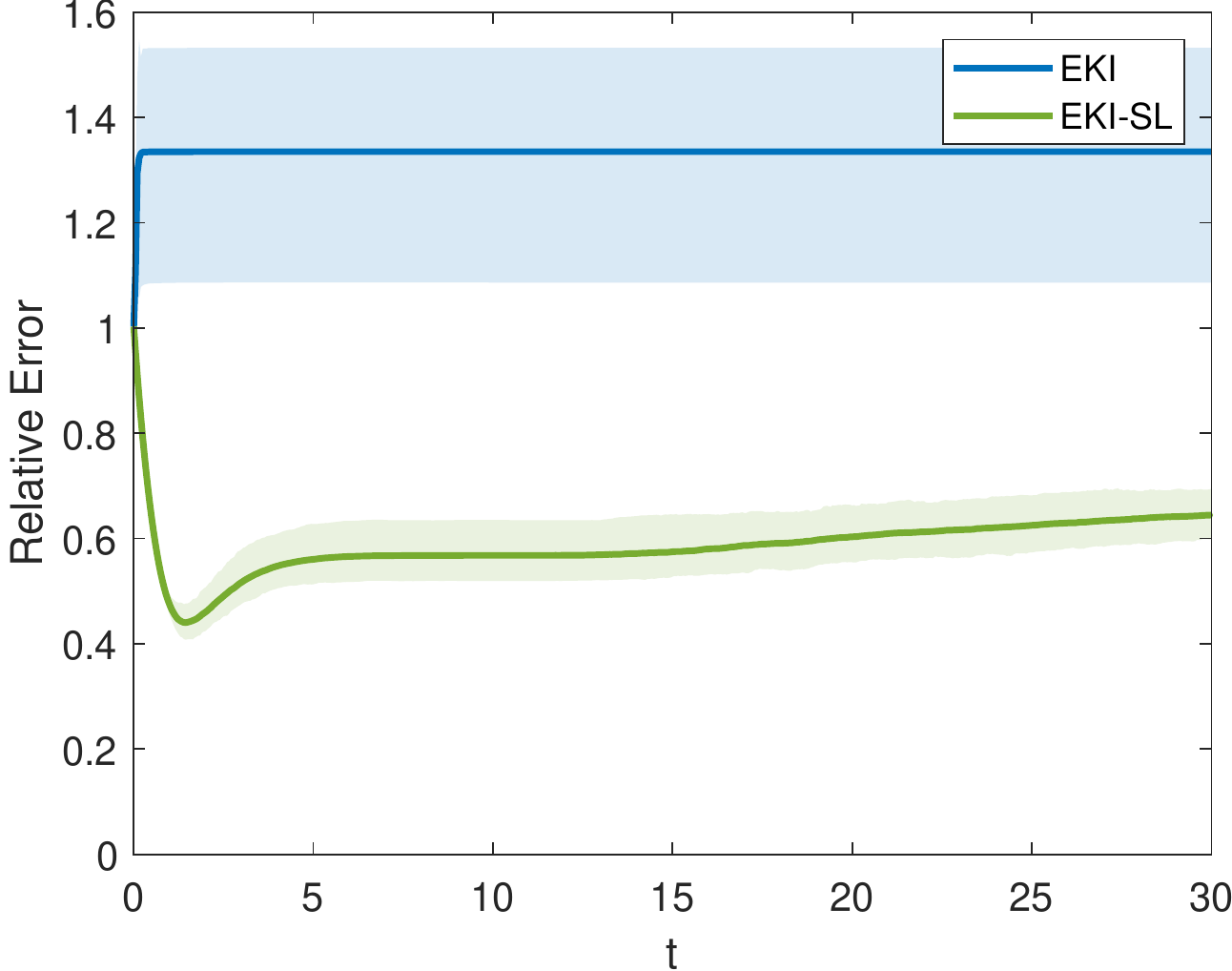}
	\end{subfigure}%
	\begin{subfigure}{.48\textwidth}
		\centering
		\includegraphics[width=7cm]{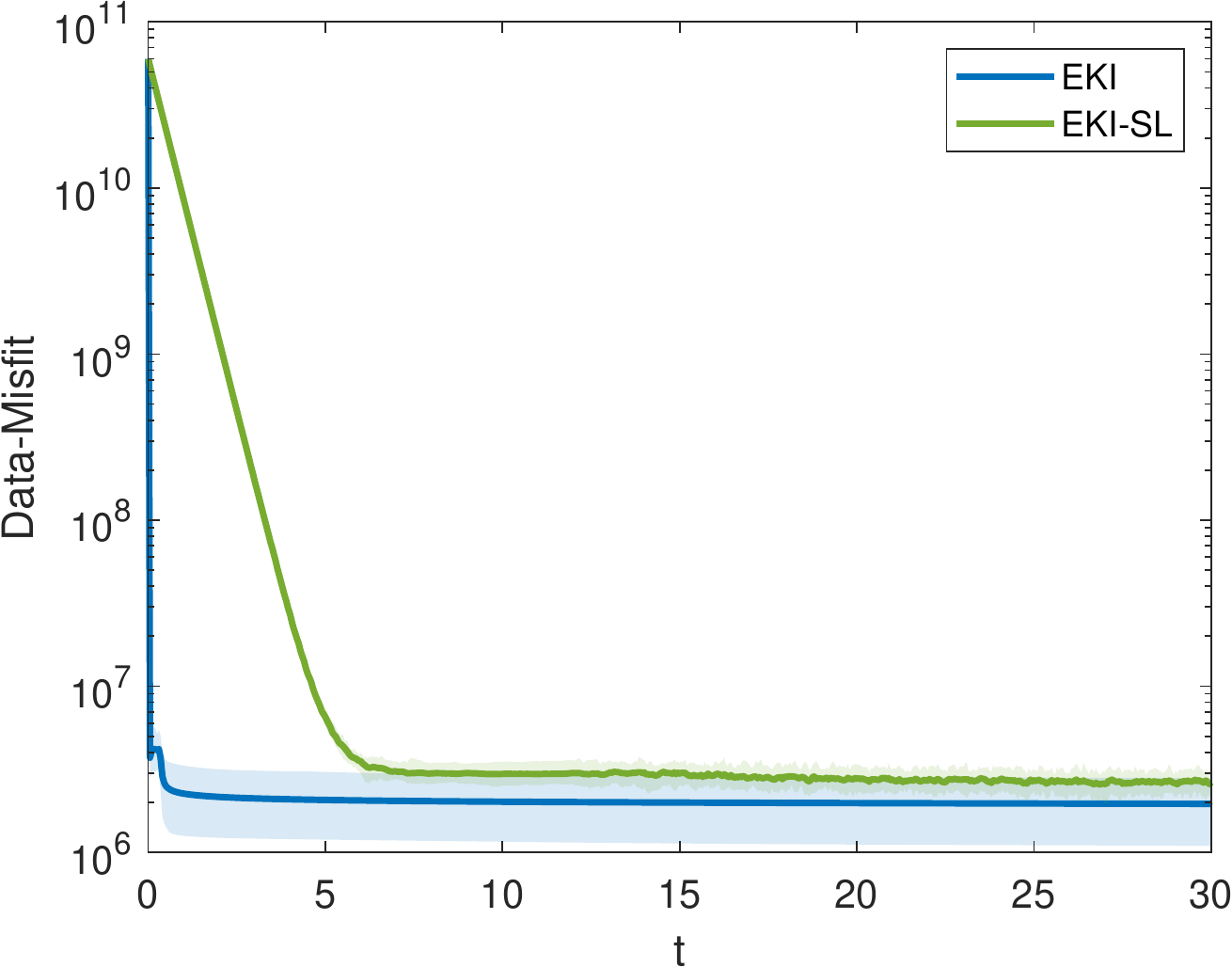}
	\end{subfigure}%
	\caption{EKI \& EKI-SL: Relative errors and data misfit  w.r.t time $t$.}
	 		\label{fig:regress_dm}
\end{figure}

\begin{figure}[ht]
	\centering
	\begin{subfigure}{.48\textwidth}
		\centering
		\includegraphics[width=7cm]{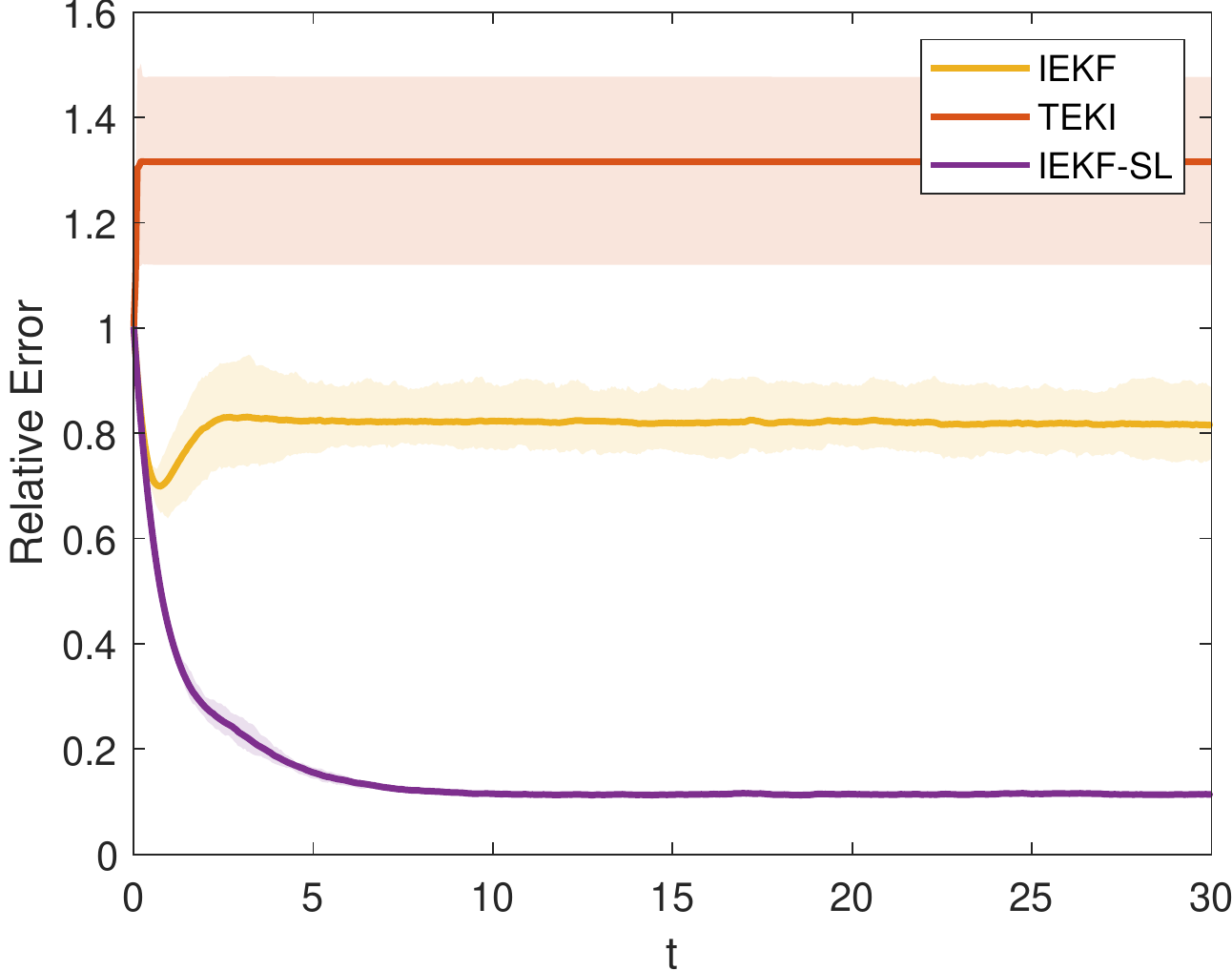}
	\end{subfigure}%
	\begin{subfigure}{.48\textwidth}
		\centering
		\includegraphics[width=7cm]{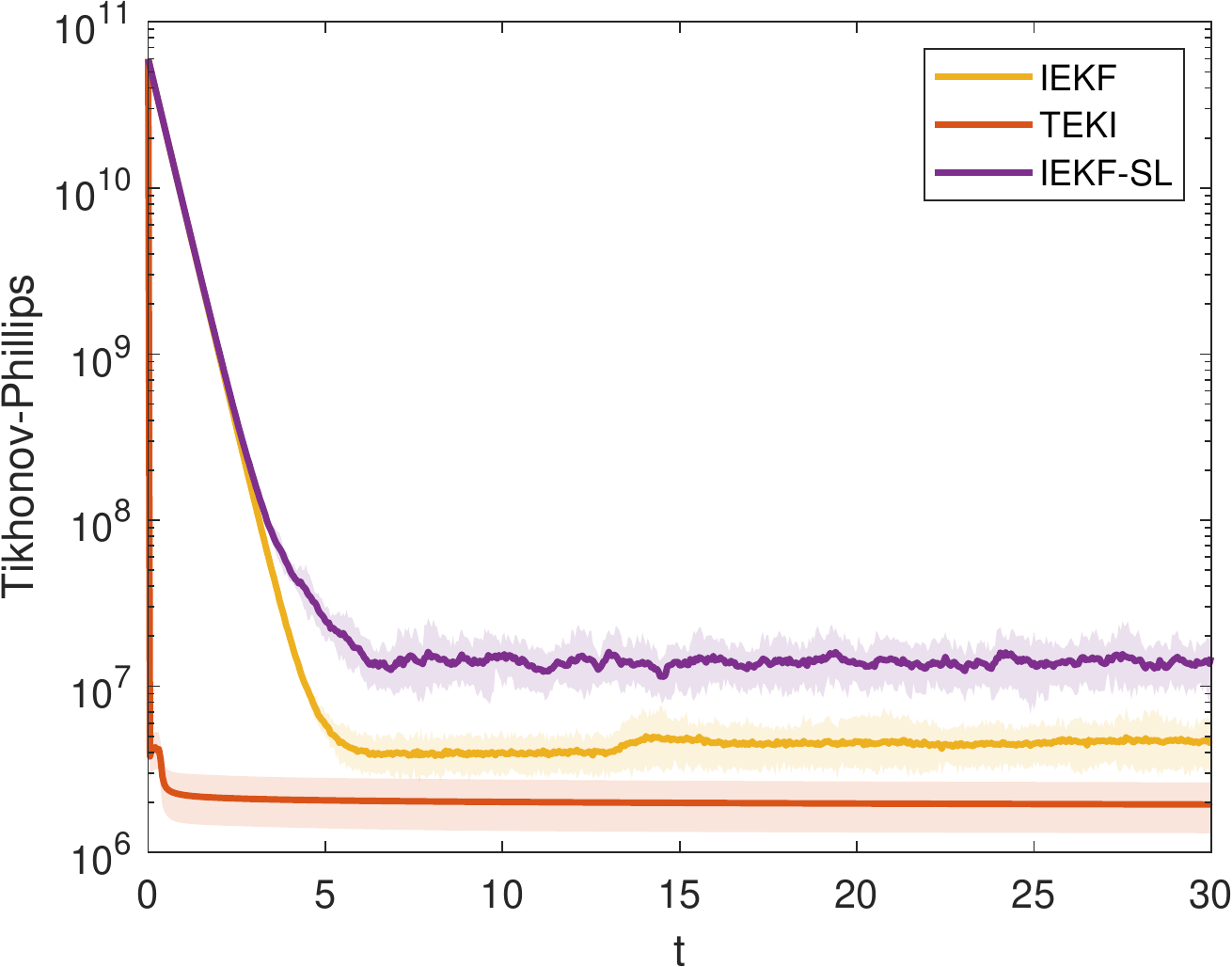}
	\end{subfigure}%
	\caption{IEKF, TEKI \& IEKF-SL: Relative errors and Tikhonov-Phillips objective w.r.t time $t$.}
	 		\label{fig:regress_tp}
\end{figure}

\section{Conclusions and Open Directions}\label{sec:conclusions}
In this paper we have provided a unified perspective of iterative ensemble Kalman methods and introduced some new variants. We hope that our work will stimulate further research in this active area, and we conclude with a list of some open directions.

\begin{itemize}
\item Continuum limits have been formally derived in our work.  The rigorous derivation and analysis of SDE continuum limits, possibly in nonlinear settings, deserves further research. 
\item We have advocated the analysis of continuum limits for the understanding and design of iterative ensemble Kalman methods, but it would also be desirable to develop a framework for the analysis of discrete, implementable algorithms, and to further understand the potential benefits of various discretizations of a given continuum SDE system. 
\item From a theoretical viewpoint, it would be desirable to further analyze the convergence and stability of iterative ensemble methods with small or moderate ensemble size. While mean-field limits can be revealing, in practice the ensemble size is often not sufficiently large to justify the mean-field assumption. It would also be important to  further analyze these questions in mildly nonlinear settings. 
\item An important methodological question, still largely unresolved, is the development of adaptive and easily implementable line search schemes and stopping criteria for ensemble-based optimization schemes. An important work in this direction is \cite{IY20}. 
\item Another avenue for future methodological research is the development of iterative ensemble Kalman methods that are sparse-promoting,  considering alternative regularizations beyond the least-squares objectives considered in our paper \cite{KS19,L20,S20}. 
\item Ensemble methods are cheap in comparison to derivative-based optimization methods and Markov chain Monte Carlo sampling algorithms. It is thus natural to use ensemble methods to build ensemble preconditioners or surrogate models to be used within more expensive but accurate computational approaches. 
\item Finally, a broad area for further work is the application of iterative ensemble Kalman filters to new problems in science and engineering.
\end{itemize}

\section*{Acknowledgments}
NKC is supported by KAUST baseline funding.
DSA is thankful for the support of NSF and NGA through the grant DMS-2027056. 
The work of DSA was also partially supported by the NSF Grant DMS-1912818/1912802.

\bibliographystyle{plain} 

\end{document}